\tikzset{>=angle 60}
\newcommand{\tn}[1]{\textnormal{#1}}
\newcommand{\mc}[1]{\mathcal{#1}}
\newcommand{\N}{\mathbb{N}}
\newcommand{\R}{\mathbb{R}}
\newcommand{\Z}{\mathbb{Z}}
\newcommand{\veps}{\varepsilon}
\newcommand{\RadNik}[2]{\tfrac{\diff #1}{\diff #2}}
\newcommand{\la}{\left \langle}
\newcommand{\ra}{\right \rangle}
\newcommand{\diff}{\tn{d}}
\DeclareMathOperator*{\argmin}{arg min}
\DeclareMathOperator{\ddiv}{div}
\DeclareMathOperator{\Lip}{Lip}
\DeclareMathOperator{\spt}{spt}
\DeclareMathOperator{\KL}{KL}
\theoremstyle{plain}
\newtheorem{theorem}{Theorem}[section]
\newtheorem{lemma}[theorem]{Lemma}
\newtheorem{proposition}[theorem]{Proposition}
\theoremstyle{definition}
\newtheorem{definition}[theorem]{Definition}
\newtheorem{assumption}[theorem]{Assumption}
\newtheorem{remark}[theorem]{Remark}
\newtheorem{example}[theorem]{Example}
\algrenewcommand{\algorithmiccomment}[1]{\hfill$//$ #1}
\numberwithin{equation}{section}
\newcommand{\assign}{:=}
\newcommand{\meas}{\mc{M}}
\newcommand{\prob}{\mc{M}_1}
\newcommand{\energy}{\mc{E}}
\DeclareMathOperator{\ddivBanach}{Div}
\newcommand{\proj}{P}
\newcommand{\sigmaSet}{\mc{S}_{a,b}}
\newcommand{\fullSpace}{\mc{C}_{a,b}}
\newcommand{\dball}[1]{B^d(#1)}
\newcommand{\fullX}{\mathcal{X}}
\newcommand{\normalize}[1]{\mc{N}\left(#1\right)}
\title{Data-driven entropic spatially inhomogeneous evolutionary games}
\author{Mauro Bonafini, Massimo Fornasier, and Bernhard Schmitzer}
\date{\today}
\begin{document}
\maketitle
\begin{abstract}
We introduce novel multi-agent interaction models of entropic spatially inhomogeneous evolutionary undisclosed games and their quasi-static limits. These evolutions vastly generalize
first and second order dynamics. 
Besides the well-posedness of these novel forms of multi-agent interactions, we are concerned with the learnability of individual payoff functions from observation data.
We formulate the payoff learning as a variational problem, minimizing the discrepancy between the observations and the predictions by the payoff function.
The inferred payoff function can then be used to simulate further evolutions, which are fully data-driven.
We prove convergence of minimizing solutions obtained from a finite number of observations to a mean-field limit and the minimal value provides a quantitative error bound on the data-driven evolutions.
The abstract framework is fully constructive and numerically implementable. We illustrate this on computational examples where a ground truth payoff function is known and on examples where this is not the case, including a model for pedestrian movement.
\end{abstract}

Keywords: entropic spatially inhomogeneous evolutionary games, learning payoff functionals, data-driven evolutions
\tableofcontents
\section{Introduction}
\subsection{Multi-agent interaction models in biological, social and economical dynamics}

In the course of the past two decades there has been an explosion of research on models of multi-agent interactions \cite{CuckerDong11,cucker-mordecki,CS,GC04, MR2000132,KeMinAuWan02,vicsek}, to describe phenomena beyond physics, e.g., in biology, such as cell aggregation and motility \cite{CDFSTB03,kese70,KocWhi98,be07}, coordinated animal motion \cite{BCCCCGLOPPVZ09,MR2507454,ChuDorMarBerCha07,crpito10,CouFra02,CKFL05,CS,Niw94,PE99,ParVisGru02,Rom96,TonTu95,YEECBKMS09}, coordinated human \cite{crpito11,CucSmaZho04,MR2438215} and synthetic agent behaviour and interactions, such as 
cooperative robots \cite{ChuHuaDorBer07,LeoFio01,PerGomElo09,SugSan97}. We refer to \cite{CCH13,CCP16,cafotove10,viza12} for recent surveys.

Two main mechanisms are considered in such models to define the dynamics.
The first takes inspiration from physical laws of motion and is based on pairwise forces encoding observed ``first principles'' 
of biological, social, or economical interactions, e.g., repulsion-attraction, alignment, self-propulsion/friction et cetera.
Typically this leads to Newton-like first or second order equations with ``social interaction'' forces, see \eqref{eq:NewtonModel} below.
In the second mechanism the dynamics are driven by an evolutive game where players simultaneously optimize their cost. Examples are game theoretic models of evolution \cite{zbMATH01169593} or 
mean-field games, introduced in \cite{lali07} and independently under the name
Nash Certainty Equivalence (NCE) in \cite{HCM03}, later greatly popularized, e.g., within consensus problems, for instance in \cite{NCM10,NCM11}.

More recently the notion of spatially inhomogeneous evolutionary games has been proposed \cite{almi2020alternate,AmFoMoSa2018,doi:10.1137/19M1273426} where the transport field for the agent population is directed by an evolutionary game on their available strategies.
Unlike mean-field games, the optimal dynamics are not chosen via an underlying non-local optimal control problem but by the agents' local (in time and space) decisions, see \eqref{eq:GameModel} below.

One fundamental goal of these studies is to reveal the relationship between the simple pairwise forces or incentives acting at the individual level and the emergence of global patterns in the behaviour of a given population.

\paragraph{Newtonian models.}

A common simple class of models for interacting multi-agent dynamics with pairwise interactions is inspired by Newtonian mechanics. The evolution of $N$ agents with time-dependent locations $x_1(t),\ldots,x_N(t)$ in $\R^d$ is described by the ODE system
\begin{align}
	\label{eq:NewtonModel}
	\partial_t x_i(t) = \frac{1}{N} \sum_{j=1}^N f(x_i(t),x_j(t))
	\qquad \tn{for } i=1,\ldots,N,
\end{align}
where $f$ is a pre-determined pairwise interaction force between pairs of agents.
The system is well-defined for sufficiently regular $f$ (e.g.~for $f$ Lipschitz continuous).
In this article, we will refer to such models as \emph{Newtonian models}.

First order models of the form \eqref{eq:NewtonModel} are ubiquitous in the literature and have, for instance, been used to model multiagent interactions in opinion formation \cite{kr00,hekr02}, vehicular traffic flow \cite{garavello2006traffic}, pedestrian motion \cite{crpito14}, and synchronisation of chemical and biological oscillators in neuroscience \cite{kuramoto2003chemical}.

Often one is interested in studying the behaviour of a very large number of agents. We may think of the agents at time $t$ to be distributed according to a probability measure $\mu(t)$ over $\R^d$. The limit-dynamics of \eqref{eq:NewtonModel} as $N \to \infty$ can under suitable conditions be expressed directly in terms of the evolution of the distribution $\mu(t)$ according to a \emph{mean-field equation}.
The mean-field limit of \eqref{eq:NewtonModel} is formally given by
\begin{align}
	\label{eq:NewtonModelMeanField}
	\partial_t \mu(t) + \ddiv\big( v(\mu(t)) \cdot \mu(t) \big) = 0
	\qquad \tn{where} \qquad
	v(\mu(t))(x) \assign \int_{\R^d} f(x,x')\,\diff \mu(t)(x').
\end{align}
Here $v(\mu(t))$ is a velocity field and intuitively $v(\mu(t))(x)$ gives the velocity of infinitesimal mass particles of $\mu$ at time $t$ and location $x$ \cite{Carrillo2011, Carrillo2014}.

While strikingly simple, such models only exhibit limited modeling capabilities.
For instance, the resulting velocity $\partial_t x_i(t)$ is simply a linear combination of the influences of all the other agents. `Importance' and `magnitude' of these influences cannot be specified separately: agent $j$ cannot tell agent $i$ to remain motionless, regardless of what other agents suggest.
Generally, agent $i$ has no sophisticated mechanism of finding a `compromise' between various potentially conflicting influences and merely uses their linear average.
The applicability of such models to economics or sociology raises concerns as it is questionable whether a set of static interaction forces can describe the behaviour of rational agents who are able to anticipate and counteract undesirable situations.

\paragraph{Spatially inhomogeneous evolutionary games.}
In a game dynamic the vector field $v(\mu(t))$ is not induced by a rigid force law but by the optimization of an individual payoff by each agent. In mean-field games this optimization is global in time, each agent plans their whole trajectory in advance, optimization can be thought of as taking place over repetitions of the same game (e.g.~the daily commute).
Alternatively, in spatially inhomogeneous evolutionary games \cite{AmFoMoSa2018} the agents continuously update their mixed strategies in a process that is local in time, which may be more realistic in some scenarios. This is implemented by the well-known replicator dynamics \cite{zbMATH01169593}.

As above, agents may move in $\R^d$. Let $U$ be the set of pure strategies. The map $e : \R^d \times U \to \R^d$ describes the spatial velocity $e(x,u)$ for an agent at position $x \in \R^d$ with a pure strategy $u \in U$.
For example, one can pick $U \subset \R^d$ to be a set of admissible velocity vectors and simply set $e(x,u) = u$.
$e$ acts as dictionary between strategies and velocities and can therefore assumed to be known. $e(x,u)$ may be more complicated, for instance, when certain velocities are inadmissible at certain locations due to obstacles.
A function $J:(\mathbb R^d\times U)^2 \to \mathbb R$ describes the individual benefit $J(x,u,x',u')$ for an agent at $x$ for choosing strategy $u$ when another agent sits at $x'$ and intends to choose strategy $u'$.
For example, $J(x,u,x',u')$ may be high when $e(x,u)$ points in the direction that $x$ wants to move, but could be lowered when the other agent's velocity $e(x',u')$ suggests an impending collision.
The state of each agent is given by their spatial position $x \in \R^d$ and a mixed strategy $\sigma \in \prob(U)$ (where $\prob(U)$ denotes probability measures over $U$). As hinted at, the evolution of $\sigma$ is driven by a replicator dynamic involving the payoff function $J$, averaged over the benefit obtained from all other agents, the spatial velocity $\partial_t x$ is obtained by averaging $\int_U e(x,u)\,\diff \sigma(u)$. The equations of motion are given by
\begin{subequations}
\label{eq:GameModel}
\begin{align}
	\partial_t x_i(t) & = \int_U e(x_i(t),u)\,\diff \sigma_i(t)(u), \label{eq:motion}\\
	\partial_t \sigma_i(t) & = \frac{1}{N} \sum_{j=1}^N f^J\big(x_i(t),\sigma_i(t),x_j(t),\sigma_j(t)\big) \label{eq:replicator}
\end{align}
\end{subequations}
where for $x,x' \in \R^d$ and $\sigma,\sigma' \in \prob(U)$
\begin{align}
f^J\big(x,\sigma,x',\sigma'\big)
		 \assign \left[ 
			\int_U J(x,\cdot,x',u')\,\diff \sigma'(u') - \int_U \int_U J(x,v,x',u')\,\diff \sigma'(u')\,\diff \sigma(v) \right] \cdot \sigma. \label{eq:rateJ}
\end{align}
Intuitively, the strategy $\sigma_i$ tends to concentrate on the strategies with the highest benefit for agent $i$ via \eqref{eq:rateJ}, which will then determine their movement via \eqref{eq:motion}.

Equation \eqref{eq:motion} describes the motion of agents in $\mathbb R^d$ and resembles \eqref{eq:NewtonModel}. The main difference is that the vector field is determined by the resulting solution of the replicator equation \eqref{eq:replicator}, which promotes strategies that perform best with respect to the individual payoff $J$ with rate \eqref{eq:rateJ}. Despite the different nature of the variables $(x_i,\sigma_i)$ of the model, the system \eqref{eq:GameModel} can also be re-interpreted as a Newtonian-like dynamics on the space $\R^d \times \prob(U)$ where each agent $y_i(t)=(x_i(t),\sigma_i(t))$ is characterized by its spatial location $x_i(t) \in \R^d$ and its mixed strategy $\sigma_i(t) \in \prob(U)$.
Accordingly, equations \eqref{eq:GameModel} can be more concisely expressed as
\begin{align}
	\label{eq:GameModelNewton}
	\partial_t y_i(t) = \frac{1}{N} \sum_{j=1}^N f\big(y_i(t),y_j(t)\big)
	\quad \tn{where} \quad
	f((x,\sigma),(x',\sigma')) \assign \left( {\textstyle \int_U e(x,u)\,\diff \sigma(u)}, f^J(x,\sigma,x',\sigma') \right).
\end{align}

Similarly to mean-field limit results in \cite{Carrillo2011, Carrillo2014}, the main contribution of \cite{AmFoMoSa2018} is to show that the large particle limit for $N\to \infty$ of solutions of \eqref{eq:GameModel} converges in the sense of probability measures to $\Sigma(t) \in \prob(\R^d \times \prob(U))$, which is, in analogy to \eqref{eq:NewtonModelMeanField}, solution of a nonlinear transport equation of the type
\begin{align}
	\label{eq:GameModelMeanField}
	\partial_t \Sigma(t) + \ddivBanach\big( v(\Sigma(t)) \cdot \Sigma(t) \big) = 0
	\quad \tn{where} \quad	
	v(\Sigma(t))(y) \assign \int f(y,y')\,\diff \Sigma(t)(y').
\end{align}

Note that \eqref{eq:GameModelMeanField} is a partial differential equation having as domain a (possibly infinite-dimensional) Banach space containing $\R^d \times \prob(U)$ and particular care must be applied to the use of an appropriate calculus needed to define differentials, in particular for the divergence operator.
In \cite{AmFoMoSa2018} Lagrangian and Eulerian notions of solutions to \eqref{eq:GameModelMeanField} are introduced.
As the technical details underlying the well-posedness of \eqref{eq:GameModelMeanField} do not play a central role in this article we refer to \cite{AmFoMoSa2018} for more insights. Nevertheless, we apply some of the general statements in \cite{AmFoMoSa2018} to establish well-posedness of the models presented in this paper.

\subsection{Learning or inference of multi-agent interaction models}
An important challenge is to determine the precise form of the interaction force or payoff functions.
While physical laws can often be determined with high precision through experiments, such experiments are often either not possible, or the models are not as precisely determined in more complex systems from biology or social sciences.
Therefore, very often the governing rules are simply chosen ad hoc to reproduce at some major qualitative effects observed in reality.

Alternatively one can employ model selection and parameter estimation methods to determine the form of the governing functions. Data-driven estimations have been applied in continuum mechanics~\cite{KIRCHDOERFER201681,MR3799091} computational sociology~\cite{BFHM16,Lu14424,lu2019learning,ZHONG2020132542} or economics~\cite{DDvolatility,cre03,Egger_2005}.
However, even the problem of determining whether time shots of a linear dynamical system do fulfil physically meaningful models, in particular have Markovian dynamics, is computationally intractable~\cite{PhysRevLett.108.120503}. 
For nonlinear models, the intractability of learning the system corresponds to the complexity of determining the set of appropriate candidate functions to fit the data. In order to break the curse of dimensionality, one requires prior knowledge on the system and the potential structure of the governing equations.
For instance, in the sequence of recent papers~\cite{trwa16,sctrwa17-1,sctrwa17} the authors assume that the governing equations are of first order and can be written as sparse polynomials, i.e., linear combinations of few monomial terms.

In this work we present an approach for estimating the payoff function for spatially inhomogeneous evolutionary games from observed data. It is inspired by the papers \cite{BFHM16,Lu14424,lu2019learning,ZHONG2020132542}, in particular by the groundbreaking paper \cite{BFHM16} which is dedicated to data-driven evolutions of Newtonian models.
In these references the curse of dimensionality is remedied by assuming that the interaction function $f$ in \eqref{eq:NewtonModel} is parametrized by a lower-dimensional function $a$, the identification of which is more computationally tractable. A typical example for such a parametrization is given by functions $f=f^a$ of the type
\begin{align*}
	f^a(x,x') = a(|x-x'|) \cdot (x'-x)
	\qquad \tn{for some} \qquad
	a: \R \to \R.
\end{align*}
The corresponding model \eqref{eq:NewtonModel} is used, for instance, in opinion dynamics \cite{kr00,hekr02}, vehicular traffic \cite{garavello2006traffic} or pedestrian motion \cite{crpito14}.
The learning or inference problem is then about the determination of $a$, hence of $f=f^a$, from observations of real-life realizations of the model \eqref{eq:NewtonModel}.
Clearly the problem can be formulated as an optimal control problem. However, as pointed out in \cite{BFHM16}, in view of the nonlinear nature of the function mapping $a$ in the corresponding solution $x^{[a]}(t)$ of \eqref{eq:NewtonModel}, the control cost would be nonconvex and the optimal control problem rather difficult to solve.
Instead, in \cite{BFHM16} a convex formulation is obtained by considering an empirical risk minimization in least squares form: 
given an observed realization $(x_1^{N}(t),\dots,x_N^{N}(t))$ of the dynamics between $N$ agents, generated by \eqref{eq:NewtonModel} governed by $f^a$, we aim at identifying $a$ by minimizing
\begin{align}
	\label{eq:NewtonEnergy}
	\energy^{N}(\hat{a}) & \assign \frac{1}{T} \int_0^T \left[
		\frac{1}{N} \sum_{i=1}^N \left\|\partial_t x_i^{N}(t)- \frac{1}{N} \sum_{j=1}^N f^{\hat{a}}\big(x_i^{N}(t),x_j^{N}(t)\big) \right\|^2
			\right] \diff t.
\end{align}
That is, along the observed trajectories we aim to minimize the discrepancy between observed velocities and those predicted by the model.
The functional $\energy^{N}$ plays the role of a {\it loss function} in the learning or inference task.
A time-discrete formulation in the framework of statistical learning theory has been proposed also in \cite{Lu14424,lu2019learning,ZHONG2020132542}. Under the assumption that $a$ belongs to a suitable compact class $X$ of smooth functions, the main results in \cite{BFHM16,Lu14424,lu2019learning,ZHONG2020132542} establish that minimizers
$$
\hat a_N \in \arg\min_{\hat a \in V_N}\energy^{N}(\hat{a}),
$$
converge to $a$ in suitable senses for $N\to \infty$, where the ansatz spaces $V_N \subset X$ are suitable finite-dimensional spaces of smooth functions (such as finite element spaces).

\subsection{Contribution and outline}
The main scope of this paper is to provide a theoretical and practical framework to perform learning/inference of spatially inhomogeneous evolutionary games, so that these models could be used in real-life data-driven applications. 
First, we discuss some potential issues with model \eqref{eq:GameModel} and provide some adaptations. We further propose and study in Section \ref{sec:Inference} several learning functionals for inferring the payoff function $J$ from the observation of the dynamics, i.e.~extending the approach of \cite{BFHM16} to our modified version of \cite{AmFoMoSa2018}. Let us detail our contributions.

\medskip\noindent
The proposed changes to the model \eqref{eq:GameModel} are as follows:
\begin{itemize}
	\item In Section \ref{sec:ModelEntropic}, we add entropic regularization to the dynamics for the mixed strategies of the agents. This avoids degeneracy of the strategies and allows faster reactions to changes in the environment, see Figure \ref{fig:BasicModelComparison}. Entropic regularization of games was also considered in \cite{FlamEntropicGames2005}. We show that the adapted model is well-posed and has a consistent mean-field limit (Theorem \ref{thm:wellPosedEntropic}).
	\item For interacting agents the assumption that the mixed strategies of other agents are fully known may often be unrealistic. Therefore, we will focus our analysis on the case where $J(x,u,x',u')$ does not explicitly depend on $u'$, the `other' agent's pure strategy. We refer to this as the \emph{undisclosed} setting. (The general case is still considered to some extent.)
	
	For this undisclosed setting, we study the quasi-static \emph{fast-reaction} limit of \eqref{eq:GameModel} where the dynamics of mixed strategies $(\sigma_i)_{i=1}^N$ run at much faster time-scale than dynamics of locations $(x_i)_{i=1}^N$, i.e.,~agents quickly adjust their strategies to changes in the environment. This will also be important for designing practical inference functionals (see below). The \emph{undisclosed fast-reaction} model is introduced in Section \ref{sec:ModelNTFR}. Well-posedness and consistent mean-field limit are proved by Theorem \ref{thm:wellPosedFastReaction}, convergence to the fast-reaction limit or quasi-static evolution as the strategy time scale becomes faster is given by Theorem \ref{thm:LambdaQuasiStaticConvergence}.
\end{itemize}
We claim that the resulting undisclosed fast-reaction entropic model is a useful alternative of the Newtonian model \eqref{eq:NewtonModel} and support these considerations with theoretical and numerical examples.
In particular, we show that any Newtonian model can be described (approximately) as an undisclosed fast-reaction entropic model whereas the converse it not true (Examples \ref{ex:FastReactionVsNewton} and \ref{ex:FastReactionVsNewtonII}).

We then discuss several inference functionals for the modified game model in Section \ref{sec:Inference}. We start with a rather direct analogue of \eqref{eq:NewtonEnergy} (Section \ref{sec:InferenceDifferential}), which would require not only the observation of the spatial locations $(x_i)_{i=1}^N$ and velocities $(\partial_t x_i)_{i=1}^N$, but also of the mixed strategies $(\sigma_i)_{i=1}^N$, and their temporal derivatives $(\partial_t \sigma_i)_{i=1}^N$. Whether the latter two can be observed in practice is doubtful, in particular with sufficient accuracy. Therefore, as an alternative, we propose two inference functionals for the undisclosed fast-reaction setting. A functional based on observed spatial velocities is given in Section \ref{sec:InferenceNTFRVel}. A functional based on mixed strategies (but not their derivatives) is proposed in Section \ref{sec:InferenceNTFRSigma} and we discuss some options how the required data could be obtained in practice.
In Section \ref{sec:InferenceNTFRTheory} some properties of these functionals are established, such as existence of minimizers and consistency of their mean-field versions.

Numerical examples are given in Section \ref{sec:Numerics}. These include the inference on examples where observations were generated with a true underlying undisclosed fast-reaction entropic model, as well as inference of Newtonian models and of a model for pedestrian motion adapted from \cite{bailo2018pedestrian, Degond2013}. We close with a brief discussion in Section \ref{sec:Conclusion}. Some longer technical proofs have been moved to Appendix \ref{sec:Analysis}.
Before presenting the new model, we collect some notation in the next subsection.

\subsection{Setting and notation}
\label{sec:Notation}
\paragraph{General setting.} Let $(Y,d_Y)$ be a complete and separable metric space. We denote by $\meas(Y)$ the space of signed Borel measures with finite total variation and by $\prob(Y)$ the set of probability measures with bounded first moment, i.e.,
\[
\prob(Y) = \left\{ \mu \in \meas(Y) \mid \mu\geq 0, \mu(Y) = 1, \int_Y d_Y(y,\bar{y})\,\diff\mu(y) < \infty \text{ for some }\bar{y} \in Y \right\}.
\]
For a continuous function $\varphi \in C(Y)$ we denote by
\[
\Lip(\varphi) \assign \sup_{\substack{x,y \in Y\\x\neq y}} \frac{|\varphi(x)-\varphi(y)|}{d_Y(x,y)}
\]
its Lipschitz constant. The set of bounded Lipschitz functions is then denoted by $\Lip_b(Y) = \{ \varphi \in C(Y) \mid \|\varphi\|_\infty + \Lip(\varphi) < \infty \}$, with $\|\cdot\|_\infty$ the classical sup-norm.

\smallskip
For $\mu_1,\mu_2 \in \prob(Y)$, the $1$-Wasserstein distance $W_1(\mu_1,\mu_2)$ is defined by
\begin{equation}\label{eq:W1primal}
W_1(\mu_1,\mu_2) \assign \inf \left\{ \int_{Y\times Y} d_Y(y_1,y_2) \diff\gamma(y_1,y_2) \mid \gamma \in \Gamma(\mu_1,\mu_2) \right\}
\end{equation}
where $\Gamma(\mu_1,\mu_2)$ is the set of admissible coupling between $\mu_1$ and $\mu_2$, i.e., $\Gamma(\mu_1,\mu_2) = \{ \gamma \in \prob(Y \times Y) \mid \gamma(A \times Y) = \mu_1(A) \text{ and } \gamma(Y \times B) = \mu_2(B)\}$. Due to Kantorovitch duality, one can also consider the equivalent definition
\begin{equation}\label{eq:W1dual}
W_1(\mu_1,\mu_2) \assign \sup \left\{ \int_{Y} \varphi\,\diff(\mu_1-\mu_2) \mid \varphi \in \Lip_b(Y), \Lip(\varphi) \leq 1 \right\}
\end{equation}
The metric space $(\prob(Y), W_1)$ is complete because $(Y, d_Y)$ is complete, and a sequence $(\mu_n)_{n \in \mathbb{N}}\allowbreak \subset \prob(Y)$ converges to $\mu \in \prob(Y)$ with respect to the distance $W_1$ if and only if $\mu_n$ converges to $\mu$ in duality with bounded Lipschitz functions and the first moments converge, i.e., $\int_{Y} d_Y(\cdot,\bar{y})\,\diff\mu_n \to \int_{Y} d_Y(\cdot,\bar{y})\,\diff\mu$ for all $\bar{y} \in Y$.

\paragraph{Interaction setting.}
We fix the space of pure strategies $U$ to be a compact metric space with distance $\textup{d}_U$. Each agent's mixed strategy is then described by $\sigma \in \prob(U)$. Agents move in $\R^d$, and we denote with $\|\cdot\|$ the usual Euclidean norm. For $R > 0$, $\dball{R}$ is the open ball of radius $R$ and centre the origin in $\R^d$. For $N \in \mathbb{N}$ and $\mathbf{x} = (x_1,\dots,x_N) \in [\R^d]^N$ we introduce the scaled norm $\|\cdot\|_N$ defined as
\[
\|\mathbf{x}\|_N = \frac1N \sum_{i=1}^N \|x_i\|.
\]
For $\sigma_1,\sigma_2 \in \prob(U)$ we set the Kullback--Leibler divergence to be
\[
\KL(\sigma_1|\sigma_2) \assign \begin{cases}
\int_U \log \left( \frac{\diff\sigma_1}{\diff\sigma_2} \right)\,\diff\sigma_1 & \tn{if } \sigma_1 \ll \sigma_2, \\
+ \infty & \tn{else,}
\end{cases} 
\]
where $\diff\sigma_1/\diff\sigma_2$ is the Radon--Nikodym derivative of $\sigma_1$ with respect to $\sigma_2$.

\smallskip
Throughout the paper we assume $e \in \Lip_b(\R^d \times U; \R^d)$ to be fixed and known. Such a function maps each pure strategy $u \in U$ into a given velocity $e(\cdot,u) \in \R^d$. We may think of $e$ as a label or dictionary for strategies.
The sets of admissible interaction rules $J$ are described by
\[
\fullX = \Lip_b(\R^d \times U \times \R^d \times U) \quad \text{and} \quad X = \Lip_b(\R^d \times U \times \R^d).
\]
Here, $\fullX$ consists of all possible payoff functions, modeling the case where each agent has a complete knowledge of both positions and mixed strategies of the other agents. On the other hand, in what we call the \emph{undisclosed} setting, each agent only knows the physical locations of the others, and thus in this context the payoff function $J$ will no longer depend on the other's strategy, making it possible to restrict the analysis to $X$.

\section{From spatially inhomogeneous evolutionary games to undisclosed fast-reaction dynamics}
\label{sec:Model}
We describe in this section how we adapt the spatially inhomogeneous evolutionary games model \eqref{eq:GameModel} of \cite{AmFoMoSa2018} to make it more suitable in practice for the task of modeling and inferring interaction rules between rational agents.
In Section \ref{sec:ModelEntropic} we add entropic regularization to avoid degeneracy of the agents' mixed strategies.
In Section \ref{sec:ModelNTFR} we focus on the more realistic undisclosed setting, where agents are not aware of other agents' strategy choices and derive a fast-reaction limit, describing the regime where choice of the strategies happens at a much faster time scale than physical movement of the agents.

\subsection{Entropic regularization for spatially inhomogeneous evolutionary games}
\label{sec:ModelEntropic}
In the model \eqref{eq:GameModel}, mixed strategies $\sigma_i$ are prone to converging exponentially to very singular distributions so that agents cannot react quickly to changes in the environment and exhibit strong inertia, contradicting the notion of rational agents. This behaviour can be illustrated with a simple example.

\begin{figure}[tb]
	\centering
	\includegraphics{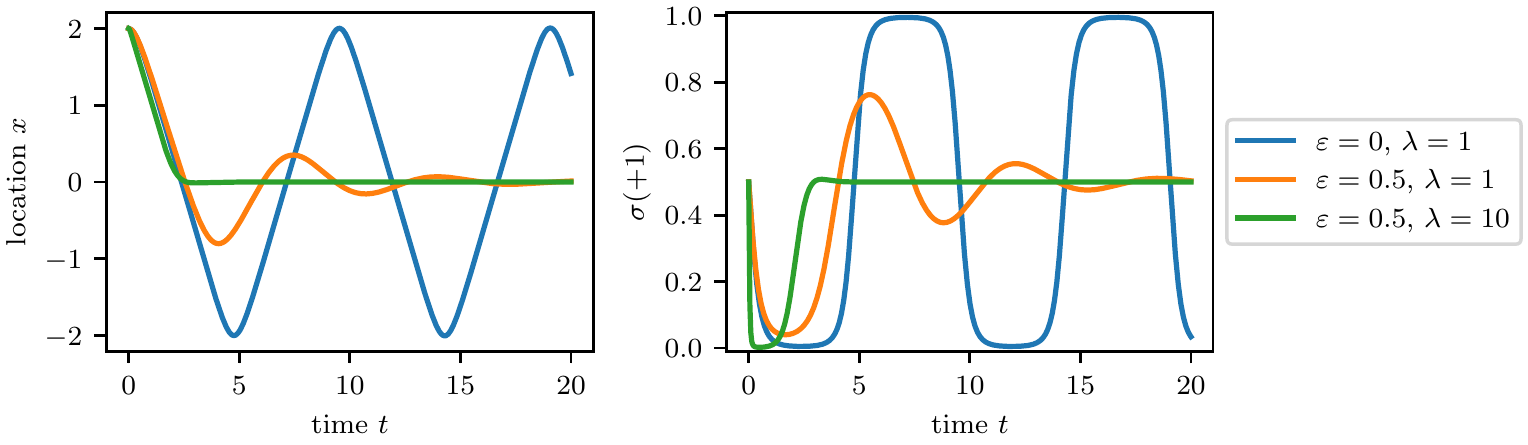}
	\caption{Comparison of original game dynamics and entropic regularization (possibly with accelerated time-scale for the strategy dynamics). For details see Examples \ref{ex:BasicModel} and \ref{ex:BasicModelEntropy}.}
	\label{fig:BasicModelComparison}
\end{figure}

\begin{example}[Slow reaction in spatially inhomogeneous evolutionary games]
\label{ex:BasicModel}
Let $d=1$, $U=\{-1,+1\}$, $e(x,u)=u$, so pure strategies correspond to moving left or right with unit speed. Let now $J(x,u,x',u')=-x \cdot u$, i.e.~agents prefer moving towards the origin, independently of the other agents' locations and actions. For simplicity we can set $N=1$ and choose $x(0)=2$, $\sigma(0)=(0.5,0.5)$, where we identify measures on $U=\{-1,+1\}$ with vectors of the simplex in $\R^2$. At $t=0$ the agent perceives strategy $-1$ as more attractive. The mixed strategy $\sigma$ starts pivoting towards this pure strategy and the agent starts moving towards the origin. The numerically simulated trajectory for the original model \eqref{eq:GameModel} is shown in Figure \ref{fig:BasicModelComparison} (corresponding to line $\varepsilon = 0$, $\lambda = 1$). We can see how $\sigma$ rapidly converges to the state $(0,1)$. Thus when the agent reaches the origin, it cannot react quickly and there is considerable overshoot.
\end{example}

As a remedy, we propose an adaptation of the model \eqref{eq:GameModel} by adding an entropic term to the dynamics of the mixed strategies $\sigma_i$.
The update rule \eqref{eq:replicator} modifies $\sigma_i$ towards maximizing the average benefit
\begin{equation*}
	\frac{1}{N} \sum_{j=1}^N \int_{U \times U} J(x_i,u,x_j,u')\,\diff \sigma_i(u)\,\diff \sigma_j(u').
\end{equation*}
To this objective we will now add the entropy $\veps\cdot \int_U \log(\RadNik{\sigma_i}{\eta})\,\diff \sigma_i$ where $\eta \in \prob(U)$ is a suitable `uniform' reference measure and $\veps>0$ is a weighting parameter.
This will pull $\sigma_i$ towards $\eta$ and thus prevent exponential degeneration.

\paragraph{Entropic regularization.} The entropic regularization implies that mixed strategies are no longer general probability measures over $U$, but they become densities with respect to a reference measure $\eta \in \prob(U)$, which we can assume without loss of generality to have full support, i.e.~$\spt(\eta) = U$. Thus, for a fixed $\varepsilon > 0$, set
\begin{align}
	\label{eq:SigmaSet}
	\sigmaSet \assign \left\{ \sigma\colon U \to \R_+, \;\sigma \text{ measurable,} \;\sigma(u) \in [a,b]\text{ } \eta\text{-a.e.},\;
		\int_U \sigma\,\diff \eta = 1 \right\}
\end{align}
where $0 < a < 1 < b < \infty$ (the bounds $a$ and $b$ are required for technical reasons, see below). For $\lambda > 0$, the modified entropic dynamics is then given by
\begin{subequations}
\label{eq:EntropicGameModel}
\begin{align}
	\label{eq:EntropicGameModelX}
	\partial_t x_i(t) & = \int_U e(x_i(t),u)\, \sigma_i(t)(u)\,\diff \eta(u), \\
	\label{eq:EntropicGameModelSigma}
	\partial_t \sigma_i(t) & = \lambda \cdot \left[ \frac{1}{N} \sum_{j=1}^N f^J\big(x_i(t),\sigma_i(t),x_j(t),\sigma_j(t)\big) +f^\veps\big(\sigma_i(t)\big) \right]
\end{align}
\end{subequations}
where the function $f^J$ now formally needs to be given in terms of densities, instead of measures:
\begin{multline}
	\label{eq:fJ}
	f^J\big(x,\sigma,x',\sigma'\big)
			 \assign \\ \left[ 
				\int_U J(x,\cdot,x',u')\,\sigma'(u')\,\diff \eta(u') - \int_U \int_U J(x,v,x',u')\,\sigma'(u')\,\sigma(v)\,\diff \eta(u')\,\diff \eta(v) \right] \cdot \sigma.
\end{multline}
The additional term $f^\veps$, corresponding to entropy regularization, is given by:
\begin{align}
	\label{eq:fH}
	f^\veps(\sigma) & \assign \veps \cdot \left[
		-\log\big(\sigma(\cdot)\big) + \int_U \log\big(\sigma(v)\big)\,\sigma(v)\,\diff \eta(v) \right] \cdot \sigma.
\end{align}
In \eqref{eq:EntropicGameModelSigma} we have explicitly added the factor $\lambda \in (0,\infty)$ to control the relative time-scale of the dynamics for mixed strategies and locations.

\begin{example}[Faster reactions with entropic regularization]
\label{ex:BasicModelEntropy}
We repeat here Example \ref{ex:BasicModel} with added entropy. We set $\eta=(0.5,0.5)$, keeping the symmetry between strategies $+1$ and $-1$ in the regularized system. Numerically simulated trajectories for this setup with different choices of $\lambda$ and $\veps$ are shown in Figure \ref{fig:BasicModelComparison}. For $(\lambda=1,\veps=0.5)$, the strategy does not get as close to $(0,1)$ as for $(\lambda=1,\veps=0)$. Therefore, upon crossing the origin the agent can react quicker, there is less overshoot and the trajectory eventually converges to $0$. Finally, for $(\lambda=10,\veps=0.5)$ the agent moves quickly and without overshoot to the origin, quickly adapting the mixed strategy.
\end{example}

\begin{remark}[Entropic gradient flow]
\label{rem:EntropicGradient}
Formally the contribution of $f^\veps$ to the dynamics \eqref{eq:EntropicGameModelSigma} corresponds to a gradient flow in the Hellinger--Kakutani metric over $\sigmaSet$ of the (negative) entropy
\begin{align}
	H(\sigma) \assign \int_U \log\big(\sigma\big)\,\sigma\,\diff \eta
\end{align}
of the density $\sigma$ with respect to the reference measure $\eta$.
\end{remark}

\begin{remark}[Multiple agent species]\label{rem:multipleSpecies}
	In the interaction models mentioned so far (\eqref{eq:NewtonModel}, \eqref{eq:GameModel}, \eqref{eq:EntropicGameModel}), all agents are of the same species (i.e.,~their movement is specified by the same functions). Often one wishes to model the interaction between agents of multiple species (e.g.,~predators and prey).
	The latter case can formally be subsumed into the former by expanding the physical space from $\R^d$ to $\R^{1+d}$ and using the first spatial dimension as `species label', i.e.,~an agent $\hat{x}_i=(n_i,x_i) \in \R^{1+d}$ describes an agent of species $n_i \in \Z$ at position $x_i \in \R^d$.
	The interaction force $\hat{f} : \R^{1+d} \times \R^{1+d} \to \R^{1+d}$ in the expanded version of \eqref{eq:NewtonModel} is then given by $\hat{f}\big((n,x),(n',x')) \assign (0,f_{n,n'}(x,x'))$ where $f_{n,n'} : \R^d \times \R^d \to \R^d$ is the interaction function for species $(n,n')$ and we set the first component of $\hat{f}$ to zero, such that the species of each agent remains unchanged. In an entirely analogous fashion functions $J$ and $e$ in models \eqref{eq:GameModel} and \eqref{eq:EntropicGameModel} can be extended.
\end{remark}

\paragraph{Well-posedness and mean-field limit of the regularized system.}
Following \cite{AmFoMoSa2018} (see \eqref{eq:GameModelNewton}), we analyse \eqref{eq:EntropicGameModel} as an interaction system in the Banach space $Y = \R^d \times L^p_\eta(U)$, $1 \leq p < \infty$. For $y = (x,\sigma) \in Y$ we set $\|y\|_Y = \|x\| + \|\sigma\|_{L^p_\eta(U)}$. Define
\begin{align}
	\label{eq:FullSpace}
	\fullSpace \assign \R^d \times \sigmaSet,
\end{align}
and for $y, y' \in \fullSpace$ let us set
\begin{align}
	\label{eq:BanachF}
f \colon \fullSpace \times \fullSpace \to Y, \quad f(y,y') = [f^{e}(x,\sigma), \lambda f^{J,\veps}(x,\sigma,x',\sigma')]
\end{align}
where
\begin{align}
	\label{eq:BanachFParts}
f^{e}(x,\sigma) = \int_U e(x,u)\, \sigma(u)\,\diff \eta(u) \quad \text{and} \quad f^{J,\veps}(x,\sigma,x',\sigma') = f^J\big(x,\sigma,x',\sigma'\big) +f^\veps\big(\sigma\big),
\end{align}
so that \eqref{eq:EntropicGameModel} takes the equivalent form
\begin{equation}\label{eq:BanachInteraction}
\partial_t y_i(t) = \frac{1}{N} \sum_{j=1}^N f(y_i(t), y_j(t)) \quad \text{ for }i = 1,\dots, N.
\end{equation}
Similar to \cite{AmFoMoSa2018}, well-posedness of \eqref{eq:BanachInteraction} relies on the Lipschitz continuity of $f$ and on the following compatibility condition (for the corresponding proofs see Appendix \ref{sec:Analysis}, Lemmas \ref{lem:fEpsLip}, \ref{lem:fJLip} and \ref{lem:CompatII}).
\begin{lemma}[Compatibility condition]
	\label{lem:Compat}
	For $J \in \fullX$ and $\varepsilon > 0$, let $f^J$ and $f^\varepsilon$ be defined as in \eqref{eq:fJ} and \eqref{eq:fH}. Then, there exist $a,b$ with $0 < a < 1 < b < \infty$ such that for any $(x,\sigma),(x',\sigma') \in \R^d \times \sigmaSet$ there exists some $\theta>0$ such that
	\begin{align}
	\label{eq:Compat}
	\sigma + \theta \lambda \left[f^J\big(x,\sigma,x',\sigma'\big) +f^\veps\big(\sigma\big) \right] \in \sigmaSet.
	\end{align}
\end{lemma}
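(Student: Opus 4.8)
The plan is to reduce the inclusion to two independent requirements on the explicit Euler step and then to choose the bounds $a,b$ so that the entropic part of the field points strictly inward at the boundary of $\sigmaSet$. Both $f^J$ and $f^\veps$ carry $\sigma$ as a common factor, so writing $g(u) \assign \int_U J(x,u,x',u')\,\sigma'(u')\,\diff\eta(u')$ and $\bar g \assign \int_U g\,\sigma\,\diff\eta$, the candidate update is $\sigma\,(1+\theta\lambda\,c)$ with $c(u) = (g(u)-\bar g) + \veps\,(-\log\sigma(u) + H(\sigma))$. A direct computation shows $\int_U f^J\,\diff\eta = \int_U f^\veps\,\diff\eta = 0$ (each bracket is $\sigma$-mean-zero: $\int_U g\,\sigma\,\diff\eta = \bar g$ and $\int_U \log\sigma\,\sigma\,\diff\eta = H(\sigma)$), so the normalization $\int_U\sigma\,\diff\eta = 1$ is preserved for \emph{every} $\theta$. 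Thus it remains only to secure the pointwise bounds $a \le \sigma(1+\theta\lambda c)\le b$ $\eta$-a.e.

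The core of the argument is to pick $a,b$ so that $c\ge 0$ wherever $\sigma$ is near $a$ and $c\le 0$ wherever $\sigma$ is near $b$. Since $J$ is bounded and $\sigma,\sigma'$ are probability densities, $|g-\bar g|\le 2\|J\|_\infty$, so it suffices that the entropic term beat $2\|J\|_\infty$ at each boundary. At the lower boundary I would use Jensen's inequality for the convex map $s\mapsto s\log s$, which gives $H(\sigma)\ge 0$; hence whenever $\sigma(u)\le s_0 \assign e^{-2\|J\|_\infty/\veps}$ one has $-\log\sigma(u)+H(\sigma)\ge 2\|J\|_\infty/\veps$ and therefore $c(u)\ge 0$. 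Choosing $a<s_0$ then guarantees the field never pushes $\sigma$ below $a$.

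The upper boundary is the delicate point, and I expect it to be the main obstacle. The naive estimate $H(\sigma)\le\log b$ only gives $-\log\sigma+H\le 0$ at $\sigma=b$, which is too weak to dominate $2\|J\|_\infty$; because the entropy is convex the obvious bound degenerates precisely at the boundary value. Instead I would bound $\log b - H(\sigma) = \int_U\log(b/\sigma)\,\sigma\,\diff\eta$ from below uniformly over $\sigmaSet$. As $H$ is a convex functional, its maximum over the convex set $\sigmaSet$ is attained at a bang-bang density valued in $\{a,b\}$ with proportions fixed by $\int_U\sigma\,\diff\eta=1$, which yields $\log b - H(\sigma)\ge\beta(a,b)\assign\tfrac{a(b-1)}{b-a}\log(b/a)$. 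Since $\beta(a,b)\to\infty$ as $b\to\infty$ with $a$ fixed, choosing $b$ large enough that $\veps\,\beta(a,b) > 2\|J\|_\infty$ produces a threshold $s_1<b$ with $\sigma(u)\ge s_1\Rightarrow c(u)\le 0$.

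Finally I would upgrade these boundary signs to a single admissible $\theta$. Since $\log\sigma\in[\log a,\log b]$ and $H(\sigma)\in[0,\log b]$, the multiplier is bounded, $|c|\le M\assign 2\|J\|_\infty+\veps\log(b/a)$ (the case $M=0$ being trivial). Rewriting the target inclusion as $a/\sigma-1\le\theta\lambda c\le b/\sigma-1$, the left inequality is automatic where $c\ge0$ and the right where $c\le0$; and where $c>0$ (resp. $c<0$) the thresholds above force $\sigma<s_1$ (resp. $\sigma>s_0$), so the relevant slack $b/\sigma-1\ge b/s_1-1>0$ (resp. $1-a/\sigma\ge 1-a/s_0>0$) is bounded below by a positive constant. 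Hence any $\theta\le\min\{\,b/s_1-1,\ 1-a/s_0\,\}/(\lambda M)$ satisfies the bounds for a.e.\ $u$ simultaneously, which proves the claim with $a,b$ depending only on $\|J\|_\infty$ and $\veps$.
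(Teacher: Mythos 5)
Your proof is correct and follows essentially the same route as the paper's: preservation of the normalization because $f^J$ and $f^\veps$ are mean-zero, the bounds $|g-\bar g|\le 2\|J\|_\infty$ and $0\le H(\sigma)\le K_{a,b}$ (your $\beta(a,b)$ is exactly $\log b - K_{a,b}$ for the paper's constant $K_{a,b}$), and the same two thresholds $s_0=e^{-2\|J\|_\infty/\veps}$ and $s_1=e^{2\|J\|_\infty/\veps+K_{a,b}}$, which coincide with the paper's $L^\ell_\veps$ and $L^u_\veps$. Your sign-based final step is a compact repackaging of the paper's three-region case analysis, and you additionally make explicit the one-dimensional asymptotics ($\beta(a,b)\to\infty$ as $b\to\infty$ with $a$ fixed) that the paper leaves implicit.
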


Intuitively, for specific choices of the bounds $a$ and $b$, \eqref{eq:Compat} states that moving from $\sigma \in \sigmaSet$ into the direction generated by $f^J$ and $f^\veps$, we will always remain within $\sigmaSet$ for some finite time. Eventually, similar to \eqref{eq:GameModelMeanField}, a mean-field limit description of \eqref{eq:BanachInteraction} is formally given by
\begin{align}
	\label{eq:ContEqY}
	\partial_t \Sigma(t) + \ddivBanach\big(b(\Sigma(t)) \cdot \Sigma(t)\big) = 0
	\quad \text{with} \quad
	b(\Sigma(t))(y) \assign \int_{\fullSpace}\!\! f(y,y')\,\diff\Sigma(t)(y').
\end{align}
Like \eqref{eq:GameModelMeanField} this is a PDE whose domain is a Banach space and we refer to \cite{AmFoMoSa2018} for the technical details. We can then summarize the main result in the following theorem.

\begin{theorem}[Well-posedness and mean-field limit of entropic model]
	\label{thm:wellPosedEntropic}
	Let $J \in \fullX$, $\lambda, \veps > 0$, $T<+\infty$. Let $0 < a < 1 < b < \infty$ in accordance with Lemma \ref{lem:Compat}. Then:
	\begin{enumerate}
		\item Given $\bar{\mathbf{y}}^N = (\bar{y}_1^N,\dots, \bar{y}_N^N) \in \fullSpace^N$ there exists a unique trajectory $\mathbf{y}^N=(y_1^N,\ldots,y_N^N) \colon [0,T]\allowbreak \to \fullSpace^N$ of class $C^1$ solving \eqref{eq:BanachInteraction} with $\mathbf{y}^N(0) = \bar{\mathbf{y}}^N$. In particular, $\Sigma^N(t) \assign \tfrac{1}{N} \sum_{i=1}^N \delta_{y_i^N(t)}$ provides a solution of \eqref{eq:ContEqY} for initial condition $\bar{\Sigma}^N \assign \tfrac{1}{N} \sum_{i=1}^N \delta_{\bar{y}_i^N}$.
		\label{item:wellPosedEntropicDiscrete}
		\item Given $\bar{\Sigma} \in \prob(\fullSpace)$ there exists a unique $\Sigma \in C([0,T]; (\prob(\fullSpace), W_1) )$ satisfying in the weak sense the continuity equation \eqref{eq:ContEqY} with initial condition $\Sigma(0) = \bar{\Sigma}$.
		\label{item:wellPosedEntropicContinuous}
		\item For initial conditions $\bar{\Sigma}_1, \bar{\Sigma}_2 \in \prob(\fullSpace)$ and the respective solutions $\Sigma_1$ and $\Sigma_2$ of \eqref{eq:ContEqY} one has the stability estimate
		\begin{align}
			W_1(\Sigma_1(t),\Sigma_2(t)) \leq \exp\big(2\Lip(f)\,(t-s)\big) \cdot W_1(\Sigma_1(s),\Sigma_2(s))
		\end{align}
		for every $0 \leq s \leq t \leq T$.
		\label{item:wellPosedEntropicStability}
	\end{enumerate}
\end{theorem}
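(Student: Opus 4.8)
The plan is to recognize that all three assertions are instances of the abstract well-posedness and mean-field theory for Banach-space interaction systems developed in \cite{AmFoMoSa2018}, so that the real task reduces to verifying the two structural hypotheses of that theory for the specific map $f$ of \eqref{eq:BanachF}: global Lipschitz continuity of $f$ on $\fullSpace \times \fullSpace$, and forward invariance of the admissible set $\fullSpace = \R^d \times \sigmaSet$ under the dynamics. The latter is exactly the content of the compatibility condition in Lemma \ref{lem:Compat}, while Lipschitz continuity is furnished by the appendix Lemmas \ref{lem:fEpsLip}, \ref{lem:fJLip} and \ref{lem:CompatII}. I would moreover record that $f$ is bounded on $\fullSpace \times \fullSpace$: the component $f^e$ is bounded since $e \in \Lip_b$, while $f^J$ and $f^\veps$ are bounded because on $\sigmaSet$ the density $\sigma$ is confined to $[a,b]$, on which both $J$ and $\log(\cdot)$ are bounded.

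For item \ref{item:wellPosedEntropicDiscrete} I would view \eqref{eq:BanachInteraction} as an ODE $\partial_t \mathbf{y}^N = F(\mathbf{y}^N)$ on the Banach space $Y^N$, where $F$ inherits the Lipschitz continuity of $f$. The Cauchy--Lipschitz (Picard--Lindel\"of) theorem in Banach spaces then yields a unique maximal $C^1$ solution; forward invariance of $\sigmaSet$ from Lemma \ref{lem:Compat} keeps each $\sigma_i^N(t)$ in $\sigmaSet$, and boundedness of $f^e$ bounds $|\partial_t x_i^N|$, ruling out finite-time blow-up, so the trajectory extends to all of $[0,T]$. That the empirical measure $\Sigma^N$ solves \eqref{eq:ContEqY} weakly is then a direct computation: differentiating $\int \varphi\,\diff\Sigma^N(t) = \tfrac1N\sum_i \varphi(y_i^N(t))$ for a smooth cylindrical test function $\varphi$ and inserting \eqref{eq:BanachInteraction} reproduces the weak form of the continuity equation with velocity $b(\Sigma^N(t))$.

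For item \ref{item:wellPosedEntropicContinuous} I would follow the Lagrangian construction of \cite{AmFoMoSa2018}: given a candidate curve $\Sigma \in C([0,T];(\prob(\fullSpace),W_1))$, the nonautonomous field $y \mapsto b(\Sigma(t))(y) = \int_{\fullSpace} f(y,y')\,\diff\Sigma(t)(y')$ is Lipschitz in $y$ uniformly in $t$, so its characteristic flow is well defined and, by Lemma \ref{lem:Compat}, maps $\fullSpace$ into itself; pushing $\bar{\Sigma}$ forward along this flow defines a map $\Sigma \mapsto \Gamma(\Sigma)$ on $C([0,T];(\prob(\fullSpace),W_1))$. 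Using that $\Sigma \mapsto b(\Sigma)$ is Lipschitz from $W_1$ to the sup-norm (again a consequence of $\Lip(f) < \infty$) together with a Gr\"onwall estimate shows $\Gamma$ is a contraction on a short interval; iterating covers $[0,T]$ and produces the unique fixed point. Finally, item \ref{item:wellPosedEntropicStability} comes from the same Gr\"onwall argument applied to two solutions simultaneously: coupling $\bar{\Sigma}_1,\bar{\Sigma}_2$ by an optimal plan, transporting it along the two characteristic flows, and estimating the time derivative of the resulting admissible competitor for $W_1(\Sigma_1(t),\Sigma_2(t))$ yields a rate $2\Lip(f)$ — one factor from the $y$-dependence and one from the measure-dependence of $b$ — whence the stated exponential bound.

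The genuinely nontrivial work sits in the two structural hypotheses rather than in the abstract machinery. The main obstacle is the Lipschitz (and boundedness) estimate for $f$ on the Banach space, especially the entropic term $f^\veps$ of \eqref{eq:fH}, which involves $\log\sigma$ and is therefore Lipschitz only once $\sigma$ is bounded away from $0$, together with the quadratic-in-$\sigma$ replicator term $f^J$ of \eqref{eq:fJ}; this is precisely what forces the passage to densities and the two-sided bounds $a \leq \sigma \leq b$ defining $\sigmaSet$. Closely tied to it is the compatibility condition of Lemma \ref{lem:Compat}, where $a$ and $b$ must be chosen so that the drift never pushes $\sigma$ out of $[a,b]$. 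Once these are in hand, the three assertions follow from the general statements of \cite{AmFoMoSa2018} as outlined.
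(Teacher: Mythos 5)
Your proposal is correct and follows essentially the same route as the paper: both reduce the theorem to verifying the two structural hypotheses of the abstract theory of \cite{AmFoMoSa2018} --- global Lipschitz continuity of $f$ on $\fullSpace \times \fullSpace$ (Lemmas \ref{lem:fEpsLip} and \ref{lem:fJLip}, plus Lipschitz continuity of $f^e$) and the compatibility/invariance condition for $\fullSpace$ (Lemma \ref{lem:Compat}, which the paper upgrades to the extended form with $\theta$ uniform over bounded sets, cf.\ Theorem B.1 of \cite{AmFoMoSa2018}) --- and then invoke the general well-posedness and mean-field results. The one point where your self-contained sketch falls short of the full statement is uniqueness in item \ref{item:wellPosedEntropicContinuous}: your contraction argument for the push-forward map $\Gamma$ only yields uniqueness within the class of Lagrangian (flow push-forward) solutions, whereas the theorem asserts uniqueness among \emph{all} weak solutions of \eqref{eq:ContEqY}; to conclude that an arbitrary weak solution is a fixed point of $\Gamma$ one needs a uniqueness result for the linear continuity equation with Lipschitz velocity field in the Banach-space setting, which is exactly why the paper separately cites Theorem 5.3 of \cite{AmFoMoSa2018} (applicable because $Y$ is separable). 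Since you anchor the whole argument to that abstract theory anyway, this is a citation to add rather than a flaw in the approach.
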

\begin{proof}
	The theorem follows by invoking Theorem 4.1 from \cite{AmFoMoSa2018}. On a more technical level, Theorem 5.3 of \cite{AmFoMoSa2018} provides the uniqueness of the solution to \eqref{eq:ContEqY} in a Eulerian sense.
	We now show that the respective requirements are met.
	
	First, the set $\fullSpace$, \eqref{eq:FullSpace}, is a closed convex subset of $Y$ with respect to $\|\cdot\|_Y$ for any $0 < a < b < +\infty$.
	Likewise, for any $0 < a < b < +\infty$ the map $f$ driving the evolution \eqref{eq:BanachInteraction} is Lipschitz continuous: indeed, one can prove that $f^J$ and $f^\varepsilon$ are Lipschitz continuous (see Lemmas \ref{lem:fEpsLip} and \ref{lem:fJLip} in Appendix \ref{sec:Analysis}) and Lipschitz continuity of $f^e$ follows from the fact that $e$ is Lipschitz continuous and bounded.
	Furthermore, as a consequence of Lemma \ref{lem:Compat}, one can choose $a$ and $b$ so that the extended compatibility condition
	\begin{align*}
	\forall\,R > 0 \, \exists \, \theta > 0: \quad
	y,y' \in \fullSpace \cap B_R(0)
	\quad \Rightarrow \quad
	y + \theta\,f(y,y') \in \fullSpace
	\end{align*}
	holds, cf.~\cite[Theorem B.1]{AmFoMoSa2018}.
	Therefore, we may invoke \cite[Theorem 4.1]{AmFoMoSa2018}. Combining this with \cite[Theorem 5.3]{AmFoMoSa2018}, which is applicable since $L^p_\eta(U)$ is separable and thus so is $Y$, the result follows.
\end{proof}

\subsection{Undisclosed setting and fast reactions}
\label{sec:ModelNTFR}
In the model \eqref{eq:EntropicGameModel} the decision process of each agent potentially involves knowledge of the mixed strategies of the other agents. Often it is reasonable to assume that there is no such knowledge, which can be reflected in the model by assuming that the payoff $J(x,u,x',u')$ does not actually depend on $u'$, the other agent's strategy.
Note that $J(x,u,x',u')$ may still depend on $x'$, the other agent's location.
We call this the \emph{undisclosed} setting.

Additionally, often it is plausible to assume that the (regularized) dynamic that governs the mixed strategies $(\sigma_i(t))_{i=1}^{N}$ of the agents runs at a much faster time-scale compared to the physical motion of the spatial locations $(x_i(t))_{i=1}^N$.
This corresponds to a large value of $\lambda$ in \eqref{eq:EntropicGameModel}.
Therefore, for the undisclosed setting we study the \emph{fast-reaction} limit $\lambda \to \infty$.

The main results of this section are Theorems \ref{thm:wellPosedFastReaction} and \ref{thm:LambdaQuasiStaticConvergence} which establish that the undisclosed fast-reaction limit is in itself a well-defined model (with a consistent mean-field limit as $N \to \infty$) and that the undisclosed model converges to this limit as $\lambda \to \infty$.

\paragraph{Undisclosed setting.}
In the undisclosed setting the general formulas \eqref{eq:EntropicGameModel} for the dynamics can be simplified as follows:
\begin{subequations}
\label{eq:NonTelepathicGameModel}
\begin{align}
	\label{eq:NonTelepathicGameModelX}
	\partial_t x_i(t) & = \int_U e(x_i(t),u)\, \sigma_i(t)(u)\,\diff \eta(u), \\
	\label{eq:NonTelepathicGameModelSigma}
	\partial_t \sigma_i(t) & = \lambda \cdot \left[ \frac{1}{N} \sum_{j=1}^N f^J\big(x_i(t),\sigma_i(t),x_j(t)\big) +f^\veps\big(\sigma_i(t)\big) \right]
\end{align}
\end{subequations}
where $f^\veps$ is as given in \eqref{eq:fH} and $f^J$ simplifies to
\begin{align}
	\label{eq:fJNonTelepathic}
	f^J\big(x,\sigma,x'\big)
			 \assign \left[ 
				J(x,\cdot,x') - \int_U J(x,v,x')\,\sigma(v)\,\diff \eta(v) \right] \cdot \sigma.
\end{align}
For finite $\lambda < +\infty$ the dynamics of this model are still covered by Theorem \ref{thm:wellPosedEntropic}.

\paragraph{Fast reactions of the agents.}
Intuitively, as $\lambda \to \infty$ in \eqref{eq:NonTelepathicGameModel}, at any given time $t$ the mixed strategies $(\sigma_i(t))_{i=1}^N$ will be in the unique steady state of the dynamics \eqref{eq:NonTelepathicGameModelSigma} for fixed spatial locations $(x_i(t))_{i=1}^N$.
For given locations $\mathbf{x} = (x_1,\ldots,x_N) \in [\R^d]^N$ this steady state is given by
\begin{subequations}
\label{eq:QuasiStaticGameModel}
\begin{align}
	\label{eq:QuasiStaticGameModelSigma}
	\sigma_{i}^J(\mathbf{x}) &\equiv \sigma_{i}^J(x_1,\ldots,x_N) \assign \frac{\exp\left(\tfrac{1}{\veps N}\sum_{j=1}^N J(x_i,\cdot,x_j)\right)}{
	\int_U \exp\left(\tfrac{1}{\veps N}\sum_{j=1}^N J(x_i,v,x_j)\right)\,\diff \eta(v)
	}.
\intertext{(This computation is explicitly shown in the proof of Theorem \ref{thm:LambdaQuasiStaticConvergence}.) The spatial agent velocities associated with this steady state are given by}
	\label{eq:QuasiStaticGameModelV}
	v_{i}^J(\mathbf{x}) &\equiv v_i^J(x_1,\dots,x_N) \assign \int_U e(x_i,u)\, \sigma_{i}^J(x_1,\ldots,x_N)(u)\,\diff \eta(u)
\intertext{and system \eqref{eq:NonTelepathicGameModel} turns into a purely spatial ODE in the form}
	\label{eq:QuasiStaticGameModelX}
	\partial_t x_i(t) &= v_i^J(x_1(t),\dots,x_N(t)) \quad \text{for } i = 1,\dots,N.
\end{align}
\end{subequations}
Unlike in Newtonian models over $\R^d$, \eqref{eq:NewtonModel}, here the driving velocity field $v_i^J(x_1(t),\ldots,x_N(t))$ is not a linear superposition of the contributions by each $x_j(t)$. This non-linearity is a consequence of the fast-reaction limit and allows for additional descriptive power of the model that cannot be captured by the Newton-type model \eqref{eq:NewtonModel}. This is illustrated in the two subsequent Examples \ref{ex:FastReactionVsNewton} and \ref{ex:FastReactionVsNewtonII}.

\begin{example}[Describing Newtonian models as undisclosed fast-reaction models]
\label{ex:FastReactionVsNewton}
Newtonian models can be approximated by undisclosed fast-reaction entropic game models. We give a sketch for this approximation procedure. For a model as in \eqref{eq:NewtonModel}, choose $U \subset \R^d$ such that it contains the range of $f$ (for simplicity we assume that it is compact), let $e(x,u) \assign u$ and then set
$$J(x,u,x') \assign -\|u-f(x,x')\|^2.$$
Accordingly, the stationary mixed strategy of agent $i$ is given by
\begin{align*}
\sigma_{i}^J(\mathbf{x})(u) & = \normalize{\exp\left(-\frac{1}{\veps N}\sum_{j=1}^N \|u-f(x_i,x_j)\|^2\right)}
\end{align*}
where $\normalize{\cdot}$ denotes the normalization operator for a density with respect to $\eta$. Observe now that
\begin{align*}
\frac{1}{N} \sum_{j=1}^N \|u-f(x_i,x_j)\|^2 &= \|u\|^2 - \frac2N \sum_{j=1}^N u \cdot f(x_i,x_j) + \frac1N \sum_{j=1}^N \|f(x_i,x_j)\|^2
\intertext{while}
\left\|u-\frac{1}{N}\sum_{j=1}^N f(x_i,x_j)\right\|^2 &= \|u\|^2 - \frac2N \sum_{j=1}^N u \cdot f(x_i,x_j) + \frac{1}{N^2} \left\| \sum_{j=1}^N f(x_i,x_j)\right\|^2.
\end{align*}
Hence, since the terms not depending on $u$ are cancelled by the normalization, we also have
\begin{align*}
\sigma_{i}^J(\mathbf{x})(u) & = \normalize{\exp\left(-\frac{1}{\veps}\left\|u-\frac{1}{N}\sum_{j=1}^N f(x_i,x_j)\right\|^2\right)}.
\end{align*}
So the maximum of the density $\sigma_{i}^J(\mathbf{x})$ is at $\tfrac{1}{N}\sum_{j=1}^N f(x_i,x_j)$, which is the velocity imposed by the Newtonian model.
For small $\veps$, $\sigma_{i}^J(\mathbf{x})$ will be increasingly concentrated around this point and hence the game model will impose a similar dynamic. Numerically this is demonstrated in Figure \ref{fig:newtonvsNTFR} for a $1$-dimensional Newtonian model as in  \eqref{eq:NewtonModel} driven by
\begin{align}\label{eq:NewtonTrajectoriesF}
f(x,x') = - x - \frac{\tanh(5(x'-x))}{(1+\|x'-x\|)^2}.
\end{align}
\end{example}

\begin{figure}[tb]
	\centering
	\includegraphics{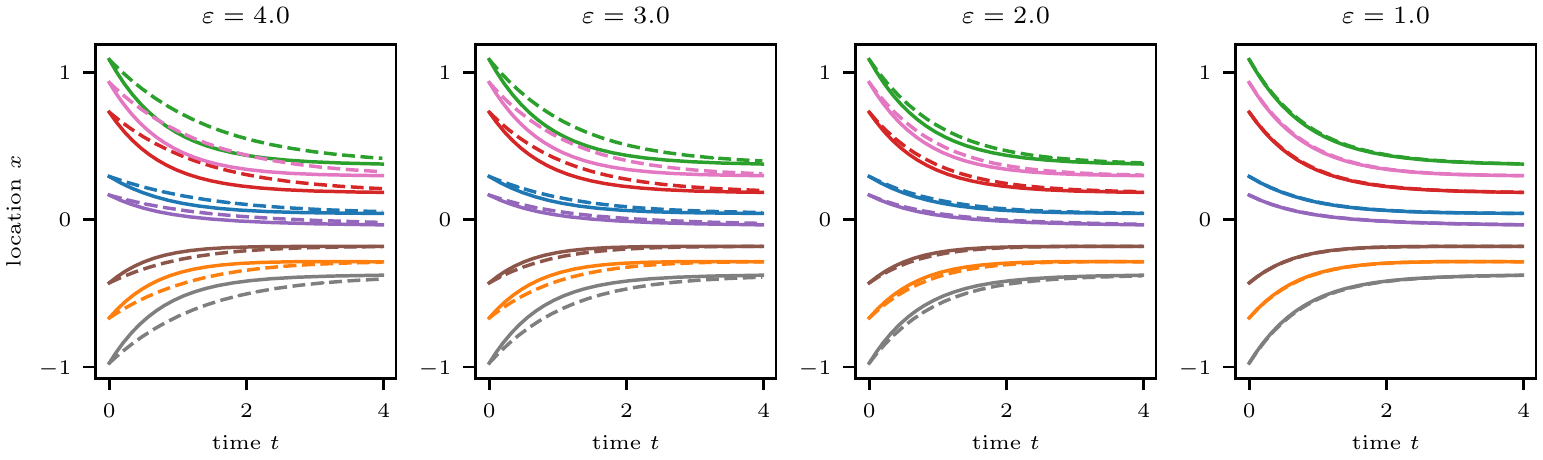}
	\caption{Approximation of a Newtonian model by an undisclosed fast-reaction entropic game model. Solid lines: original model, dashed lines: approximation. The Newtonian model is driven by  \eqref{eq:NewtonTrajectoriesF}, the approximation procedure is described in Example \ref{ex:FastReactionVsNewton}. The approximation becomes more accurate as $\veps$ decreases.}
	\label{fig:newtonvsNTFR}
\end{figure}

\begin{example}[Undisclosed fast-reaction models are strictly richer than Newtonian models]
\label{ex:FastReactionVsNewtonII}
In the previous example, each agent $j$ tried to persuade agent $i$ to move with velocity $f(x_i,x_j)$. Deviations from this velocity were penalized in the payoff function with a quadratic function. By minimizing the sum of these functions, the compromise of the linear average of all $f(x_i,x_j)$ then yields the best payoff. By picking different penalties we may obtain other interesting behaviour that cannot be described by Newtonian systems.

As above, let $U$ be a (sufficiently large) compact subset of $\R^d$, $e(x,u) \assign u$. Let $g : \R^d \to \R$ be a Lipschitz non-negative `bump function' with compact support, i.e.,~$g$ is maximal at $0$ with $g(0)=1$, and $g(u)=0$ for $\|u\|\geq \delta$ for some small $\delta >0$. Then we set
$$J(x,u,x') \assign (C-\|x-x'\|) \cdot g\left(u-\frac{x'-x}{\|x'-x\|} \right)-C \cdot g(u),$$
where $C>0$ is a suitable, sufficiently large constant. Then approximately, the function $u \mapsto \frac{1}{N}\sum_j J(x_i,u,x_j)$ is maximal at $u=\frac{x_j-x_i}{\|x_j-x_i\|}$ where $j$ is the agent that is closest to $i$ (if it is closer than $C$ and not closer than $\delta$, in particular the last term in $J$ is added to avoid that $u=0$ is the maximizer). Therefore, approximately, we have created a model, where agents are attracted with unit speed by their nearest neighbours, but not by other agents.
Such a non-linear trade-off between the influences of the other agents is not obtainable by a Newtonian model.
\end{example}

\begin{remark}[Fast reaction limit in the fully general case]
	\label{rem:TelepathicFastReaction}
	For the fully general setting (i.e., $J$ being also a function of $u'$) the study of the fast-reaction regime is a much harder problem as the stationary state of mixed strategies (for fixed spatial locations) often depends on the initial mixed strategies $(\sigma_i)_{i=1}^N$, thus the fast-reaction limit is a `genuine' quasi-static evolution problem. 
	We refer to \cite{MR3705699,SS2017,SS20172} for derivation and well-posedness results of quasi-static evolutions of critical points of nonconvex energies on the Euclidean space by means of vanishing-viscosity methods. A similar analysis is in the course of development for nonconvex energies on Hilbert spaces \cite{pers_comm}. Data-driven evolutions of critical points have been considered in \cite{2639}.
\end{remark}

\paragraph{Well-posedness and mean-field limit of the undisclosed fast-reaction system.}
Similar to the full model \eqref{eq:EntropicGameModel} we are also interested in a mean-field limit of the undisclosed fast-reaction limit as the number of agents tends to infinity, $N \to \infty$.
A formal limiting procedure leads to the equation
\begin{equation}
	\label{eq:ContEqR}
	\partial_t \mu(t) + \ddiv \left( v^J(\mu(t))\cdot \mu(t) \right) = 0
\end{equation}
where for $x \in \R^d$ and $\nu \in \prob(\R^d)$ we set
\begin{align}
	\label{eq:MeanFieldVel}
	v^J(\nu)(x) & \assign \int_U e(x,u)\sigma^J(\nu)(x, u)\,d\eta(u) \\
	\label{eq:MeanFieldSigma}
	\sigma^J(\nu)(x,\cdot) & \assign \frac{\exp\left(\tfrac{1}{\veps}\int_{\R^d} J(x,\cdot,x')\,d\nu(x')\right)}{
	\int_U \exp\left(\tfrac{1}{\veps}\int_{\R^d} J(x,v,x')\,d\nu(x')\right)\,\diff \eta(v)}.
\end{align}
Given $\mathbf{x} = (x_1,\dots,x_N) \in [\R^d]^N$ and setting $\mu^N = \frac1N \sum_{j=1}^N \delta_{x_j}$, \eqref{eq:QuasiStaticGameModel}, \eqref{eq:MeanFieldVel} and \eqref{eq:MeanFieldSigma} are related through
\begin{align*}
\sigma_i^J(\mathbf{x}) = \sigma_i^J(x_1,\dots,x_N) = \sigma^J(\mu^N)(x_i,\cdot) \quad \text{and} \quad v_i^J(\mathbf{x}) = v_i^J(x_1,\dots,x_N) = v^J(\mu^N)(x_i).
\end{align*}
The key ingredient for the study of \eqref{eq:QuasiStaticGameModel} and its limiting behaviour is to establish Lipschitz continuity of the fast-reaction equilibrium strategies and velocities with respect to payoff function and particle locations.
\begin{lemma}\label{lemma:UniformSigma}
	Let $J, J' \in X$ and consider $M > 0$ such that $M \geq \|J\|_{\infty} + \Lip(J)$ and $M \geq \|J'\|_{\infty} + \Lip(J')$. Let $\mu, \mu' \in \prob(\R^d)$ and $x, x' \in \R^d$. Then:
	\begin{enumerate}
		\item There exists $C = C(M, \veps)$ such that, for every $u \in U$,
		\begin{align}\label{eq:SigmaLip}
		\left| \sigma^{J}(\mu)(x,u) - \sigma^{J'}(\mu')(x',u) \right| &\leq C\left( W_1(\mu,\mu') + \|J - J'\|_\infty + \|x-x'\|\right)
		\end{align}
		and $1/C < \sigma^{J}(\mu)(x,u) < C$.
		\item There exists $C = C(M, e, \veps)$ such that
		\begin{align}\label{eq:VelocityLip}
		\left\| v^{J}(\mu)(x) - v^{J'}(\mu')(x') \right\| &\leq C\left( W_1(\mu,\mu') + \|J - J'\|_\infty + \|x-x'\|\right).
		\end{align}
	\end{enumerate}
\end{lemma}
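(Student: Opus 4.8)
The plan is to view $\sigma^J(\mu)(x,\cdot)$ as the softmax (Gibbs density, with respect to $\eta$) of the \emph{potential}
\[
\Phi^J(\mu)(x,u) \assign \int_{\R^d} J(x,u,x')\,\diff\mu(x'),
\]
so that $\sigma^J(\mu)(x,u) = \exp(\Phi^J(\mu)(x,u)/\veps)\big/ \int_U \exp(\Phi^J(\mu)(x,v)/\veps)\,\diff\eta(v)$. Since $\|J\|_\infty \le M$ and $\mu$ is a probability measure, $|\Phi^J(\mu)(x,u)| \le M$ uniformly in $(x,u)$, so both the numerator and the normalizing integral lie in $[\exp(-M/\veps),\exp(M/\veps)]$ (using that $\eta$ is a probability measure). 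This yields at once the two-sided bound $\exp(-2M/\veps) \le \sigma^J(\mu)(x,u) \le \exp(2M/\veps)$, proving $1/C < \sigma^J(\mu)(x,u) < C$ for any $C \ge \exp(2M/\veps)$, and—more importantly—it keeps the denominator bounded away from zero, which is exactly what upgrades continuity to Lipschitz continuity.

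First I would control the variation of the potential. Writing $\Phi^J(\mu)(x,u) - \Phi^{J'}(\mu')(x',u)$ as a telescoping sum over the three arguments $J$, $x$, $\mu$, I estimate the $J$-change by $\|J-J'\|_\infty$ (integrating against the probability measure $\mu$); the $x$-change by $\Lip(J')\,\|x-x'\| \le M\,\|x-x'\|$ (Lipschitzness of $J'$ in its first argument); and the $\mu$-change by applying the Kantorovich--Rubinstein duality \eqref{eq:W1dual} to $z \mapsto J'(x',u,z)$, whose Lipschitz constant is at most $\Lip(J') \le M$, giving $M\,W_1(\mu,\mu')$. Hence, uniformly in $u$,
\[
\big|\Phi^J(\mu)(x,u) - \Phi^{J'}(\mu')(x',u)\big| \le \|J-J'\|_\infty + M\,\|x-x'\| + M\,W_1(\mu,\mu').
\]

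Next comes the only genuinely technical step, the Lipschitz estimate for the softmax map. For potentials $\phi,\psi$ with $\|\phi\|_\infty,\|\psi\|_\infty \le M$ and $Z(\phi) \assign \int_U \exp(\phi/\veps)\,\diff\eta$, I would write
\[
\frac{\exp(\phi(u)/\veps)}{Z(\phi)} - \frac{\exp(\psi(u)/\veps)}{Z(\psi)} = \frac{\exp(\phi(u)/\veps)-\exp(\psi(u)/\veps)}{Z(\phi)} + \exp(\psi(u)/\veps)\Big(\tfrac{1}{Z(\phi)} - \tfrac{1}{Z(\psi)}\Big),
\]
and bound both summands using the scalar inequality $|\exp(a)-\exp(b)| \le \exp(\max(a,b))\,|a-b|$ (with $|a|,|b|\le M/\veps$) together with the lower bound $Z \ge \exp(-M/\veps)$; this gives $|S(\phi)(u)-S(\psi)(u)| \le C_2(M,\veps)\,\|\phi-\psi\|_\infty$. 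Taking $\phi = \Phi^J(\mu)(x,\cdot)$ and $\psi = \Phi^{J'}(\mu')(x',\cdot)$ and inserting the potential estimate above proves \eqref{eq:SigmaLip}. For part 2, I split
\[
v^J(\mu)(x) - v^{J'}(\mu')(x') = \int_U \big(e(x,u)-e(x',u)\big)\,\sigma^J(\mu)(x,u)\,\diff\eta(u) + \int_U e(x',u)\,\big(\sigma^J(\mu)(x,u)-\sigma^{J'}(\mu')(x',u)\big)\,\diff\eta(u);
\]
the first term is bounded by $\Lip(e)\,\|x-x'\|$ since $\int_U \sigma^J(\mu)(x,u)\,\diff\eta(u)=1$, and the second by $\|e\|_\infty$ times the estimate \eqref{eq:SigmaLip}, which gives \eqref{eq:VelocityLip}. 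The main obstacle, such as it is, is the uniform two-sided bound on $\sigma$: once it is in place the quotient estimate and everything downstream are elementary, since the remaining ingredients are only the Kantorovich duality and the mean-value bound for the exponential.
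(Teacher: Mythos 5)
Your proposal is correct and follows essentially the same route as the paper: the paper likewise introduces the averaged payoff $g(u)=\int_{\R^d}J(x,u,y)\,\diff\mu(y)$ (your $\Phi^J$), bounds $|g-g'|$ by $C\left(W_1(\mu,\mu')+\|J-J'\|_\infty+\|x-x'\|\right)$ via the triangle inequality and Kantorovich duality \eqref{eq:W1dual}, and then deduces \eqref{eq:SigmaLip}, the two-sided bound, and \eqref{eq:VelocityLip} from the uniform boundedness of $g,g'$ exactly as you do. Your telescoping of the potential, the explicit softmax quotient estimate, and the splitting of the velocity difference are precisely the details the paper's terse proof leaves implicit.
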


\begin{proof}	
	Let us define two continuous functions $g, g' \colon U \to \R$ as
	\[
	\begin{aligned}
	g(u) = \int_{\R^d} J(x, u, y)\,d\mu(y) \quad \text{and} \quad g'(u) = \int_{\R^d} J'(x', u, y)\,d\mu'(y)
	\end{aligned}
	\]
	For every $u \in U$, using $M \geq \|J\|_{\infty}$ and $M \geq \|J'\|_{\infty}$, we immediately obtain the global bounds $-M \leq g(x) \leq M$ and $-M \leq g'(x) \leq M$. Using the triangle inequality and the dual definition for $W_1$ in \eqref{eq:W1dual}, we also estimate
	\begin{equation}\label{eq:Lipfu}
	|g(u)-g'(u)| \leq C\left( W_1(\mu,\mu') + \|J - J'\|_\infty + \|x-x'\|\right)
	\end{equation}
	for $C = C(M)$. Recall now that
	\[
	\sigma^{J}(\mu)(x,u) = \frac{\exp( g(u)/\veps )}{\int_U \exp\left( g(v)/\veps \right)\,d\eta(v)} \quad \text{and} \quad \sigma^{J'}(\mu')(x',u) = \frac{\exp( g'(u)/\veps )}{\int_U \exp\left( g'(v)/\veps \right)\,d\eta(v)}.
	\]
	Thus, \eqref{eq:SigmaLip} follows combining global boundedness of $g, g'$ with \eqref{eq:Lipfu}, the uniform bounds $1/C < \sigma^{J}(\mu)(x,u) < C$ are obtained in the same way, while \eqref{eq:VelocityLip} follows from \eqref{eq:SigmaLip} combined with $e \in \Lip_b(\R^d \times U; \R^d)$.
\end{proof}

For the discrete fast-reaction system, the following Lemma adapts the estimate \eqref{eq:VelocityLip} as one in terms of discrete particle locations and their velocities.
\begin{lemma}[Map to undisclosed fast-reaction agent velocity is Lipschitz continuous]\label{lemma:LipschitzVelocityMap}
	For any $N > 0$, consider the map
	\[
	\mathbf{v}^J \colon [\R^d]^N \to [\R^d]^N, \quad (x_1,\dots,x_N) \mapsto (v_1^J(x_1,\dots,x_N), \dots, v_N^J(x_1,\dots,x_N))
	\]
	associated to the fast-reaction ODE \eqref{eq:QuasiStaticGameModel}. Then, $\mathbf{v}^J$ is Lipschitz continuous under the distance induced by $\|\cdot\|_N$, with Lipschitz constant $L = L(J, e, \veps)$.
\end{lemma}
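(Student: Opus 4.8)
The plan is to reduce everything to the velocity estimate \eqref{eq:VelocityLip} of Lemma \ref{lemma:UniformSigma}, using the identification $v_i^J(\mathbf{x}) = v^J(\mu^N)(x_i)$ with $\mu^N = \tfrac1N \sum_{j=1}^N \delta_{x_j}$ recorded just above that lemma. Fix two configurations $\mathbf{x} = (x_1,\dots,x_N)$ and $\mathbf{x}' = (x_1',\dots,x_N')$ in $[\R^d]^N$ and write $\mu^N = \tfrac1N\sum_j \delta_{x_j}$, $\nu^N = \tfrac1N\sum_j \delta_{x_j'}$ for the associated empirical measures. For each agent index $i$ one has $v_i^J(\mathbf{x}) = v^J(\mu^N)(x_i)$ and $v_i^J(\mathbf{x}') = v^J(\nu^N)(x_i')$, so applying \eqref{eq:VelocityLip} with the common payoff $J = J'$ (whence the $\|J-J'\|_\infty$ term vanishes) yields, uniformly in $i$,
\begin{equation*}
\|v_i^J(\mathbf{x}) - v_i^J(\mathbf{x}')\| \leq C\left( W_1(\mu^N,\nu^N) + \|x_i - x_i'\|\right),
\qquad C = C(J,e,\veps).
\end{equation*}

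The second step is to bound the Wasserstein distance between the two empirical measures by the scaled distance of the underlying points. Since $\mu^N$ and $\nu^N$ are supported on the same number of atoms, the diagonal coupling $\gamma = \tfrac1N\sum_j \delta_{(x_j,x_j')} \in \Gamma(\mu^N,\nu^N)$ is admissible in the primal problem \eqref{eq:W1primal}, so that
\begin{equation*}
W_1(\mu^N,\nu^N) \leq \int_{\R^d\times\R^d} \|y_1 - y_2\|\,\diff\gamma(y_1,y_2) = \frac1N\sum_{j=1}^N \|x_j - x_j'\| = \|\mathbf{x} - \mathbf{x}'\|_N.
\end{equation*}

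Finally I would combine the two displays, sum over $i$, and divide by $N$, which turns the per-agent bound into
\begin{equation*}
\|\mathbf{v}^J(\mathbf{x}) - \mathbf{v}^J(\mathbf{x}')\|_N = \frac1N\sum_{i=1}^N \|v_i^J(\mathbf{x}) - v_i^J(\mathbf{x}')\| \leq C\,\|\mathbf{x}-\mathbf{x}'\|_N + \frac{C}{N}\sum_{i=1}^N \|x_i - x_i'\| = 2C\,\|\mathbf{x}-\mathbf{x}'\|_N,
\end{equation*}
giving Lipschitz continuity with constant $L = 2C = L(J,e,\veps)$, as claimed. There is no genuine obstacle here: all the analytic content—the smoothing by the softmax in \eqref{eq:MeanFieldSigma} and the resulting Lipschitz dependence on $(\mu,x)$—is already packaged in Lemma \ref{lemma:UniformSigma}. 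The only point requiring minor care is that \eqref{eq:VelocityLip} simultaneously perturbs the measure argument and the evaluation point, which is precisely why both the empirical-measure term $W_1(\mu^N,\nu^N)$ and the single-particle term $\|x_i-x_i'\|$ appear, and why the diagonal-coupling bound is needed to re-express $W_1(\mu^N,\nu^N)$ back in terms of $\|\mathbf{x}-\mathbf{x}'\|_N$ so that the estimate closes in the scaled norm.
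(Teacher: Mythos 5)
Your proof is correct and follows essentially the same route as the paper's own: both apply the estimate \eqref{eq:VelocityLip} of Lemma \ref{lemma:UniformSigma} with $J'=J$ to the empirical measures, bound $W_1(\mu^N,\nu^N)\leq\|\mathbf{x}-\mathbf{x}'\|_N$ via the diagonal coupling in \eqref{eq:W1primal} (which you merely make more explicit), and sum over the agents to obtain the Lipschitz constant $2C$.
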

\begin{proof}
	Fix any $\mathbf{x},\mathbf{x}' \in [\R^d]^N$ and define $\mu = \frac{1}{N} \sum_{j=1}^N \delta_{x_i}$ and $\mu' = \frac{1}{N} \sum_{j=1}^N \delta_{x_i'}$. Then, by Lemma \ref{lemma:UniformSigma}, with $J' = J$, $x = x_i$ and $x' = x_i'$, we have
	\[
	\left\| v^{J}(\mu)(x_i) - v^{J}(\mu')(x_i') \right\| \leq C\left( W_1(\mu,\mu') + \|x_i-x_i'\|\right)
	\]
	for every $i = 1,\dots, N$, with $C = C(J,e,\veps)$. Using the definition of $W_1$ in \eqref{eq:W1primal}, we observe
	\[
	W_1(\mu,\mu') \leq \frac{1}{N}\sum_{j=1}^N \|x_i - x_i'\| = \|x - x'\|_N
	\]
	so that
	\begin{align}
	&\|\mathbf{v}^J(\mathbf{x})-\mathbf{v}^J(\mathbf{x}')\|_N =\frac{1}{N}\sum_{i=1}^N \| v_i^J(x_1,\dots,x_N) - v_i^J(x_1',\dots,x_N') \| \nonumber \\
	&\quad = \frac{1}{N}\sum_{i=1}^N\left\| v^{J}(\mu)(x_i) - v^{J}(\mu')(x_i') \right\|\leq \frac{C}{N} \sum_{i=1}^N \left( \|x-x'\|_N + \|x_i-x_i'\| \right) = 2C \|\mathbf{x}-\mathbf{x}'\|_N \nonumber 
	\end{align}
	which is the sought-after estimate.
\end{proof}

After clarifying what we mean by a solution of \eqref{eq:ContEqR} we summarize the mean-field result in Theorem \ref{thm:wellPosedFastReaction}, whose proof then builds on Lipschitz continuity of the velocity field in the discrete system and follows by fairly standard arguments (see, e.g., \cite{Carrillo2011, Carrillo2014}).
\begin{definition}
	We say a curve $\mu \in C([0,T]; (\prob(\R^d), W_1) )$ solves \eqref{eq:ContEqR} if $\mu(t)$ has uniformly compact support for $t \in [0,T]$ and
\begin{align*}
	\frac{d}{dt} \int_{\R^d} \phi(x)\diff\mu(t)(x)
	= \int_{\R^d} \nabla\phi(x) \cdot v^J(\mu(t))(x)\,\diff\mu(t)(x) \quad \text{for every $\phi \in C^\infty_c(\R^d)$}.
\end{align*}
\end{definition}

\begin{theorem}[Well-posedness and mean-field limit of undisclosed fast-reaction model]
	\label{thm:wellPosedFastReaction}
	Let $J \in X$, $\veps > 0$, $0 < \bar{R} < +\infty$, and $0 < T < +\infty$. Define $R = \bar{R} + T \cdot \|e\|_\infty$. Then:
	\begin{enumerate}
		\item Given $\bar{\mathbf{x}}^N = (\bar{x}_1^N,\dots, \bar{x}_N^N) \in [\dball{\bar{R}}]^N$ there exists a unique trajectory $\mathbf{x}^N=(x_1^N,\ldots,x_N^N) \colon\allowbreak [0,T] \to [\dball{R}]^N$ of class $C^1$ solving \eqref{eq:QuasiStaticGameModelX} with $\mathbf{x}^N(0) = \bar{\mathbf{x}}^N$.
		In particular, $\mu^N(t) \assign \tfrac{1}{N} \sum_{i=1}^N \delta_{x_i^N(t)}$ provides a solution of \eqref{eq:ContEqR} for initial condition $\bar{\mu}^N \assign \tfrac{1}{N} \sum_{i=1}^N \delta_{\bar{x}_i^N}$.
		\label{item:QuasiStaticDiscrete}
		\item Given $\bar{\mu} \in \prob(\dball{\bar{R}})$ there exists a unique $\mu \in C([0,T]; (\prob(\dball{R}), W_1) )$ satisfying in the weak sense the continuity equation \eqref{eq:ContEqR} with initial condition $\mu(0)=\bar{\mu}$.
		\label{item:QuasiStaticContinuous}
		\item For initial conditions $\bar{\mu}_1, \bar{\mu}_2 \in \prob(\dball{\bar{R}})$ and the respective solutions $\mu_1$ and $\mu_2$ of \eqref{eq:ContEqR} one has the stability estimate
		\begin{align}
			W_1(\mu_1(t),\mu_2(t)) \leq \exp(C\,(t-s)) \cdot W_1(\mu_1(s),\mu_2(s))
		\end{align}
		for every $0 \leq s \leq t \leq T$, with $C = C(J, e, \veps, \bar{R}, T)$.
		\label{item:QuasiStaticStability}
	\end{enumerate}
\end{theorem}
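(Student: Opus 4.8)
The plan is to address the three claims in the order forced by their dependencies: first the discrete ODE of item \ref{item:QuasiStaticDiscrete}, then the stability estimate of item \ref{item:QuasiStaticStability} (which simultaneously yields uniqueness and serves as the engine for mean-field existence), and finally the continuous existence in item \ref{item:QuasiStaticContinuous}. For item \ref{item:QuasiStaticDiscrete}, Lemma \ref{lemma:LipschitzVelocityMap} gives that $\mathbf{x} \mapsto \mathbf{v}^J(\mathbf{x})$ is globally Lipschitz on $([\R^d]^N,\|\cdot\|_N)$, so the Cauchy--Lipschitz theorem produces a unique $C^1$ solution of \eqref{eq:QuasiStaticGameModelX} on $[0,T]$ for any initial datum. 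To confine it I would use that each equilibrium strategy $\sigma_i^J(\mathbf{x})$ is a probability density with respect to $\eta$, whence $\|v_i^J(\mathbf{x})\| = \|\int_U e(x_i,u)\,\sigma_i^J(\mathbf{x})(u)\,\diff\eta(u)\| \leq \|e\|_\infty$; integrating in time, a trajectory starting in $\dball{\bar R}$ cannot leave $\dball{R}$ with $R = \bar R + T\|e\|_\infty$. That $\mu^N(t) = \tfrac1N\sum_i \delta_{x_i^N(t)}$ solves \eqref{eq:ContEqR} is then a direct computation: for $\phi \in C_c^\infty(\R^d)$ one has $\tfrac{d}{dt}\int \phi\,\diff\mu^N = \tfrac1N\sum_i \nabla\phi(x_i^N)\cdot v_i^J(\mathbf{x}^N)$, and the identity $v_i^J(\mathbf{x}^N) = v^J(\mu^N)(x_i^N)$ recorded before Lemma \ref{lemma:UniformSigma} turns the right-hand side into $\int \nabla\phi\cdot v^J(\mu^N)\,\diff\mu^N$.

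Next I would prove the stability estimate of item \ref{item:QuasiStaticStability} by a Dobrushin-type coupling argument along characteristics. For a fixed solution $\mu_k$ ($k=1,2$) the field $v^J(\mu_k(t))(\cdot)$ is Lipschitz in space, uniformly in $t$, by Lemma \ref{lemma:UniformSigma} with the measure argument held fixed, so its flow $T_t^k$ (from time $s$) is well defined and the solution is represented as $\mu_k(t) = (T_t^k)_\#\mu_k(s)$. Taking an optimal coupling $\gamma_s$ of $\mu_1(s),\mu_2(s)$ and transporting it to $\gamma_t = (T_t^1,T_t^2)_\#\gamma_s$, I would differentiate $D(t) = \int \|x-y\|\,\diff\gamma_t$ and bound the integrand via Lemma \ref{lemma:UniformSigma} applied with $J'=J$, so that the $\|J-J'\|_\infty$ term drops, using that $W_1(\mu_1(t),\mu_2(t)) \leq D(t)$ because $\gamma_t$ is admissible. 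This yields $\tfrac{d}{dt}D(t) \leq C'\,D(t)$, and Grönwall together with $D(s) = W_1(\mu_1(s),\mu_2(s))$ gives the claimed exponential estimate after relabelling the constant. Uniqueness in item \ref{item:QuasiStaticContinuous} is then the special case $\bar\mu_1 = \bar\mu_2$.

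Finally, for existence in item \ref{item:QuasiStaticContinuous} I would approximate $\bar\mu \in \prob(\dball{\bar R})$ by empirical measures $\bar\mu^N \to \bar\mu$ in $W_1$, solve the discrete system via item \ref{item:QuasiStaticDiscrete} to obtain curves $\mu^N$, and apply the stability estimate just established, which holds between any two solutions and in particular the empirical ones, to get $W_1(\mu^N(t),\mu^M(t)) \leq e^{C t}\,W_1(\bar\mu^N,\bar\mu^M)$. Hence $(\mu^N)$ is Cauchy in $C([0,T];(\prob(\R^d),W_1))$ and converges to a limit $\mu$, supported in $\overline{\dball{R}}$ since every $\mu^N(t)$ is. Passing to the limit in the weak formulation is justified by the Lipschitz dependence of $v^J(\nu)$ on $\nu$ in $W_1$ from Lemma \ref{lemma:UniformSigma}, and the whole scheme is the standard mean-field construction of \cite{Carrillo2011, Carrillo2014}. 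The main obstacle is the stability step: one must ensure the characteristic flows are genuinely well defined and that each solution coincides with the pushforward of its initial datum along its own flow. Both rest on the spatial Lipschitz continuity of $v^J$ supplied by Lemma \ref{lemma:UniformSigma}, so the genuine analytic difficulty — the $W_1$-Lipschitz dependence of the softmax equilibrium on the particle distribution — has already been absorbed into the preceding lemmas, leaving the remainder essentially a bookkeeping of Grönwall and $W_1$-couplings.
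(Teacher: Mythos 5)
Your proposal is correct, and parts of it coincide with the paper's proof (the discrete well-posedness via Lemma \ref{lemma:LipschitzVelocityMap}, the velocity bound $\|v_i^J\|\leq\|e\|_\infty$ for confinement, and the verification that the empirical measure solves \eqref{eq:ContEqR}), but your treatment of the mean-field items follows a genuinely different route. The paper proves existence in item \ref{item:QuasiStaticContinuous} by compactness: the empirical solutions $(\mu^N)_N$ are equibounded and equicontinuous, Ascoli--Arzel\`a extracts a cluster point, and Lemma \ref{lemma:UniformSigma} passes the weak formulation to the limit; stability (item \ref{item:QuasiStaticStability}) is then obtained first for empirical measures from the discrete Gr\"onwall estimate together with the fact that $\|\cdot\|_N$ dominates $W_1$, and extended to arbitrary measures by a density argument borrowed from the proof of Theorem 2.4 in \cite{BFHM16}; uniqueness falls out at the very end. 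You instead prove stability directly at the continuum level by a Dobrushin coupling: freeze the velocity field $w_k(t,\cdot)=v^J(\mu_k(t))(\cdot)$ of each solution, represent $\mu_k(t)$ as the push-forward of $\mu_k(s)$ along the flow of $w_k$, transport an optimal coupling, and run Gr\"onwall on $D(t)=\int\|x-y\|\,\diff\gamma_t$ using Lemma \ref{lemma:UniformSigma} with $J'=J$ and $W_1\leq D$. With stability in hand, existence becomes a completeness argument: the empirical solutions form a Cauchy sequence in $C([0,T];(\prob(\dball{R}),W_1))$, so no subsequence extraction is needed, and uniqueness is the special case $\bar\mu_1=\bar\mu_2$. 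What your route buys is a cleaner logical structure (stability does all the work, compactness is never invoked) and an explicit continuum estimate rather than one inherited from the discrete level by density; what it costs is the additional ingredient you correctly flag as the main obstacle, namely the representation of an arbitrary weak solution of \eqref{eq:ContEqR} as the characteristic flow of its own (frozen, spatially Lipschitz, bounded, time-continuous) velocity field --- a standard uniqueness result for linear continuity equations, but one that should be cited explicitly (e.g.\ the superposition principle or the corresponding statement in \cite{Carrillo2011}), whereas the paper's two-step argument sidesteps it by working with ODE flows only at the discrete level.
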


\begin{proof}[Proof outline]
	The proof is rather standard and builds on the Lipschitz continuity of the finite agents system \eqref{eq:QuasiStaticGameModel}. We highlight the main steps without in-depth details and refer to Proposition 2.1 and Theorem 2.4 of \cite{BFHM16}, and references therein, for further information.
	
	\emph{Part 1: finite agents setting.} For given $\bar{\mathbf{x}}^N = (\bar{x}_{1}^N,\dots, \bar{x}_{N}^N) \in [\dball{\bar{R}}]^N$, using the Lipschitz continuity provided in Lemma \ref{lemma:LipschitzVelocityMap}, there exists a unique curve $\mathbf{x}^N \colon [0,T] \to [\R^d]^N$ of class $C^1$ solving \eqref{eq:QuasiStaticGameModel} with $\mathbf{x}^N(0) = \bar{\mathbf{x}}^N$.
	A direct estimate provides $\|\partial x_i^N(t)\| \leq \|e\|_\infty$, so that $\|x_i^N(t)\| \leq \bar{R} + T \cdot \|e\|_\infty$ for every $t \in [0,T]$, $i = 1,\dots,N$. Furthermore, setting $\mu^N(t) \assign \tfrac{1}{N} \sum_{i=1}^N \delta_{x^N_i(t)}$ one can explicitly verify that $\mu^N$ solves \eqref{eq:ContEqR} with initial condition $\bar{\mu}^N \assign	\tfrac{1}{N} \sum_{i=1}^N \delta_{\bar{x}^N_i}$. This establishes point \ref{item:QuasiStaticDiscrete}.
	
	\emph{Part 2: mean-field solution as limit of finite agents solutions.}
	Fix $\bar{\mu} \in \prob(\dball{\bar{R}})$. For $N > 0$, let $(\bar{\mu}^N)_N \subset \prob(\dball{\bar{R}})$ be a sequence of empirical measures such that $W_1(\bar{\mu}^N,\bar{\mu}) \to 0$ as $N \to \infty$ and let $(\mu^N)_N$ be the respective sequence of solutions of \eqref{eq:ContEqR} with initial conditions $\bar{\mu}^N$. This sequence of curves $(\mu^N)_{N \in \N} \subset C([0,T]; (\prob(\dball{R}), W_1))$ is equicontinuous and equibounded. Therefore, an application of the Ascoli--Arzelà theorem provides a cluster point $\mu \in C([0,T]; (\prob(\dball{R}), W_1))$ such that, up to a subsequence, we have
	\[
	\lim_{N\to\infty} W_1(\mu^N(t), \mu(t)) = 0 \quad \text{ uniformly for } t \in [0,T].
	\]
	Invoking Lemma \ref{lemma:UniformSigma} for $x=x'$ and $J=J'$ this implies
	$\| v^J(\mu^N(t))(x) - v^J(\mu(t))(x)\| \to 0$ uniformly in $t \in [0,T]$, $x \in \dball{R}$ as $N \to \infty$. Consequently, the cluster point $\mu$ is a solution to \eqref{eq:ContEqR} with $\bar{\mu}$ as initial condition. This establishes the existence part of point \ref{item:QuasiStaticContinuous}. 
	
	\emph{Part 3: stability estimates.}
	For fixed $N$ a stability estimate of the form $\|\mathbf{x}^N_1(t)-\mathbf{x}	^N_2(t)\|_N \leq \exp(C\,(t-s)) \cdot \|\mathbf{x}^N_1(s)-\mathbf{x}	^N_2(s)\|_N$ for solutions to the discrete system \eqref{eq:QuasiStaticGameModel} follows quickly from Grönwall's lemma. Since the $\|\cdot\|_N$-norm between two point clouds provides an upper bound for the $W_1$ distance between the respective empirical measures this would provide point \ref{item:QuasiStaticStability} for empirical measures.
	The extension to arbitrary measures can be done as in the proof of Theorem 2.4 in \cite{BFHM16}. This then provides uniqueness of the solution $\mu$ to \eqref{eq:ContEqR} for initial condition $\bar{\mu} \in \prob(\dball{\bar{R}})$, completing point \ref{item:QuasiStaticContinuous}.
\end{proof}

Finally, we establish convergence of \eqref{eq:NonTelepathicGameModel} to \eqref{eq:QuasiStaticGameModel} (and their respective mean-field versions) as $\lambda \to \infty$. The proof is given in Section \ref{sec:NonTelepathicFastReactionProofs} of the Appendix, a simple numerical example is shown in Figure \ref{fig:forwardlambdatoinfinity}.
\begin{theorem}[Convergence to fast-reaction limit in undisclosed setting as $\lambda \to \infty$]\hfill
	\label{thm:LambdaQuasiStaticConvergence}
	\begin{enumerate}
	\item \textbf{Discrete setting:} For initial positions $\mathbf{x}(0)=(x_1(0),\ldots,x_N(0)) \in [\dball{\bar{R}}]^N$ and initial mixed strategies $\bm{\sigma}(0)=(\sigma_1(0),\ldots,\sigma_N(0)) \in \sigmaSet^N$ let $\mathbf{x}(t) = (x_1(t),\ldots,x_N(t))$ and $\bm{\sigma}(t)=(\sigma_1(t),\ldots,\sigma_N(t))$ be the solution to the undisclosed model \eqref{eq:NonTelepathicGameModel}.
	For the same initial positions let $\mathbf{x}^{\ast \ast}(t)=(x^{\ast \ast}_1(t),\ldots,x^{\ast \ast}_N(t))$ be the solution to the undisclosed fast-reaction model \eqref{eq:QuasiStaticGameModel} and let $\bm{\sigma}^{\ast \ast}(t)=(\sigma_1^{\ast \ast}(t),\ldots,\sigma_N^{\ast \ast}(t))$ with $\sigma_i^{\ast \ast}(t)=\sigma^J_{i}(\mathbf{x}^{\ast \ast}(t))$ be the corresponding fast-reaction strategies.
	
	Then, there exists $C = C(a,b,\bar{R},J,e)$ (independent of $N$ and $i$) such that for all $t \in [0,\infty)$,
	\begin{align}
		\label{eq:QuasiStaticConvergenceX}
		\|\mathbf{x}(t)-\mathbf{x}^{\ast \ast}(t)\|_N \leq \frac{C}{\sqrt{\lambda}} \cdot \exp(t \cdot C)
	\end{align}
	and
	\begin{align}
		\label{eq:QuasiStaticConvergenceSigma}
		\|\sigma_i(t)-\sigma_i^{\ast\ast}(t)\|_{L^2_\eta(U)} \leq \frac{C}{\sqrt{\lambda}} \cdot \exp(t \cdot C) + 
			\left[C \left[\tfrac{1}{\lambda}+ \exp\left(-\tfrac{\lambda\,t}{C}\right)\right]\right]^{1/2}.
	\end{align}
	So on any compact time horizon $[0,T]$ we find that $\|\mathbf{x}(t) - \mathbf{x}^{\ast \ast}(t)\|_N \to 0$ uniformly in time as $\lambda \to \infty$.
	$\bm{\sigma}$ converges to $\bm{\sigma}^{\ast \ast}$ uniformly in time on the interval $[\tau,T]$ for any $\tau>0$. Near $t=0$ we cannot expect uniform convergence of $\bm{\sigma}$, since by initialization it can start at a finite distance from $\bm{\sigma}^{\ast \ast}(0)$ and thus it takes a brief moment to relax to the vicinity of the fast-reaction state.
	\item \textbf{Mean-field setting:}
	For an initial configuration $\bar{\Sigma} \in \prob(\fullSpace)$ let $\Sigma$ be the solution to the entropic mean-field model \eqref{eq:ContEqY} for a undisclosed $J$ with $\lambda \in (0,\infty)$.
	Let $\proj : \fullSpace \to \R^d$, $(x, \sigma) \mapsto x$ be the projection from $\fullSpace$ to the spatial component.
	Set $\bar{\mu} \assign \proj_\sharp \bar{\Sigma}$ and let $\mu$ be the solution to the fast-reaction mean-field model \eqref{eq:ContEqR} with initial condition $\mu(0)=\bar{\mu}$. Then for the same $C$ as in \eqref{eq:QuasiStaticConvergenceX} one has
	\begin{align}
		\label{eq:QuasiStaticConvergenceMeanField}
		W_1(\mu(t),\proj_\sharp \Sigma(t)) \leq \frac{C}{\sqrt{\lambda}} \cdot \exp(t \cdot C).
	\end{align}
	\end{enumerate}
\end{theorem}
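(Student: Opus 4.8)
The plan is to treat the discrete statement as the core of the argument and to obtain the mean-field estimate \eqref{eq:QuasiStaticConvergenceMeanField} afterwards, by empirical approximation combined with the two well-posedness theorems already available. For the discrete part I would first record that, for frozen positions $\mathbf{x}$, the bracket on the right-hand side of \eqref{eq:NonTelepathicGameModelSigma} is (up to the factor $\lambda$) the Hellinger--Kakutani gradient flow of the free energy
\begin{align*}
\mathcal{F}_i(\sigma) \assign -\frac1N\sum_{j=1}^N \int_U J(x_i,\cdot,x_j)\,\sigma\,\diff\eta + \veps \int_U \sigma\log\sigma\,\diff\eta,
\end{align*}
whose unique minimizer over $\sigmaSet$ is exactly the Gibbs density $\sigma_i^J(\mathbf{x})$ of \eqref{eq:QuasiStaticGameModelSigma}: the first-order condition $\veps\log\sigma = \tfrac1N\sum_j J(x_i,\cdot,x_j) + \tn{const}$ yields the claimed formula, which both verifies the steady state announced in the theorem and shows the relevant computation. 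I would then split the strategy error as
\begin{align*}
\sigma_i(t) - \sigma_i^{\ast\ast}(t) = \underbrace{\big(\sigma_i(t) - \sigma_i^J(\mathbf{x}(t))\big)}_{\tn{relaxation } \rho_i(t)} + \big(\sigma_i^J(\mathbf{x}(t)) - \sigma_i^J(\mathbf{x}^{\ast\ast}(t))\big),
\end{align*}
where the second, transport, term is controlled by $\|x_i-x_i^{\ast\ast}\|$ and by the $W_1$-distance of the empirical position measures through the Lipschitz estimate of Lemma \ref{lemma:UniformSigma}.

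The heart of the proof is the relaxation estimate for $\rho_i$, which I would measure via the energy gap $\Delta_i(t) \assign \veps\,\KL(\sigma_i(t)|\sigma_i^J(\mathbf{x}(t)))$, equal to $\mathcal{F}_i(\sigma_i(t)) - \min_{\sigmaSet}\mathcal{F}_i$ for the energy attached to the current positions $\mathbf{x}(t)$. Differentiating in time gives two contributions: the gradient-flow dissipation, which by the explicit computation equals $-\lambda$ times a $\sigma_i$-weighted variance of $\log(\sigma_i/\sigma_i^J)$, and a moving-target term coming from the time dependence of $\mathbf{x}(t)$. For the first I would invoke an entropy--variance (gradient-dominance) inequality, valid because all densities are confined to $[a,b]$, to bound the dissipation below by $\tfrac{\lambda}{C}\Delta_i$; for the second I would use $\|\partial_t x_i\|\leq\|e\|_\infty$ together with the smoothness of $\mathbf{x}\mapsto\sigma_i^J(\mathbf{x})$ to bound it by a constant $C = C(a,b,\bar{R},J,e)$. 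Together these produce
\begin{align*}
\tfrac{\diff}{\diff t}\Delta_i(t) \leq -\tfrac{\lambda}{C}\,\Delta_i(t) + C,
\end{align*}
whence $\Delta_i(t)\leq C(\tfrac1\lambda + e^{-\lambda t/C})$ by Grönwall. Since on $[a,b]$ the squared $L^2_\eta(U)$ distance is comparable to the Kullback--Leibler divergence, this gives $\|\rho_i(t)\|_{L^2_\eta(U)}\leq[C(\tfrac1\lambda+e^{-\lambda t/C})]^{1/2}$, precisely the transient term of \eqref{eq:QuasiStaticConvergenceSigma}.

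It remains to close the loop on positions. Writing $E(t)\assign\|\mathbf{x}(t)-\mathbf{x}^{\ast\ast}(t)\|_N$ and comparing \eqref{eq:NonTelepathicGameModelX} with \eqref{eq:QuasiStaticGameModelX}, the Lipschitz continuity of $e$ and of $\sigma_i^J$ gives
\begin{align*}
\tfrac{\diff}{\diff t}E(t) \leq C\,E(t) + C\cdot\tfrac1N\sum_{i=1}^N\|\rho_i(t)\|_{L^2_\eta(U)}.
\end{align*}
Because $\int_0^t[\tfrac1\lambda+e^{-\lambda s/C}]^{1/2}\,\diff s \lesssim \tfrac{t}{\sqrt\lambda}+\tfrac1\lambda$, Grönwall's lemma yields $E(t)\leq\tfrac{C}{\sqrt\lambda}e^{tC}$ (absorbing the linear-in-$t$ prefactor into the exponential), which is \eqref{eq:QuasiStaticConvergenceX}; running the same inequality for a single index bounds $\|x_i-x_i^{\ast\ast}\|$ individually by the same quantity. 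Feeding these two position bounds and the relaxation bound into the triangle inequality for $\sigma_i-\sigma_i^{\ast\ast}$ (again via Lemma \ref{lemma:UniformSigma}) gives \eqref{eq:QuasiStaticConvergenceSigma}. For the mean-field statement I would approximate $\bar{\Sigma}$ by empirical measures $\bar{\Sigma}^N$, run the discrete $\lambda$-dynamics and the discrete fast-reaction dynamics from the same initial positions, and interpolate
\begin{align*}
W_1(\mu(t),\proj_\sharp\Sigma(t)) \leq W_1(\mu(t),\mu^{\ast\ast,N}(t)) + W_1(\mu^{\ast\ast,N}(t),\proj_\sharp\Sigma^N(t)) + W_1(\proj_\sharp\Sigma^N(t),\proj_\sharp\Sigma(t)),
\end{align*}
where the two outer terms vanish as $N\to\infty$ by Theorems \ref{thm:wellPosedFastReaction} and \ref{thm:wellPosedEntropic} (using that $\proj$ is $1$-Lipschitz, so pushforward is $W_1$-nonexpansive), while the middle term is $\leq\|\mathbf{x}(t)-\mathbf{x}^{\ast\ast}(t)\|_N\leq\tfrac{C}{\sqrt\lambda}e^{tC}$ by \eqref{eq:QuasiStaticConvergenceX}, uniformly in $N$. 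I expect the main obstacle to be the relaxation estimate, specifically establishing the entropy--variance inequality with a constant uniform over the moving family of Gibbs targets $\sigma_i^J(\mathbf{x}(t))$ and controlling the moving-target term; the confinement $\sigma_i(t)\in[a,b]$, guaranteed by the compatibility condition, is precisely what makes all these constants uniform in $N$ and $i$.
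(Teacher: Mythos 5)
Your proposal is correct and follows essentially the same route as the paper: the free-energy gap you call $\Delta_i = \veps\,\KL\big(\sigma_i|\sigma_i^J(\mathbf{x})\big)$ is exactly the paper's gap $G_i(t,\sigma_i(t))-G_i(t,\sigma_i^\ast(t))$, your entropy--variance (gradient-dominance) inequality is the paper's strong-convexity estimate made uniform by the $[a,b]$ confinement, the moving-target control via $\|\partial_t x_i\|\leq\|e\|_\infty$ and the coupled Gr\"onwall argument for positions match the paper's Parts 1--2, and the mean-field part coincides with the paper's empirical-approximation argument using the two stability theorems and the $1$-Lipschitz projection. The only deviations are cosmetic: you integrate the transient term $\big[\tfrac1\lambda+e^{-\lambda s/C}\big]^{1/2}$ directly where the paper splits time at $\tau=1/\sqrt{\lambda}$ before applying Gr\"onwall, and you are slightly more explicit about the per-agent position bound needed to pass from the averaged estimate \eqref{eq:QuasiStaticConvergenceX} to the individual strategy bound \eqref{eq:QuasiStaticConvergenceSigma}.
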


\begin{figure}
	\centering
	\includegraphics{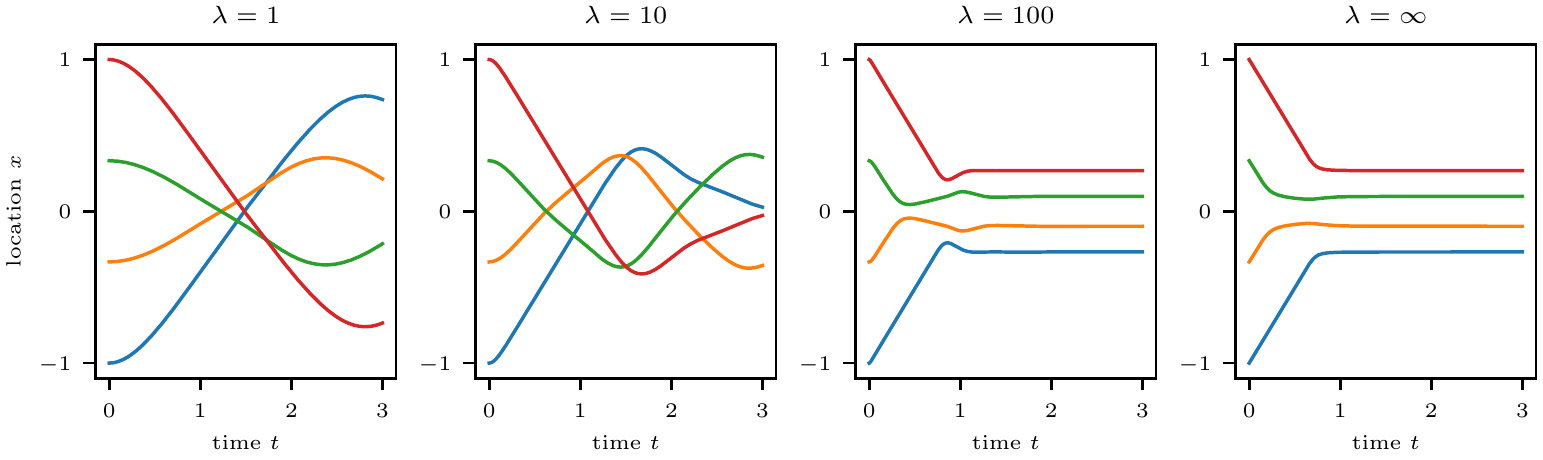}
	\caption{Convergence of the undisclosed model to the fast-reaction limit as $\lambda \to \infty$. The payoff function encourages agents to move to the origin, but penalizes small pairwise distances (see Example \ref{ex:NTFRd1} for a detailed description). With $\lambda$ small, agents cannot adjust their strategies fast enough. They overshoot the origin and completely fail to avoid each other. The situation improves as $\lambda$ increases. For $\lambda=100$ the model closely resembles the fast-reaction limit, in accordance with \eqref{eq:QuasiStaticConvergenceX}.}
	\label{fig:forwardlambdatoinfinity}
\end{figure}

\begin{remark}
The statement about the mean-field equations can be expanded further: From the fast-reaction solution $\mu$ on $\prob(\R^d)$ one can construct a `lifted trajectory' $\hat{\Sigma}$ on $\prob(\fullSpace)$, intuitively by attaching to each mass particle in $\mu$ at $x$ its corresponding fast-reaction mixed strategy $\sigma^J(\mu)(x,\cdot)$, \eqref{eq:MeanFieldSigma}.
Using the bounds \eqref{eq:QuasiStaticConvergenceX} and \eqref{eq:QuasiStaticConvergenceSigma}, the continuity properties of $\sigma^J(\mu)(x,\cdot)$ (see Lemma \ref{lemma:UniformSigma}) and arguing as in the proof of \eqref{eq:QuasiStaticConvergenceMeanField} one can then establish a $W_1$-bound between $\Sigma$ and $\hat{\Sigma}$.
\end{remark}

\section{Inference for entropic evolutionary games}
\label{sec:Inference}
After having introduced a new class of models for interacting agents, we now turn to the question of how the payoff function $J$, that parametrizes a model, can be inferred from data.
Some remarks are in order before we proceed.

\subsection{Discussion}
\label{sec:InferenceDiscussion}
Our motivation is as follows: we observe a set of rational agents, e.g.,~pedestrians in a confined space, and want to learn about their interaction rules.
As argued above (e.g.~in Examples \ref{ex:FastReactionVsNewton} and \ref{ex:FastReactionVsNewtonII}), undisclosed fast-reaction entropic games can be used to parametrize a rich class of interaction behaviours, in particular subsuming Newtonian models.
We may therefore hope that, when feeding our observations into an inference functional for $J$, that by analysing and interpreting the resulting payoff function we can learn something about the interaction rules of the agents.
We focus on the inference of $J$ and assume that $U$, $e$ and $\veps$ are known. We will demonstrate during the numerical examples (Section \ref{sec:Numerics}) that it is usually possible to make plausible choices for $U$, $e$ and that qualitatively equivalent choices yield qualitatively equivalent results for the inferred $J$. One may also assume that $\veps=1$, since re-scaling $\veps$ can be compensated by re-scaling $J$ in the same way, see \eqref{eq:QuasiStaticGameModel}.

In Section \ref{sec:InferenceDifferential} we discuss a differential inference functional, proposed in analogy to \eqref{eq:NewtonEnergy}, for the full entropic game model, i.e.~without the undisclosed assumption and not in the fast-reaction limit.
For this we assume that we are able to observe the full dynamics, i.e.~physical locations and velocities of the agents, as well as their mixed strategies and temporal derivatives, $(x_i,\partial_t x_i,\sigma_i,\partial_t \sigma_i)_{i=1}^N$ and propose a functional that seeks to infer $J$ by comparing its predictions with the observed $\partial_t \sigma_i$.

In many cases it may not be possible to observe the mixed strategies $\sigma_i$, let alone their temporal derivatives $\partial_t \sigma_i$, since these may correspond to internal states of the agents that we, as external observers, cannot perceive.
In Section \ref{sec:InferenceNTFR} we turn to the undisclosed fast-reaction setting, where physical velocities and mixed strategies are functions of current physical locations, and show that there one can still perform inference with limited observations.
In Section \ref{sec:InferenceNTFRVel} we propose an inference functional, again in analogy to \eqref{eq:NewtonEnergy}, that only requires the observation of the physical locations $x_i$ and velocities $\partial_t x_i$. Although it is non-convex, we observe in our numerical examples that it provides plausible payoff functions $J$, indicating that non-convexity does not seem to be a practical issue.

In Section \ref{sec:InferenceNTFRSigma} we propose a convex inference functional that requires knowledge of the mixed strategies $\sigma_i$, but not of their temporal derivatives.
This may be practical in two cases:
In Section \ref{sec:NumericsSetup} we propose a scheme to provide such data from observed physical velocities, intuitively by `inverting' the approximation scheme of Example \ref{ex:FastReactionVsNewton}. Also, mixed strategies could represent a stochastic movement of agents and the `smooth' physical velocity $\partial_t x_i$ is merely the result of an averaging over a smaller time scale. Should we indeed be able to observe the fluctuating agent dynamics at small time scales, this could be used to approximate the mixed strategies. This was one of our motivations for providing Example \ref{ex:NTFRNoise}.

For both functionals there are natural candidates for the mean-field limit.
We analyse these two functionals in more detail in Section \ref{sec:InferenceNTFRTheory}, providing an estimate of the approximation quality (Theorem \ref{thm:BoundOnTrajectories}), as well as existence of minimizers and consistency in the mean-field limit (Theorem \ref{thm:Main}).

Now, we formalize our notion of admissible observations for inference and a notion of consistency of observations in the mean-field limit.
\begin{assumption}[Admissible observations]\label{ass:Observations}\hfill
\begin{enumerate}
	\item \emph{Discrete observation:} For a fixed number of agents $N$, a time horizon $T \in (0,\infty)$, and some radius $R \in (0,\infty)$ we observe the agents' physical paths $\bm{x}^N := (x^N_1,\ldots,x^N_N) \in C^1([0,T],B^d(R))^N$ with velocities $\bm{v}^N := (v^N_1,\ldots,v^N_N)$, $v^N_i = \partial_t x^N_i$.

Optionally, in addition we may also observe the agents' mixed strategies $\bm{\sigma}^N := (\sigma^N_1,\ldots,\allowbreak\sigma^N_N)\allowbreak \in C([0,T],\sigmaSet)^N$ for some bounds $0<a\leq b<\infty$ in the definition of $\sigmaSet$, see \eqref{eq:SigmaSet}. The mixed strategies are consistent with the observed velocities, i.e.
\begin{equation}
\label{eq:ObservedStrategiesConsistent}
v_i^N(t) = \int_U e(x_i^N(t),u)\sigma_i^N(t)(u)\,\diff \eta(u).
\end{equation}
\emph{Note:} The assumption that the agent's velocity is exactly consistent with their mixed strategy may seem unrealistic, as the velocity may be subject to other external influences and noise. However, we will subsequently often assume that the mixed strategies are not observed directly, but are only inferred from the velocities. In this case, satisfying assumption \eqref{eq:ObservedStrategiesConsistent} is quite natural. In other cases, assumption \eqref{eq:ObservedStrategiesConsistent} will be used in Theorem \ref{thm:BoundOnTrajectories} to bound the error on trajectories based on the error on strategies. If \eqref{eq:ObservedStrategiesConsistent} only holds approximately, an additional corresponding error term will appear there.
\label{item:ObservationsDiscrete}
\item \emph{Consistent mean-field behaviour:} For fixed $T$,$R$, for an increasing sequence of $N$ we make observations as specified in part \ref{item:ObservationsDiscrete}, and there is a limit observation $t \mapsto \mu^\infty(t) \in \prob(B^d(R))$ with a velocity field $t \mapsto v^\infty(t)$ such that
\begin{align}
	\label{eq:ObservationsMuUniform}
	W_1(\mu^N(t),\mu^\infty(t)) \rightarrow 0 \quad \tn{uniformly in $t \in [0,T]$ where}\quad \mu^N(t) \assign \frac1N \sum_{i=1}^N \delta_{x^N_i(t)},
\end{align}
\begin{align}
	\label{eq:ObservationsVWeak}
	\int_0^T \frac{1}{N} \sum_{i=1}^N \la \phi(t,x^N_i(t)), v^N_i(t) \ra \,\diff t
	\rightarrow \int_0^T \int_{\R^d} \la \phi(t,x) , v^\infty(t)(x) \ra \,\diff \mu^\infty(t)(x)\,\diff t
\end{align}
for all $\phi \in [C([0,T] \times \R^d)]^d$, and
\begin{align}
	\label{eq:ObservationsVConvex}
	\int_0^T \frac{1}{N} \sum_{i=1}^N \|v^N_i(t)\|^2 \,\diff t
	\rightarrow \int_0^T \int_{\R^d} \|v^\infty(t)(x)\|^2 \,\diff \mu^\infty(t)(x)\,\diff t<\infty.
\end{align}
\eqref{eq:ObservationsMuUniform} implies that physical locations are consistent, \eqref{eq:ObservationsVWeak} implies that observed velocities converge in a weak sense and \eqref{eq:ObservationsVConvex} implies that they are consistent. Intuitively, for \eqref{eq:ObservationsVConvex} to hold, mass particles that converge to the same limit location $x$ as $N \to \infty$ need to converge to the same velocity, otherwise Jensen's strict inequality contradicts \eqref{eq:ObservationsVConvex}.

If we also observe mixed strategies in part \ref{item:ObservationsDiscrete}, then the bounds $0<a \leq b<\infty$ are uniform in $N$ and there also is a mixed strategy mean-field $(t,x) \mapsto \sigma^\infty(t)(x) \in \sigmaSet$ such that
\begin{multline}
	\label{eq:ObservationsSigmaWeak}
	\int_0^T \frac{1}{N} \sum_{i=1}^N \int_U \phi(t,x^N_i(t),u) \cdot \sigma^N_i(t)(u)\, \diff \eta(u) \,\diff t \\
	\rightarrow \int_0^T \int_{\R^d} \int_U \phi(t,x,u) \cdot \sigma^\infty(t)(x)(u) \diff \eta(u)\,\diff \mu^\infty(t)(x)\,\diff t
\end{multline}
for all $\phi \in [C([0,T] \times \R^d \times U)]^d$ and
\begin{multline}
	\label{eq:ObservationsSigmaConvex}
	\int_0^T \frac{1}{N} \sum_{i=1}^N \int_U \sigma^N_i(t)(u)\log(\sigma^N_i(t)(u))\, \diff \eta(u) \,\diff t \\
	\rightarrow \int_0^T \int_{\R^d} \int_U \sigma^\infty(t)(x)(u)\log(\sigma^\infty(t)(x)(u)) \diff \eta(u)\,\diff \mu^\infty(t)(x)\,\diff t.
\end{multline}
These are direct equivalents of \eqref{eq:ObservationsVWeak} and \eqref{eq:ObservationsVConvex}.
\label{item:ObservationsLimit}
\end{enumerate}
\end{assumption}

In particular, observations are admissible when they were generated by an entropic game model with some ground-truth payoff $J$.
\begin{lemma}\label{lemma:ExplicitForward}
Let $J \in X$, $T > 0$, $\bar{R}>0$, $N \in \N$.
	\begin{enumerate}
	\item For initial locations $\bar{\bm{x}}^N=(\bar{x}^N_1,\ldots,\bar{x}^N_N) \in [B^d(\bar{R})]^N$ the induced solution to \eqref{eq:QuasiStaticGameModel} with corresponding velocities and mixed strategies provides an admissible discrete observation in the sense of Assumption \ref{ass:Observations}, part \ref{item:ObservationsDiscrete}, for $R=\bar{R}+\|e\|_\infty \cdot T$.

	\item Let $\bar{\mu} \in \prob(B^d(\bar{R}))$ and consider a sequence of empirical measures $(\bar{\mu}^N)_N$ in $\prob(B^d(\bar{R}))$ of the form
	\[
	\bar{\mu}^N = \frac1N \sum_{i=1}^N \delta_{\bar{x}_{i}^N}, \quad \bar{x}_{i}^N \in B^d(\bar{R}), \quad \text{such that } \lim_{N\to \infty}W_1(\bar{\mu}^N, \bar{\mu}) = 0.
	\]
	Then the solutions to \eqref{eq:QuasiStaticGameModel} with initial positions $(\bar{x}_1^N, \dots, \bar{x}_N^N)$ are a suitable sequence of discrete observations in the sense of Assumption \ref{ass:Observations}, part \ref{item:ObservationsLimit}, and the solution to \eqref{eq:ContEqR} provides a corresponding limit observation.
	\end{enumerate}
\end{lemma}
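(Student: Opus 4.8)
The plan is to reduce both parts to the well-posedness theory of Theorem~\ref{thm:wellPosedFastReaction} together with the uniform Lipschitz and boundedness estimates of Lemma~\ref{lemma:UniformSigma}, exploiting throughout the key identity that the discrete velocities and strategies are exactly the mean-field objects evaluated at the empirical measure.

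Part~1 is essentially a matter of matching definitions. By Theorem~\ref{thm:wellPosedFastReaction}, item~\ref{item:QuasiStaticDiscrete}, the initial datum $\bar{\mathbf{x}}^N \in [\dball{\bar R}]^N$ induces a unique $C^1$ trajectory $\mathbf{x}^N \colon [0,T] \to [\dball{R}]^N$ solving \eqref{eq:QuasiStaticGameModelX} with $R = \bar R + T\|e\|_\infty$, so the physical paths and their velocities $v_i^N = \partial_t x_i^N = v_i^J(\mathbf{x}^N)$ are admissible in the sense of Assumption~\ref{ass:Observations}, item~\ref{item:ObservationsDiscrete}. I then set $\sigma_i^N(t) \assign \sigma_i^J(\mathbf{x}^N(t))$ as in \eqref{eq:QuasiStaticGameModelSigma}; these are continuous in $t$ because $\mathbf{x}^N$ is $C^1$ and, by Lemma~\ref{lemma:UniformSigma}, $\sigma^J$ is Lipschitz in its spatial argument and its measure argument. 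The same lemma gives uniform bounds $1/C < \sigma_i^N(t)(u) < C$, so $\sigma_i^N(t) \in \sigmaSet$ with $a = 1/C$, $b = C$ (it is a normalized density by construction). Finally, consistency \eqref{eq:ObservedStrategiesConsistent} is nothing but the definition \eqref{eq:QuasiStaticGameModelV} of $v_i^J$.

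For Part~2 I propose the limit objects $\mu^\infty \assign \mu$, the solution of \eqref{eq:ContEqR} with datum $\bar\mu$, together with $v^\infty(t) \assign v^J(\mu(t))$ and $\sigma^\infty(t) \assign \sigma^J(\mu(t))$ from \eqref{eq:MeanFieldVel}--\eqref{eq:MeanFieldSigma}. The stability estimate of Theorem~\ref{thm:wellPosedFastReaction}, item~\ref{item:QuasiStaticStability}, applied at $s=0$ gives $W_1(\mu^N(t),\mu(t)) \leq \exp(CT)\,W_1(\bar\mu^N,\bar\mu) \to 0$ uniformly in $t$, which is exactly \eqref{eq:ObservationsMuUniform}. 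The decisive observation is that, via the relations following \eqref{eq:MeanFieldSigma}, one has $v_i^N(t) = v^J(\mu^N(t))(x_i^N(t))$ and $\sigma_i^N(t) = \sigma^J(\mu^N(t))(x_i^N(t),\cdot)$, so every empirical average $\tfrac1N\sum_i$ in \eqref{eq:ObservationsVWeak}--\eqref{eq:ObservationsSigmaConvex} becomes an integral of a function of $x$ against $\mu^N(t)$. Each of the four convergences then follows from one common splitting argument, carried out pointwise in $t$ and integrated over $[0,T]$ by dominated convergence (all integrands being uniformly bounded). Writing $\Phi^N(t,x)$ and $\Phi(t,x)$ for the relevant integrand built from $\mu^N(t)$ respectively $\mu(t)$, I decompose $\int\Phi^N\,d\mu^N - \int\Phi\,d\mu = \int(\Phi^N - \Phi)\,d\mu^N + (\int\Phi\,d\mu^N - \int\Phi\,d\mu)$: the first term is bounded by $C\,W_1(\mu^N(t),\mu(t))$ via Lemma~\ref{lemma:UniformSigma}, and the second vanishes because $\Phi(t,\cdot)$ is continuous and bounded on $\dball R$ while $\mu^N(t) \to \mu(t)$ in $W_1$, which on a fixed compact set is weak convergence tested against $C_b$. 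This directly yields \eqref{eq:ObservationsVWeak} and \eqref{eq:ObservationsSigmaWeak}.

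The only genuinely nonlinear points are the ``convex'' convergences \eqref{eq:ObservationsVConvex} and \eqref{eq:ObservationsSigmaConvex}, and I expect these to be the main obstacle to handle cleanly. Here the integrands are $\|v^J(\mu^N(t))(x)\|^2$ and $\int_U (s\log s)\circ\sigma^J(\mu^N(t))(x,u)\,\diff\eta(u)$. The crucial structural feature that saves us is that in the fast-reaction model the velocity and strategy are \emph{deterministic} functions of position and current measure, so no Jensen-type gap can open up between the discrete and the limit energies: I handle the difference of squares via $\|a\|^2 - \|b\|^2 = \la a-b, a+b\ra$, bounded by $2\|e\|_\infty\,C\,W_1(\mu^N,\mu)$, and the entropy integrand by the Lipschitz continuity of $s \mapsto s\log s$ on the range $[1/C,C]$ furnished by Lemma~\ref{lemma:UniformSigma}. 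Feeding these into the same splitting gives the claimed limits, while finiteness in \eqref{eq:ObservationsVConvex} is immediate from $\|v^J(\mu)\| \leq \|e\|_\infty$. Invoking Theorem~\ref{thm:wellPosedFastReaction} once more to identify $\mu$ as the solution of \eqref{eq:ContEqR} completes the identification of the limit observation.
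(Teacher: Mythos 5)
Your proposal is correct and follows essentially the same route as the paper's proof: part 1 via Theorem \ref{thm:wellPosedFastReaction}, point \ref{item:QuasiStaticDiscrete}, and part 2 by taking $\mu^\infty$ to be the solution of \eqref{eq:ContEqR}, using the stability estimate to get $W_1(\mu^N(t),\mu(t))\to 0$ uniformly, the identity $v_i^N(t)=v^J(\mu^N(t))(x_i^N(t))$, $\sigma_i^N(t)=\sigma^J(\mu^N(t))(x_i^N(t),\cdot)$, and the uniform Lipschitz and boundedness estimates of Lemma \ref{lemma:UniformSigma} (plus uniform continuity of $s\mapsto s\log s$ on $[1/C,C]$) to pass to the limit in all four convergences. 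The only difference is that you spell out the splitting argument and the handling of the quadratic and entropic integrands in more detail than the paper, which leaves those steps implicit.
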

\noindent The proof is quite straightforward and provided in Section \ref{sec:InferenceProofs}.

In the following we assume that observations are admissible in the sense of Assumption \ref{ass:Observations}.

\subsection{Differential inference functional}
\label{sec:InferenceDifferential}
In this section we discuss an inference functional for payoff function $J$, in close analogy to \eqref{eq:NewtonEnergy}.
For now, we assume that in addition to $(\bm{x}^N, \bm{v}^N, \bm{\sigma}^N)$ we can even observe $\partial_t \sigma_i^N$ for all $i \in \{1,\ldots,N\}$. Our differential inference functional is therefore aimed at recovering $J$, by comparing the observed $\partial_t \sigma_i^N$ with the predictions by the model \eqref{eq:EntropicGameModelSigma}.
In \eqref{eq:NewtonEnergy} the discrepancy between observed $v_i^N$ and predicted velocities is penalized by the squared Euclidean distance.
As metric to compare the observed $\partial_t \sigma_i^N$ and the prediction by \eqref{eq:EntropicGameModelSigma} we choose the (weak) Riemannian tensor of the Hellinger--Kakutani and Fisher--Rao metrics. 
That is, we set for a base point $\sigma \in \sigmaSet$, and two tangent directions $\delta \mu, \delta \nu \in L^p_{\eta}(U)$ attached to it,
\begin{align}
	d_\sigma(\delta \mu,\delta \nu)^2 \assign \frac{1}{4} \int_U \frac{\left(\delta \mu- \delta \nu\right)^2}{\sigma}\,\diff \eta.
\end{align}
A potential inference functional for $J$ could then be:
\begin{align}
	\label{eq:GamesDifferentialEnergy}
	& \energy^{N}_{\dot{\sigma}}({J}) \assign \nonumber\\
	& \frac{1}{T} \int_0^T \left[
		\frac{1}{N} \sum_{i=1}^N d_{\sigma_i^N(t)}\left(
			\partial_t \sigma_i^N(t), \frac{1}{N} \sum_{j=1}^N f^{{J}}\big(x_i^N(t),\sigma_i^N(t),x_j^N(t),\sigma_j^N(t)\big)
			+f^\veps\big(\sigma_i^N(t)\big)
			 \right)^2
			\right] \diff t.
\end{align}
Minimization should be restricted to a sufficiently regular and compact class of ${J}$. Since $d_\sigma(\cdot,\cdot)^2$ is quadratic in its arguments and $f^{{J}}$ is linear in ${J}$, the mismatch term is again quadratic in ${J}$ and thus, restricted to a suitable class, this is a convex optimization problem.

In Section \ref{sec:NumericsDifferential} we provide a simple example where we simulate trajectories $(\bm{x}^N,\bm{\sigma}^N)$ according to the dynamics \eqref{eq:EntropicGameModel}, based on a given $J$, and then obtain a corresponding minimizer $\hat{J}$ of \eqref{eq:GamesDifferentialEnergy}. We then show that trajectories simulated with $J$ and $\hat{J}$ are close even when the initial configurations are not drawn from training data used to infer $\hat{J}$. Indeed, upon selecting a suitable functional framework, one can in principle extend the analysis in \cite{BFHM16} (and the analysis presented here below) to prove existence of minimizers, limiting behaviour as $N \to \infty$ and ability of the inferred model to generalize.

However, application to real data seems challenging, as one usually only observes $(\bm{x}^N, \bm{v}^N)$ but not $\bm{\sigma}^N$, let alone variations $\partial_t \sigma_i^N$.
To address this, we propose in the next section two new inference functionals for the undisclosed fast-reaction setting.
		
\subsection{Undisclosed fast-reaction inference functionals}
\label{sec:InferenceNTFR}

\subsubsection{Penalty on velocities}
\label{sec:InferenceNTFRVel}
The inference situation is simplified in the undisclosed fast-reaction setting since now, for fixed payoff function $J$, mixed strategies and physical velocities are a direct function of physical locations. We can therefore attempt to minimize the discrepancy between observed physical velocities and those predicted by $J$.

For a discrete admissible observation $(\bm{x}^N,\bm{v}^N)$, as in Assumption \ref{ass:Observations}, we define the inference functional on hypothetical payoff functions ${J} \in X$
\begin{subequations}
	\label{eq:energyxdot}
	\begin{align}
	\energy^{N}_{v}({J}) & \assign
		\frac{1}{T} \int_0^T \left[
			\frac{1}{N} \sum_{i=1}^N \left\|v_i^N(t)-v_i^{{J}}(x_1^N(t),\ldots,x_N^N(t)) \right\|^2 
		\right] \diff t. \label{eq:energyJNxdot}
	\intertext{The natural candidate for the limit functional is}
	\energy_v({J}) & \assign
		\frac{1}{T} \int_0^T \int_{\R^d} \left\| v^\infty(t)(x) - v^{{J}}(\mu^\infty(t))(x) \right\|^2 \diff \mu^\infty(t)(x) \diff t
		\label{eq:energyJxdot}
	\end{align}
\end{subequations}
where the couple $(\mu^\infty, v^\infty)$ is the corresponding limit of $(\bm{x}^N,\bm{v}^N)_N$ as introduced in Assumption \ref{ass:Observations}, point \ref{item:ObservationsLimit}.

This functional is intuitive and requires only the observations of $(\bm{x}^N,\bm{v}^N)$ and no observations about the mixed strategies. However, it is not convex. This could potentially lead to poor local minima, but we did not observe such problems in our numerical examples.

The motivation for studying mean-field inference functionals is twofold. First, it establishes asymptotic consistency of inference in the limit of many agents. Second, the mean-field equation yields an approximation for the expected behaviour of the finite-agent system under many repetitions with random initial conditions.
While we do not prove this approximation property the existence of a consistent mean-field inference functional is still an encouraging indicator that inference over the collection of many finite-agent samples will yield a reasonable result.

\subsubsection{Penalty on mixed strategies}
\label{sec:InferenceNTFRSigma}
In the case where information about the mixed strategies of the agents is available (see Section \ref{sec:InferenceDiscussion} for a discussion), the discrepancy between observed mixed strategies and those predicted by $J$ can be used for inference. We choose to measure this discrepancy with the Kullback--Leibler (KL) divergence.

Similar to above, for a discrete admissible observation $(\mathbf{x}^N,\mathbf{v}^N,\bm{\sigma}^N)$, we define the inference functional for hypothetical payoff functions ${J} \in X$
\begin{subequations}
	\label{eq:energysigma}
	\begin{align}
	\energy^{N}_\sigma({J}) & \assign 
		\frac{1}{T} \int_0^T \left[
			\frac{1}{N} \sum_{i=1}^N \KL\big(\sigma_i^N(t)|\sigma_i^{J}(x_1^N(t),\ldots,x_N^N(t))\big)
		\right] \diff t.
		\label{eq:energyJNsigma}
	\intertext{The natural candidate for the limit functional is}
	\energy_\sigma({J}) & \assign
		\frac{1}{T} \int_0^T \int_{\R^d}
		\KL\big(\sigma^\infty(t)(x,\cdot)|\sigma^{{J}}(\mu^\infty(t))(x,\cdot)\big)
		\diff\mu^\infty(t)(x)\, \diff t
		\nonumber \\
	& = \frac1T \int_0^T \int_{\R^d} \int_U
		\log\left( \frac{\sigma^\infty(t)(x,u)}{\sigma^{{J}}(\mu^\infty(t))(x,u)} \right)
		\sigma^\infty(t)(x,u)
		\diff \eta(u) \diff \mu^\infty(t)(x)\,\diff t.
		\label{eq:energyJsigma}
	\end{align}
\end{subequations}
where $(\mu^\infty, \sigma^\infty)$ are as introduced in Assumption \ref{ass:Observations}, point \ref{item:ObservationsLimit}.
Due to the particular structure of stationary mixed strategies, \eqref{eq:QuasiStaticGameModelSigma}, and the KL divergence, functionals \eqref{eq:energysigma} are convex on $X$ (Proposition \ref{prop:energysigmaConvex}), which is a potential advantage over \eqref{eq:energyxdot}.

\subsubsection{Analysis of the inference functionals}
\label{sec:InferenceNTFRTheory}

We now provide some theoretical results about the inference functionals of Sections \ref{sec:InferenceNTFRVel} and \ref{sec:InferenceNTFRSigma}.
The first result establishes that the obtained minimal inference functional value provides an upper bound on the accuracy of the trajectories that are simulated with the inferred $\hat{J}$ (the proof builds upon the proof of \cite[Proposition 1.1]{BFHM16} and is given in Section \ref{sec:InferenceProofs}).
\begin{theorem}\label{thm:BoundOnTrajectories}
	Let $\hat{J} \in X$ and $(\mathbf{x}^N,\mathbf{v}^N,\bm{\sigma}^N)$ be an admissible discrete observation for $N$ agents (cf. Assumption \ref{ass:Observations}, point \ref{item:ObservationsDiscrete}). Let $\hat{\mathbf{x}}^N$ be the solution of \eqref{eq:QuasiStaticGameModel} induced by $\hat{J}$ for the initial condition $\hat{\bm{x}}^N(0) = \bm{x}^N(0) = (\bar{x}_{1}^N,\dots,\bar{x}_{N}^N) \in [\R^d]^N$. Then,
	\[
	\|\mathbf{x}^N(t)-\hat{\mathbf{x}}^N(t)\|_N \leq C \sqrt{\energy^{N}_v(\hat{J})}
	\quad \text{and} \quad
	\|\mathbf{x}^N(t)-\hat{\mathbf{x}}^N(t)\|_N \leq C \sqrt{\energy^{N}_\sigma(\hat{J})}
	\]
	for all $t \in [0,T]$, with $C = C(T, \hat{J}, e, \veps)$.
	Analogously, let $(\mu^\infty, \sigma^\infty)$ as introduced in Assumption \ref{ass:Observations}, point \ref{item:ObservationsLimit} and let $\hat{\mu}$ be the solution of \eqref{eq:ContEqR} induced by $\hat{J}$ for the initial condition $\bar{\mu} = \mu^\infty(0)$. Then,
	\[
	W_1(\mu^\infty(t),\hat{\mu}(t)) \leq C \sqrt{\energy_v(\hat{J})}
	\quad \text{and} \quad
	W_1(\mu^\infty(t),\hat{\mu}(t)) \leq C \sqrt{\energy_\sigma(\hat{J})}
	\]
	for all $t \in [0,T]$ and the same constant $C$ as above.
\end{theorem}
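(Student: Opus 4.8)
The plan is to run a Grönwall estimate on the averaged distance $E(t) \assign \|\mathbf{x}^N(t)-\hat{\mathbf{x}}^N(t)\|_N$ between the observed and the simulated trajectories, then to reduce the $\energy^N_\sigma$-bound to the $\energy^N_v$-bound by Pinsker's inequality, and finally to obtain the mean-field statements by passing to the limit along the approximating sequence of empirical observations.

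First I would treat the velocity functional. Since $\hat{\mathbf{x}}^N(0)=\mathbf{x}^N(0)$, $\partial_t x_i^N = v_i^N$, and $\partial_t \hat x_i^N = v_i^{\hat J}(\hat{\mathbf{x}}^N)$, integration gives $x_i^N(t)-\hat x_i^N(t)=\int_0^t [v_i^N(s)-v_i^{\hat J}(\hat{\mathbf{x}}^N(s))]\,ds$. Adding and subtracting $v_i^{\hat J}(\mathbf{x}^N(s))$ and averaging over $i$ splits the integrand into a \emph{model-discrepancy} term $A(s)\assign \tfrac1N\sum_i \|v_i^N(s)-v_i^{\hat J}(\mathbf{x}^N(s))\|$ and a \emph{stability} term $\tfrac1N\sum_i\|v_i^{\hat J}(\mathbf{x}^N(s))-v_i^{\hat J}(\hat{\mathbf{x}}^N(s))\|$, the latter being exactly $\|\mathbf{v}^{\hat J}(\mathbf{x}^N(s))-\mathbf{v}^{\hat J}(\hat{\mathbf{x}}^N(s))\|_N\le L\,E(s)$ by Lemma \ref{lemma:LipschitzVelocityMap}, with $L=L(\hat J,e,\veps)$. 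For $A$, two applications of Cauchy--Schwarz (first over the agent index against the uniform weight $\tfrac1N$, then over $[0,T]$) yield $\int_0^t A(s)\,ds \le T\sqrt{\energy^N_v(\hat J)}$. Hence $E(t)\le T\sqrt{\energy^N_v(\hat J)}+L\int_0^t E(s)\,ds$, and Grönwall's lemma gives $E(t)\le Te^{LT}\sqrt{\energy^N_v(\hat J)}$, which is the first claim with $C=Te^{LT}=C(T,\hat J,e,\veps)$.

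For the mixed-strategy functional the only change is the estimate of $A(s)$. Using consistency \eqref{eq:ObservedStrategiesConsistent} for $v_i^N$ and the definition \eqref{eq:QuasiStaticGameModelV} of $v_i^{\hat J}$, both velocities are integrals of the \emph{same} field $e(x_i^N(s),\cdot)$ against $\sigma_i^N(s)$ respectively $\sigma_i^{\hat J}(\mathbf{x}^N(s))$, so $\|v_i^N(s)-v_i^{\hat J}(\mathbf{x}^N(s))\|\le \|e\|_\infty\,\|\sigma_i^N(s)-\sigma_i^{\hat J}(\mathbf{x}^N(s))\|_{L^1_\eta(U)}$. Pinsker's inequality $\|\sigma_1-\sigma_2\|_{L^1_\eta(U)}^2\le 2\,\KL(\sigma_1|\sigma_2)$ then bounds this by $\sqrt{2}\,\|e\|_\infty\,\KL(\sigma_i^N(s)|\sigma_i^{\hat J}(\mathbf{x}^N(s)))^{1/2}$. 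Averaging over $i$ with Jensen and integrating with Cauchy--Schwarz as before gives $\int_0^t A(s)\,ds\le \sqrt{2}\,\|e\|_\infty\,T\sqrt{\energy^N_\sigma(\hat J)}$, and the identical Grönwall step produces the second discrete estimate, with $C=\sqrt2\,\|e\|_\infty\,Te^{LT}$; taking the larger of the two constants yields a single $C$ of the asserted form.

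Finally, for the two mean-field bounds I would argue by approximation. Let $(\mu^N)_N$ be the empirical observations converging to $\mu^\infty$ (Assumption \ref{ass:Observations}, part \ref{item:ObservationsLimit}), and let $\hat{\mu}^N(t)=\tfrac1N\sum_i\delta_{\hat x_i^N(t)}$ be the empirical measure of the simulated trajectory started at $\mu^N(0)$, which solves \eqref{eq:ContEqR} by Theorem \ref{thm:wellPosedFastReaction}, part \ref{item:QuasiStaticDiscrete}. The discrete estimates give $W_1(\mu^N(t),\hat\mu^N(t))\le \|\mathbf{x}^N(t)-\hat{\mathbf{x}}^N(t)\|_N\le C\sqrt{\energy^N_v(\hat J)}$ (and likewise with $\energy^N_\sigma$), with $C$ \emph{independent of $N$}. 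Sending $N\to\infty$ through the triangle inequality $W_1(\mu^\infty,\hat\mu)\le W_1(\mu^\infty,\mu^N)+W_1(\mu^N,\hat\mu^N)+W_1(\hat\mu^N,\hat\mu)$ then closes the argument, provided (i) $W_1(\mu^N(t),\mu^\infty(t))\to0$ (given), (ii) $W_1(\hat\mu^N(t),\hat\mu(t))\to0$, which follows from the $W_1$-stability of \eqref{eq:ContEqR} in Theorem \ref{thm:wellPosedFastReaction}, part \ref{item:QuasiStaticStability}, since $\hat\mu^N(0)=\mu^N(0)\to\mu^\infty(0)=\hat\mu(0)$, and (iii) $\energy^N_v(\hat J)\to\energy_v(\hat J)$ (resp. $\energy^N_\sigma(\hat J)\to\energy_\sigma(\hat J)$). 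I expect (iii) to be the main technical obstacle: one expands $\|v_i^N-v_i^{\hat J}(\mathbf{x}^N)\|^2$ (resp. the $\KL$ integrand) into pieces that are passed to the limit using the weak and ``convex'' convergences \eqref{eq:ObservationsVWeak}--\eqref{eq:ObservationsVConvex} (resp. \eqref{eq:ObservationsSigmaWeak}--\eqref{eq:ObservationsSigmaConvex}), the delicate point being that $v^{\hat J}$ and $\sigma^{\hat J}$ depend on the \emph{empirical} measure $\mu^N$ rather than on $\mu^\infty$; this dependence is removed by the $W_1$-Lipschitz/continuity estimates of Lemma \ref{lemma:UniformSigma} together with the uniform bounds $1/C<\sigma^{\hat J}<C$ which keep $\log\sigma^{\hat J}$ bounded.
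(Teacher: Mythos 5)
Your proposal is correct and follows essentially the same route as the paper: the same Gr\"onwall argument with the model-discrepancy/stability splitting and Lemma \ref{lemma:LipschitzVelocityMap} for the velocity functional, the same Pinsker-plus-consistency reduction for the $\sigma$-functional (the paper packages it as the inequality $\energy^N_v(\hat{J}) \leq 2\|e\|_\infty^2\,\energy^N_\sigma(\hat{J})$ rather than re-running Gr\"onwall), and the same limit argument via the $W_1$-stability of Theorem \ref{thm:wellPosedFastReaction} for the mean-field bounds. The convergence $\energy^N_v(\hat{J})\to\energy_v(\hat{J})$ that you flag as the main technical obstacle is exactly Lemma \ref{lemma:UniformConvergence} applied to the constant sequence $(\hat{J})_N$, which the paper invokes directly, and your sketch of it matches that lemma's proof.
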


Next, we address the existence of minimizing payoff functions, both in theory and numerical approximation.
At the theoretical level, we need to ensure compactness of minimizing sequences, which we obtain here by restriction to a suitable compact space.
At the numerical level, we are interested in finite-dimensional approximations of this space that asymptotically are dense as the discretization is refined.

\begin{remark}[Compactness and finite-dimensional approximation]
\label{rem:CompactX}
In order to obtain compactness we restrict the variational problems to suitable subspaces of $X$. In particular, for $R, M > 0$, let us define
\[
X_{R,M} = \{ J \in \Lip_b(\dball{R} \times U \times \dball{R}) \mid \|J\|_{\infty} + \Lip(J) \leq M \}.
\]
The parameter $R$ bounds the learning domain in space: inference where we have no data available is simply meaningless. The parameter $M$ will ensure compactness with respect to uniform convergence.

For each $R,M > 0$ we consider a family of closed convex subsets $(X_{R,M}^N)_{N \in \mathbb{N}} \subset X_{R,M}$ satisfying the uniform approximation property: for every $J \in X_{R,M}$ there exists a sequence $(J^N)_{N \in \mathbb{N}} $ uniformly converging to $J$ with $J^N \in X_{R,M}^N$ for each $N \in \mathbb{N}$. These approximating spaces can be selected to be finite dimensional.
\end{remark}

An alternative way to obtain compactness would be via the introduction of additional regularizing functionals. This is discussed in Section \ref{sec:NumericsSetup}. Conceptually, both approaches serve the same purpose and we find that the former is more convenient for analysis while the latter is more attractive numerically.

The following key lemma ensures convergence of the error functionals under uniform convergence of the payoff functions and/or $W_1$ convergence of measures.

\begin{lemma}
	\label{lemma:UniformConvergence}
	Let $(\mathbf{x}^N,\mathbf{v}^N,\bm{\sigma}^N)_N$ be a family of admissible observations and $\energy^{N}_v$, $\energy_v$, $\energy^{N}_\sigma$ and $\energy_\sigma$ be the corresponding discrete and continuous inference functionals \eqref{eq:energyxdot} and \eqref{eq:energysigma}.	For $M>0$, let $({J}^N)_N$ be a sequence of payoff functions in $X_{R,M}$, converging uniformly to some ${J} \in X_{R,M}$ as $N \to \infty$.
	Then,
	\begin{align}
	\lim_{N\to\infty} \energy^{N}_v({J}^N) = \energy_v({J}) \quad \text{ and } \quad \lim_{N\to\infty} \energy^{N}_\sigma({J}^N) = \energy_\sigma({J}).
	\end{align}
\end{lemma}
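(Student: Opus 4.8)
The plan is to prove the two limits separately, treating $\energy^N_v \to \energy_v$ and $\energy^N_\sigma \to \energy_\sigma$ in parallel since both have the same structure: each is a time-average of a spatial integral against the empirical measure $\mu^N(t)$, and we must pass to the limit using the convergences guaranteed by Assumption \ref{ass:Observations}, point \ref{item:ObservationsLimit}. The central device will be to rewrite each discrete functional as an integral of a continuous (in $x$) integrand against $\mu^N(t)$, so that $W_1$-convergence \eqref{eq:ObservationsMuUniform} can be exploited; the difficulty is that the integrand itself depends on the empirical measure $\mu^N(t)$ (through $v^{J^N}(\mu^N(t))$ or $\sigma^{J^N}(\mu^N(t))$) and on the varying payoff $J^N$, so we cannot directly integrate a fixed test function.

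First I would handle the \emph{model-side} terms. By Lemma \ref{lemma:UniformSigma}, the maps $x \mapsto v^{J}(\nu)(x)$ and $x \mapsto \sigma^{J}(\nu)(x,\cdot)$ are Lipschitz in $x$ and, crucially, Lipschitz jointly in the triple $(\nu, J, x)$ with respect to $W_1(\nu,\nu')$, $\|J-J'\|_\infty$, and $\|x-x'\|$, uniformly over $J \in X_{R,M}$. Combining the uniform convergence $J^N \to J$ with the uniform $W_1$-convergence $\mu^N(t)\to\mu^\infty(t)$ from \eqref{eq:ObservationsMuUniform}, I would conclude that
\begin{align*}
\sup_{t\in[0,T]} \sup_{x \in \dball{R}} \left\| v^{J^N}(\mu^N(t))(x) - v^{J}(\mu^\infty(t))(x)\right\| \to 0
\end{align*}
and the analogous statement for $\sigma^{J^N}(\mu^N(t))$. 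This replaces the moving model prediction by a fixed continuous function up to a uniformly small error, so the remaining task is to pass to the limit in terms that involve only the \emph{observation} data against $\mu^N(t)$.

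For the velocity functional I would expand the squared norm $\|v^N_i - v^{J^N}_i\|^2 = \|v^N_i\|^2 - 2\la v^N_i, v^{J^N}_i\ra + \|v^{J^N}_i\|^2$. The first term converges by the consistency assumption \eqref{eq:ObservationsVConvex}; the cross term converges by the weak convergence \eqref{eq:ObservationsVWeak}, taking as test function $\phi(t,x) = v^{J}(\mu^\infty(t))(x)$ (continuous by Lemma \ref{lemma:UniformSigma}) after absorbing the small uniform error from the previous step; and the third term, involving only model quantities integrated against $\mu^N(t)$, converges directly by $W_1$-convergence since its integrand is continuous in $x$. The mixed-strategy functional is handled identically: writing $\KL(\sigma^N_i|\sigma^{J^N}_i) = \int_U \sigma^N_i \log \sigma^N_i\,\diff\eta - \int_U \sigma^N_i \log \sigma^{J^N}_i\,\diff\eta$, the entropy term converges by \eqref{eq:ObservationsSigmaConvex}, the cross term by \eqref{eq:ObservationsSigmaWeak} with test function $\phi(t,x,u) = \log \sigma^{J}(\mu^\infty(t))(x,u)$ (bounded and continuous because of the two-sided bound $1/C < \sigma^J < C$ from Lemma \ref{lemma:UniformSigma}). \textbf{The main obstacle} is ensuring the cross terms genuinely fit the weak-convergence assumptions: the test functions $v^{J}(\mu^\infty(t))(x)$ and $\log\sigma^{J}(\mu^\infty(t))(x,u)$ depend on $t$ and must be verified to lie in the admissible classes $[C([0,T]\times\R^d)]^d$ and $[C([0,T]\times\R^d\times U)]^d$, which requires joint continuity in $(t,x)$; this follows from Lemma \ref{lemma:UniformSigma} together with the continuity of $t \mapsto \mu^\infty(t)$ in $W_1$, but must be stated carefully so that the replacement of the moving prediction $v^{J^N}(\mu^N(t))$ by the fixed $v^{J}(\mu^\infty(t))$ is uniform in time and does not interfere with the time integration.
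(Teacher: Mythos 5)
Your proposal is correct and follows essentially the same route as the paper: the key step in both is using Lemma \ref{lemma:UniformSigma} together with $\|J^N-J\|_\infty\to 0$ and the uniform $W_1$-convergence of $\mu^N(t)$ to replace the moving model predictions $v^{J^N}(\mu^N(t))$, resp.\ $\sigma^{J^N}(\mu^N(t))$, by the fixed limit quantities, and then invoking the admissibility assumptions to pass to the limit. Your explicit expansion of the squared norm (and the splitting of the KL divergence into entropy and cross term), together with the verification that the resulting test functions are admissible in \eqref{eq:ObservationsVWeak} and \eqref{eq:ObservationsSigmaWeak}, is exactly the detail the paper compresses into ``the result follows by passing to the limit in the definition.''
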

\begin{proof}	
	Thanks to Assumption \ref{ass:Observations}, point \ref{item:ObservationsLimit}, we have $W_1(\mu^N(t),\mu^\infty(t)) \to 0$ uniformly in $t \in [0,T]$ as $N \to \infty$. Lemma \ref{lemma:UniformSigma} provides
	\begin{align*}
	\left\| v^{{J}^N}(\mu^N(t))(x) - v^{{J}}(\mu^\infty(t))(x) \right\| &
	\leq C \cdot \left( W_1(\mu^N(t),\mu^\infty(t)) + \|\hat{J}^N-\hat{J}\| \right)
	\end{align*}
	uniformly in $x \in \dball{R}, t \in [0,T]$ and $N$.
	Hence, $v^{{J}^N}(\mu^N(t))(x)$ converges uniformly to $v^J(\mu^\infty(t))(x)$ in $C([0,T]\times \dball{R}; \R^d)$. Since $\|v^J(\mu^\infty(t))(x)\|\allowbreak \leq \|e\|_\infty$ for every $x \in \dball{R}$ and $t \in [0,T]$, we also have $\|v^{J^N}(\mu^N(t))(x) \|^2 \to \|v^J(\mu^\infty(t))(x)\|^2$ uniformly in $C([0,T]\times \dball{R})$. Recalling that $v_i^{{J}^N}(x_1^N(t),\ldots,x_N^N(t))(x) = v^{{J}^N}(\mu^N(t))(x_i^N(t))$ by definition and using the convergence of velocities provided by Assumption \ref{ass:Observations}, point \ref{item:ObservationsLimit}, the result follows by passing to the limit in the definition:
	\begin{align*}
	\lim_{N \to \infty} \energy^{N}_v({J}^N) &= \lim_{N \to \infty} \frac{1}{T} \int_0^T \left[
	\frac{1}{N} \sum_{i=1}^N \left\|v_i^N(t)-v_i^{{J}^N}(x_1^N(t),\ldots,x_N^N(t)) \right\|^2 
	\right] \diff t \\
	&= \frac{1}{T} \int_0^T \int_{\R^d} \left\|v^\infty(\mu^\infty(t))(x) - v^{{J}}(\mu^\infty(t))(x)\right\|^2 \diff \mu^\infty(t)(x) \diff t = \energy_v({J}).
	\end{align*}
	An analogous derivation applies to the $\sigma$-energies \eqref{eq:energysigma} where we use the Lipschitz estimate \eqref{eq:SigmaLip} and the $\sigma$-part of Assumption \ref{ass:Observations}, point \ref{item:ObservationsLimit}.
\end{proof}

We are now ready to state the main result concerning the inference functionals and convergence of minimizers for $\energy_v^{N}$ and $\energy_\sigma^{N}$.

\begin{theorem}\label{thm:Main}
	Let $(\mathbf{x}^N,\mathbf{v}^N,\bm{\sigma}^N)_N$ be a family of admissible observations and fix $M > 0$. Then:
	\begin{enumerate}
	\item The functionals \eqref{eq:energyJNxdot} and \eqref{eq:energyJNsigma} have minimizers over $X_{R,M}^N$ and \eqref{eq:energyJxdot} and \eqref{eq:energyJsigma} have minimizers over $X_{R,M}$.
	\item For $N > 0$, let
	\[
		{J}^N_v \in \argmin_{{J} \in X_{R,M}^N} \energy^{N}_v({J}) \quad \text{and} \quad {J}^N_\sigma \in \argmin_{{J} \in X_{R,M}^N} \energy^{N}_\sigma({J}).
	\]
	The sequence $({J}^N_v)_N$, resp. $({J}^N_\sigma)_N$, has a subsequence converging uniformly to some
	continuous function ${J}_v \in X_{R,M}$, resp. ${J}_\sigma \in X_{R,M}$, such that
	\begin{align}
		\label{eq:MainLimitMinimizers}
		\lim_{N \to \infty} \energy^{N}_v({J}^N_v) = \energy_v({J}_v) \quad \text{and} \quad \lim_{N \to \infty} \energy^{N}_\sigma({J}^N_\sigma) = \energy_\sigma({J}_\sigma).
	\end{align}
	In particular, ${J}_v$ (resp. ${J}_\sigma$) is a minimizer of $\energy_v^J$ (resp. $\energy_\sigma^J$) over $X_{R,M}$.
	\end{enumerate}
\end{theorem}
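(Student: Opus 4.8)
The plan is to combine an Arzel\`a--Ascoli compactness argument with the continuity of the inference functionals furnished by Lemma~\ref{lemma:UniformConvergence}, following the standard blueprint for convergence of minimizers. The compactness core is that $X_{R,M}$ is compact under uniform convergence: every $J \in X_{R,M}$ is Lipschitz with constant at most $M$, hence extends uniquely to a continuous function on the compact set $\ol{\dball{R}} \times U \times \ol{\dball{R}}$, where the family is equibounded (by $\|J\|_\infty \leq M$) and equicontinuous (by $\Lip(J) \leq M$); Arzel\`a--Ascoli then yields precompactness, and closedness holds because the bound $\|J\|_\infty + \Lip(J) \leq M$ survives uniform limits. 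Each $X_{R,M}^N$ is a closed subset of $X_{R,M}$, hence also compact.

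For Part~1 (existence) I would check that each functional is continuous under uniform convergence of $J$ with the observation held fixed. For $\energy^{N}_v$ and $\energy_v$ this is immediate from estimate \eqref{eq:VelocityLip} of Lemma~\ref{lemma:UniformSigma} taken with $\mu = \mu'$, which forces the predicted velocities $v^{J^k}$ to converge uniformly to $v^{J}$ whenever $J^k \to J$ uniformly; for $\energy^{N}_\sigma$ and $\energy_\sigma$ one uses \eqref{eq:SigmaLip} together with the two-sided bounds $1/C < \sigma^{J} < C$, which make the integrand $\log(\sigma^\infty/\sigma^{J})\,\sigma^\infty$ (and, in the discrete case, the $J$-dependent part of the $\KL$ divergence) depend continuously on $J$. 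A continuous functional on a compact set attains its minimum, which gives Part~1.

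For Part~2 (convergence of minimizers) I would run the usual two-sided comparison. Let $J^N_v \in \argmin_{X_{R,M}^N}\energy^{N}_v$; by compactness of $X_{R,M}$ a subsequence converges uniformly to some $J_v \in X_{R,M}$, and along it Lemma~\ref{lemma:UniformConvergence} yields $\energy^{N}_v(J^N_v) \to \energy_v(J_v)$. To see $J_v$ is optimal, fix any competitor $\tilde{J} \in X_{R,M}$ and use the uniform approximation property of Remark~\ref{rem:CompactX} to choose $\tilde{J}^N \in X_{R,M}^N$ with $\tilde{J}^N \to \tilde{J}$ uniformly; Lemma~\ref{lemma:UniformConvergence} gives $\energy^{N}_v(\tilde{J}^N) \to \energy_v(\tilde{J})$, while minimality gives $\energy^{N}_v(J^N_v) \leq \energy^{N}_v(\tilde{J}^N)$ for every $N$. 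Passing to the limit produces $\energy_v(J_v) \leq \energy_v(\tilde{J})$, so $J_v$ minimizes $\energy_v$ over $X_{R,M}$; the same argument with \eqref{eq:SigmaLip} in place of \eqref{eq:VelocityLip} treats the $\sigma$-functionals.

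Since Lemma~\ref{lemma:UniformConvergence} already absorbs the analytic heavy lifting, the only step genuinely demanding care is the compactness claim --- specifically the passage to the closed ball so that Arzel\`a--Ascoli is applicable, and the verification that the constraint set is closed under uniform limits. The remainder is the routine recombination characteristic of $\Gamma$-convergence-style arguments, made cleaner here because Lemma~\ref{lemma:UniformConvergence} supplies full convergence of the functionals rather than only the one-sided $\liminf$/$\limsup$ inequalities.
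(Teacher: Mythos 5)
Your proposal is correct and follows essentially the same route as the paper: compactness of $X_{R,M}$ (and of the closed subsets $X_{R,M}^N$) under uniform convergence, continuity of the functionals via the Lipschitz estimates of Lemma~\ref{lemma:UniformSigma} (which is exactly what Lemma~\ref{lemma:UniformConvergence} encodes when the observations are held fixed), and then the standard comparison argument using the uniform approximation property of the spaces $X_{R,M}^N$ to show that the limit of minimizers is itself a minimizer. The only difference is cosmetic: you spell out the Arzel\`a--Ascoli argument that the paper leaves implicit, and you invoke Lemma~\ref{lemma:UniformSigma} directly for Part~1 where the paper cites Lemma~\ref{lemma:UniformConvergence}.
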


\begin{proof}\hfill \\
	\emph{Part 1: existence of minimizers.}
	All functionals \eqref{eq:energyxdot} and \eqref{eq:energysigma} are continuous in ${J}$ with respect to uniform convergence which follows from Lemma \ref{lemma:UniformConvergence} setting the initial measure $\bar{\mu}$ to be the empirical one (i.e.~just looking at converging ${J}$ for fixed trajectories).
	The sets $X_{R,M}$ and $X_{R,M}^N$ are compact with respect to uniform convergence. Therefore, minimizers exist.
	
	\noindent
	\emph{Part 2: convergence of minimizers.}
	The proof is inspired by \cite[Section 4.2]{BFHM16}.
	Since $X_{R,M}^N \subset X_{R,M}$ and the latter is compact, the sequence of minimizers $({J}_v^N)_N$ with ${J}^N_v \in X_{R,M}^N$ must have a cluster point ${J}_v$ in $X_{R,M}$. For simplicity, denote the converging subsequence again by $({J}^N_v)_N$. Let $\tilde{J} \in X_{M,R}$. By the uniform approximation property of $X_{R,M}^N$ let $(\tilde{J}^N)_N$ be a sequence converging uniformly to $\tilde{J}$ and $\tilde{J}^N \in X_{R,M}^N$. By Lemma \ref{lemma:UniformConvergence} and optimality of each ${J}^N_v$,
	\begin{align*}
	\energy_v({J}_v) = \lim_{N\to\infty} \energy^{N}_v({J}^N_v) \leq \lim_{N\to\infty} \energy^{N}_v(\tilde{J}^N) = \energy_v(\tilde{J}).
	\end{align*}
	Therefore, ${J}_v$ minimizes $\energy_v$ and the optimal values converge. The same argument covers the $\sigma$-functionals \eqref{eq:energysigma} and the corresponding sequence of minimizers $({J}_\sigma^N)_N$.
\end{proof}

\medskip
\noindent
We conclude this section with a couple of comments and remarks on the interpretation of results in Theorem \ref{thm:Main}.

\begin{remark}[Realizability of data]
In general, observed data may or may not be realizable by the model. This distinguishes two regimes.
\begin{itemize}
	\item Assume there is a \emph{true} regular $J$ that generated the observed data (as for example in Lemma \ref{lemma:ExplicitForward}). If we pick $M$ large enough in Theorem \ref{thm:Main}, so that $J \in X_{R,M}$, one has $\energy_v({J}_v) \leq \energy_v({J}) = 0$ and analogously $\energy_v({J}_\sigma) \leq \energy_\sigma({J}) = 0$. Hence, minimizers of $\energy_v$ and $\energy_\sigma$ over $X_{R,M}$ reproduce exactly the trajectories of the system thanks to Theorem \ref{thm:BoundOnTrajectories}.
	\item Assume there is no \emph{true} regular $J$ that generated the observed data. This means that, no matter how large $M$ is taken in Theorem \ref{thm:Main}, the minimal limit energies $\energy_\sigma({J}_\sigma)$ and $\energy_v({J}_v)$ may not be equal to $0$. However, the bound on the trajectories generated by the inferred model, Theorem \ref{thm:BoundOnTrajectories}, still holds and so the remaining residual values $\energy_\sigma({J}_\sigma)$ and $\energy_v({J}_v)$ are then an indication for how well our parametrized family of interaction models was able to capture the structure in the observed data.
\end{itemize}
\end{remark}

\begin{remark}[Non-uniqueness of minimizers]\label{rem:NonUniqueness}
	We point out that the parametrization of an entropic game model by a payoff function $J$ is not unique.
	Replacing $J$ by $\hat{J}(x,u,x',u') \assign J(x,u,x',u') + g(x,x',u')$ yields the same agent trajectories for arbitrary $g : \Omega \times \Omega \times U$. This can be seen from the fact that $f^{J+g}=f^J$ for $f^J$ and $f^{J+g}$ as given in \eqref{eq:fJ}.
	Intuitively, given some $x$, $x'$ and $u'$, which strategy $u$ is most attractive does not change, if we add a constant benefit for all potential choices.
	The analogous observation holds for the undisclosed case.
	This implies that minimizers of \eqref{eq:GamesDifferentialEnergy}, \eqref{eq:energyxdot} and \eqref{eq:energysigma} are not unique.
	This has to be taken into account when we try to interpret an inferred ${J}$.
	In our numerical examples with the undisclosed fast-reaction model we usually made additional structural assumptions on $J$ (see Section \ref{sec:NumericsSetup}) and as a consequence did not observe issues with non-unique minimizers. The phenomenon is encountered in a proof-of-concept example on the differential inference functional, Section \ref{sec:NumericsDifferential}.
\end{remark}

\section{Numerical experiments}
\label{sec:Numerics}
In this section we present some numerical examples for the entropic game models of Section \ref{sec:Model} and their inference with the functionals of Section \ref{sec:Inference} to provide a basic proof-of-concept and some intuition about their behaviour.

\subsection{Numerical setup}
\label{sec:NumericsSetup}
\paragraph{Preliminaries and discretization.}
We assume for simplicity that the strategy space $U$ is finite, i.e.~$U=\{u_1,\ldots,u_K\}$ for some $K>0$ and fix the reference measure $\eta = \sum_{k=1}^K \frac{1}{K}\delta_{u_k}$ as the normalized uniform measure over $U$.

For a system of $N$ agents starting at positions $x_1(0), \dots, x_N(0)$, we observe $S$ snapshots of the evolution at times $t_s = s\cdot \frac{T}{S}$ for $s = 1,\dots,S$, so that we are given locations $\{x_i(t_s)\}_{s,i}$, velocities $\{\partial_t x_i(t_s)\}_{s,i}$ and (sometimes) mixed strategies $\{\sigma_i(t_s)\}_{s,i}$ where indices $s$ and $i$ run from $1$ to $S$ and $1$ to $N$ respectively. Here $\sigma_{i}(t_s) = (\sigma_{i,1}(t_s),\dots,\sigma_{i,K}(t_s))$ is a discrete probability density with respect to $\eta$.

Let $\Omega = \prod_{i=1}^d [\ell_i, r_i]$, $\ell_i,r_i \in \R$, be the smallest hypercube containing all observed locations. We discretize $\Omega$ by a regular Cartesian grid and describe the unknown payoff $J$ by its values $J(\cdot,u_k,\cdot)$ at grid points for $u_k \in U$ (or $J(\cdot,u_k,\cdot,u_{k'})$ in the fully general setting). Between grid points, $J$ is extended by coordinate-wise linear interpolation, i.e., we consider piecewise $d$-linear finite elements over hypercubes.

Within this setting, the inference functional \eqref{eq:energysigma} reduces to the discrete error functional
\begin{align}
\label{eq:energysigmaDiscrete}
\energy_\sigma^N(J) = \frac{1}{SNK} \sum_{s=0}^S \sum_{i=1}^N \sum_{k=1}^K
\log\left( \frac{\sigma_{i,k}(t_s)}{\sigma^{J}_{i,k}(t_s)} \right)
\sigma_{i,k}(t_s)
\end{align}
where, for $s \in \{0,\ldots,S\}$, $i \in \{1,\dots,N\}$ and $k \in \{1,\dots,K\}$, the optimal mixed strategy at time $t_s$ is given by \eqref{eq:QuasiStaticGameModelSigma} as
\[
\sigma^{J}_{i,k}(t_s) = \sigma^{J}_{i}(x_1(t_s),\dots,x_N(t_s))(u_k) = \frac{\exp\left(\tfrac{1}{\veps N}\sum_{j=1}^N J(x_i(t_s),u_k,x_j(t_s))\right)}{\frac{1}{K}\sum_{k'=1}^K \exp\left(\tfrac{1}{\veps N}\sum_{j=1}^N J(x_i(t_s),u_{k'},x_j(t_s))\right)}
\]
Similarly, the inference functional \eqref{eq:energyxdot} reduces to the discrete functional
\begin{align}
\label{eq:energyxdotDiscrete}
\energy_v^N(J) = \frac{1}{SN} \sum_{s=0}^S \sum_{i=1}^N \left\| v_i^J(t_s) - \partial_t x_i(t_s)\right\|^2
\end{align}
where
\[
v_i^J(t_s) = v^{J}_{i}(x_1(t_s),\dots,x_N(t_s)) = \frac{1}{K}\sum_{k=1}^K e(x_i(t_s), u_k) \sigma_{i,k}^J(t_s).
\]

\paragraph{Reducing dimensionality.}
When $d \geq 2$, describing a general function $J : \Omega \times U \times \Omega \to \R$ where $\Omega \subset \R^d$ (or even $\Omega \times U \times \Omega \times U \to \R$ in the fully general setting) requires many degrees of freedom (one value for each node on the grid) and inferring them from data in a reliable way would require a large set of observations.
After inference, interpreting such a general function to understand the interactions between agents can be a daunting task.
For these two reasons we may wish to incorporate some prior knowledge on the structure of $J$ via a suitable ansatz. A prototypical example is
\begin{align}
\label{eq:JSplitting}
J(x,u,x')=J_1(x,u) + J_2(x'-x,u)
\end{align}
where the first term models how a single agent prefers to move by itself depending on its absolute location, and the second term models a translation invariant pairwise interaction with other agents. This reduces the dimensionality from one function over $\R^d \times U \times \R^d$ to two functions over $\R^d \times U$. Further simplifications can be made, for instance, by assuming that the interaction is also rotation invariant. Such parametrizations are very common in Newtonian models, see for instance \cite{BFHM16}, where radially symmetric interaction functions are assumed.
Unless noted otherwise, we will in the following use the ansatz \eqref{eq:JSplitting} in our experiments.

\paragraph{Regularization and dropping the Lipschitz constraints.}
Even after the reduction \eqref{eq:JSplitting} inferring the coefficients of $J$ is a high-dimensional non-linear inverse problem. We may observe no (or very few) agents near some grid points and thus have no reliable data to set the corresponding coefficients of $J$ directly.
A common approach to avoiding ill-posedness is to add regularization to the minimization problem.
Intuitively, this will diffuse information about observations over the grid to some extent.
For regularization in $\Omega$ we use the squared $L^2$-norm of the gradient of $J$, which boils down to weighted sums of the finite differences between grid points. For the strategy space $U$ one can in principle do the same, although this was not necessary in our examples.
When $J$ is split as in \eqref{eq:JSplitting}, a regularizer can be applied separately to each term.
The regularized version of \eqref{eq:energysigmaDiscrete} then becomes
\begin{align}
\label{eq:energysigmaDiscreteReg}
\energy_\sigma^N(J) + \lambda_1 \cdot \mathcal{R}_1(J_1) + \lambda_2 \cdot \mathcal{R}_2(J_2).
\end{align}
Of course, over-regularization will lead to loss of contrast and high-frequency features of $J$.
The effect of under-regularization is illustrated in Example \ref{ex:NumericsUnderreg}.

While in Theorem \ref{thm:Main} we consider increasingly finer discretizations $X_{R,M}^N$, $N \to \infty$, of the payoff function space $X_{R,M}$, in our numerical examples a fixed resolution is entirely sufficient (approximately 30 grid points along any spatial dimension). On a fixed grid the Lipschitz and $L^2$-gradient semi-norms are equivalent.
Consequently, for simplicity, in our numerical experiments we may drop the explicit constraint on the Lipschitz constant of $J$ and only use the regularization term to impose regularity.
This has the advantage that our finite-dimensional discrete problem is then unconstrained and we may approach it with quasi-Newton methods such as L-BFGS, for which we use the Julia package \textup{Optim} \cite{Optim}.

\paragraph{Inferring $\sigma_i(t_s)$ from observations.}
In many situations we are not able to observe the mixed strategies of agents, but merely their locations and velocities.
We can then decide to use the non-convex velocity based error functional \eqref{eq:energyxdotDiscrete}.
Alternatively, in case we want to optimize the convex inference functional $\energy_\sigma^N$ \eqref{eq:energysigmaDiscrete} instead, we can try to infer some meaningful $\sigma_i(t_s)$ from the observed velocities.
In both cases we need to make suitable choices for $U$ and $e$.
Unless stated otherwise, we pick $U$ as a discrete subset of $\R^d$ such that observed velocities lie in the (interior of the) convex hull of $U$ and pick $e(x,u) \assign u$. This means that every observed velocity can be reproduced by some mixed strategy.
As a heuristic method for inferring a viable mixed strategy $\sigma$ from a velocity $v$ we propose to solve the following minimization problem:
\begin{align}
\label{eq:ReconstructSigma}
\arg \min\left\{
\int_U \left[ \veps \cdot \log(\sigma(u)) + \|u-v\|^2 \right]\,\sigma(u)\,\diff \eta(u)
\middle|
\sigma \in \sigmaSet, \,
\int_U u \cdot \sigma(u)\,\diff \eta(u)=v
\right\}
\end{align}
The constraint enforces that $\sigma$ reproduces the observed velocity $v$. The term $\|u-v\|^2\,\sigma(u)$ encourages $\sigma$ to be concentrated in the vicinity of $v$ in $U$, whereas the entropic term $\veps \cdot \log(\sigma(u))\,\sigma(u)$ keeps $\sigma$ from collapsing onto a single point.
It is easy to see that minimizers of \eqref{eq:ReconstructSigma} are of the form $\sigma(u)=A \cdot \exp(-\|u-\tilde{v}\|^2/\veps)$ where $A$ is a normalization factor and $\tilde{v}$ is a velocity close to $v$, potentially slightly shifted to make sure that the constraint $\int_U u \cdot \sigma(u)\,\diff \eta(u)=v$ is satisfied.
\eqref{eq:ReconstructSigma} is readily solved by standard methods.
We consider this approach a heuristic `inverse' of the approximation of Newtonian models by game models, as outlined in Example \ref{ex:FastReactionVsNewton}.

\paragraph{Observations from forward models: subsampling and multiple realizations.}
In all the examples here below, observations are obtained by means of simulating a forward-model with explicit Euler-stepping with time-step size $\Delta t=0.02$. Since the numerical complexity of our inference functionals grows linearly with the number of observations, we usually choose to sub-sample the trajectories in time, keeping only every $\delta$-th step, since intuitively the information contained in subsequent steps is largely redundant as agents are confronted with a very similar configuration. Unless noted otherwise, we set $\delta=2$.

The quality of the reconstructed model is highly related to the data quality: more observations in a given region imply a better reconstruction of the payoff function in that specific region. Generally, if we consider as inputs only data coming from a single realization of the system we have highly correlated observations, which tend to concentrate only on a small subset of the reconstruction domain. Vaster regions of the domain are then explored taking $N \to \infty$. However, another option to enhance the quality of the reconstructed model is to combine data from multiple realizations with different initial conditions, i.e., we fix the number of agents $N$ and simulate the system for multiple different initial conditions. This provides a good coverage of the domain. Extending the functionals in Section \ref{sec:Inference} to multiple observations is straight-forward.

\subsection{Learning from undisclosed fast-reaction game models}
\label{sec:NumericsNTFR}
We now begin with relatively simple numerical examples: an explicit game model is fixed (including $U$, $e$, $J$), simulations according to this model are performed and subsequently the payoff function is inferred from the observations. By possibly re-scaling $J$ we may w.l.o.g.~set $\veps=1$ for all numerical experiments, see \eqref{eq:QuasiStaticGameModel}.
In this section we focus on the undisclosed fast-reaction setting.
A proof of concept of inference for the full entropic game models via the differential functional of Section \ref{sec:InferenceDifferential} is discussed in Section \ref{sec:NumericsDifferential}.

\begin{figure}
	\begin{subfigure}{.49\textwidth}
		\centering
		\includegraphics{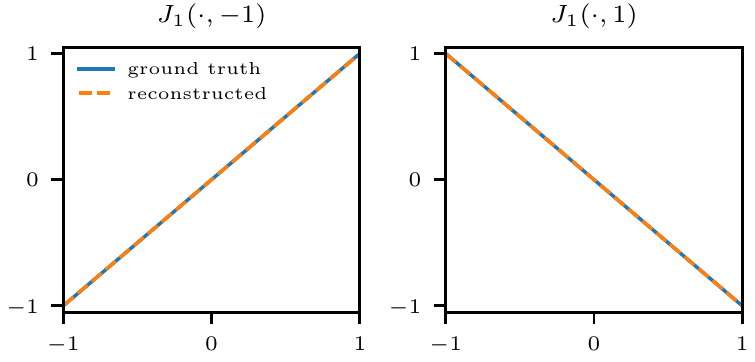}
		\caption{}
		\label{fig:03ntfr1dexactdataJself}
	\end{subfigure}
	\begin{subfigure}{.49\textwidth}
		\centering
		\includegraphics{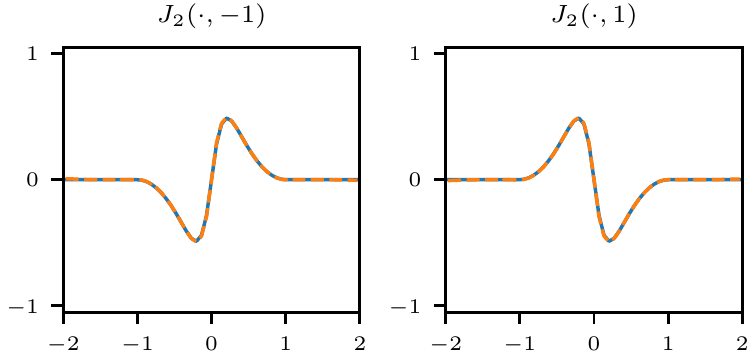}
		\caption{}
		\label{fig:03ntfr1dexactdataJdist}
	\end{subfigure}\\
	\begin{subfigure}{.49\textwidth}
		\centering
		\includegraphics{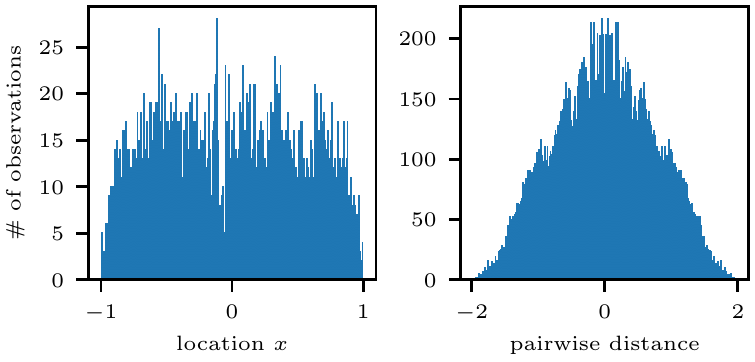}  
		\caption{}
		\label{fig:03ntfr1dexactdatahistogram}
	\end{subfigure}
	\begin{subfigure}{.49\textwidth}
		\centering
		\includegraphics{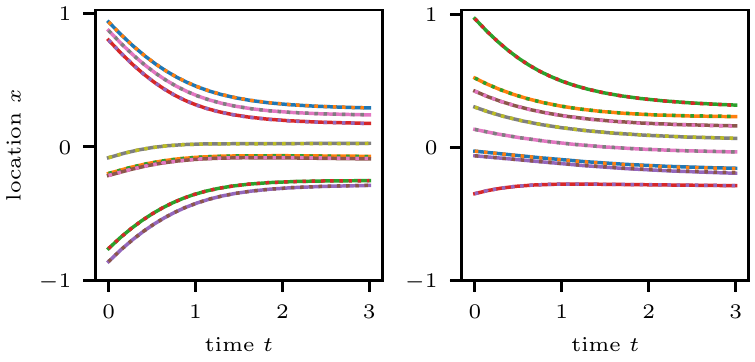}
		\caption{}
		\label{fig:03ntfr1dexactdatatruevslearn}
	\end{subfigure}
	\caption{Inference from $1$d observations of an undisclosed fast-reaction system. %
	(\subref{fig:03ntfr1dexactdataJself},\subref{fig:03ntfr1dexactdataJdist}): reconstruction of the payoff function. %
	(\subref{fig:03ntfr1dexactdatahistogram}): distribution of the data in the training set. %
	(\subref{fig:03ntfr1dexactdatatruevslearn}): comparison between true trajectories (solid lines) and trajectories generated by the inferred model (dashed lines) for two new realizations (not part of the training data).}
	\label{fig:03ntfr1d}
\end{figure}

\begin{example}[Game model, $d=1$]
	\label{ex:NTFRd1}
	Let $d=1$, $U=\{-1,+1\}$ and $e(x,u)=u$. We set
	\begin{align}\label{eq:Jdefinition}
		J(x,u,x') & \assign J_1(x,u) + J_2(x'-x,u), \qquad
		J_1(x,u) \assign -u \cdot x, \nonumber \\
		J_2(\Delta x,u) & \assign -u \cdot \tanh(5\Delta x) \cdot \left(\max\{1-|\Delta x|^2,0\}\right)^2.
	\end{align}
	$J_1$ encourages agents to move towards the origin $x=0$, e.g.~for $x>0$ one has $J_1(x,-1)>J_1(x,+1)$, thus preferring strategy (and velocity) $u=-1$ over $u=+1$.
	$J_2$ implements repulsion between nearby agents: when $\Delta x \in (0,1)$, i.e.~the `other agent' is to the right of `me' and relatively close, then $J_2(\Delta x,-1)>J_2(\Delta x,+1)$, and I am encouraged to move left.
	
	We set $N=8$, simulated 100 instances over a time span of $0.2$, collecting 5 data points per instance, i.e.,~a total of 500 observed configurations (consisting of locations and mixed strategies). In each instance, initial locations are sampled uniformly from $[-1,1]$. Observed relative locations between two agents are thus distributed over $[-2,2]$.
	Describing the discrete $J$ required 178 coefficients.
	Since observed mixed strategies are available in this example, we use the energy \eqref{eq:energysigmaDiscrete} for inference augmented with a regularizer as discussed in Section \ref{sec:NumericsSetup}, with $\lambda_{1}=\lambda_2=10^{-6}$, see \eqref{eq:energysigmaDiscreteReg}.
	The results are illustrated in Figure \ref{fig:03ntfr1d}.
	We find that, as intended, the model describes agents moving towards the origin while avoiding getting too close to each other.
	The functions $J_1$ and $J_2$ are accurately recovered from the data. For newly generated initial conditions, i.e.,~not taken from the training data, the trajectories simulated with the inferred $J$ are in excellent agreement with the underlying true model, demonstrating that the inferred model is able to generalize.
\end{example}

\begin{figure}[tb]
	\centering
	\includegraphics{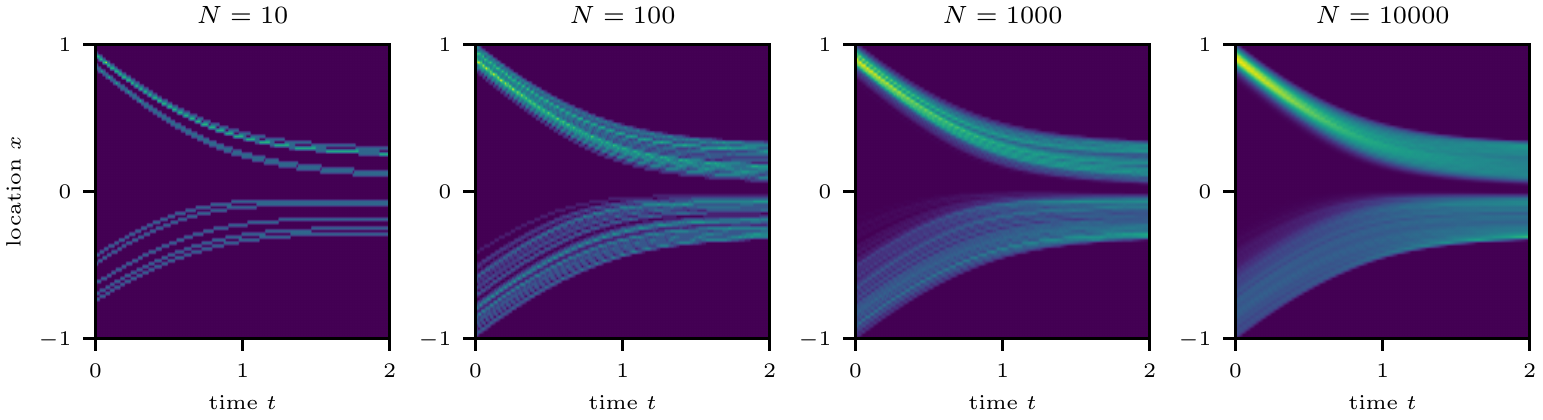}  
	\caption{Numerical approximation of the mean field limit. The forward model of Example \ref{ex:NTFRd1} is simulated for an increasing number of agents. Each panel shows a discrete histogram of the particle distribution over time. As $N$ increases the histograms approach a consistent limit.}
	\label{fig:meanfieldlimit}
\end{figure}

\begin{example}[Towards the mean-field regime]
	We can also explore the mean-field regime numerically. Figure \ref{fig:meanfieldlimit} shows trajectories for the same model as in the previous example for an increasing number of particles where the initial locations are sampled from some underlying distribution. As anticipated the behaviour approaches a consistent limit.

	Inference can also be performed for larger numbers of particles.
	We applied the same approach as in the previous example with $N=100$ agents. Since we are dealing with a large number of agents, already a small number of configurations carries enough information for the inference. Thus, we simulate 10 instances over a time span of $0.02$ with time-step size $\Delta t = 0.002$ , collecting 2 data points per instance, i.e.,~a total of 20 observed configurations (for locations and mixed strategies). In each instance, initial locations are sampled uniformly from $[-1,1]$. The inferred payoff function is essentially identical to the one obtained in Figure \ref{fig:03ntfr1d}.
\end{example}

\begin{figure}
	\begin{subfigure}{.49\textwidth}
		\centering
		\includegraphics{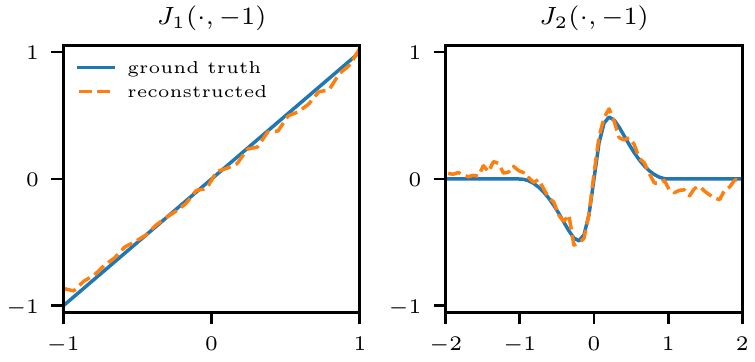}  
		\caption{}
		\label{fig:03ntfr1dexactdataresamplingJselfdist}
	\end{subfigure}
	\begin{subfigure}{.49\textwidth}
		\centering
		\includegraphics{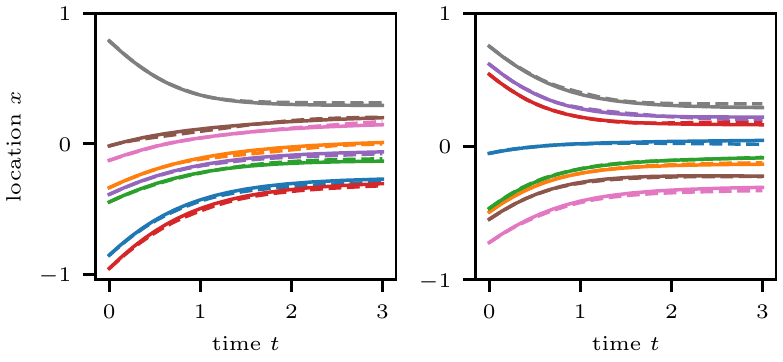}  
		\caption{}
		\label{fig:03ntfr1dexactdataresamplingtruevslearn}
	\end{subfigure}\\
	\caption{Inference from noisy $1$d observations of an undisclosed fast-reaction system. %
	(\subref{fig:03ntfr1dexactdataresamplingJselfdist}): reconstruction of the payoff function for $u=-1$. %
	(\subref{fig:03ntfr1dexactdataresamplingtruevslearn}): comparison between true trajectories (solid lines) and trajectories generated by the inferred model (dashed lines) for multiple new realizations (not part of the training data).}
	\label{fig:03ntfr1dexactdataresampling}
\end{figure}

\begin{example}[Game model, $d=1$, noisy data]
\label{ex:NTFRNoise}
We repeat the previous example with some added noise. We consider the same set of observations, but the observed mixed strategies are corrupted by re-sampling: For a given `true' simulated $\sigma$, we draw 20 times from $U$, according to $\sigma$ and now use the resulting empirical distribution for inference. These new corrupted $\sigma$ provide corrupted velocities predictions: in our example, the overall standard deviation between exact and corrupted velocities is $\approx 0.2$ (with the range of admissible velocities being $[-1,1]$).
This could model the situation where observation of the mixed strategies is imperfect or by observing very noisy (stochastic) agent movement and using empirical distributions of agent velocities over short time intervals as substitute for mixed strategies.
The results are shown in Figure \ref{fig:03ntfr1dexactdataresampling}, for parameters $\lambda_{1}=\lambda_2=10^{-5}$ to enforce more regularity due to noisy inputs.
The inferred payoff functions are similar to the previous example with just a few spurious fluctuations.
The trajectories simulated with the inferred $J$ are close to the (unperturbed) true trajectories, the error is consistently smaller than the one caused by the re-sampling noise, indicating that inference also works well in noisy settings.
\end{example}

\begin{figure}[tbh]
	\begin{subfigure}{\textwidth}
		\centering
		\includegraphics{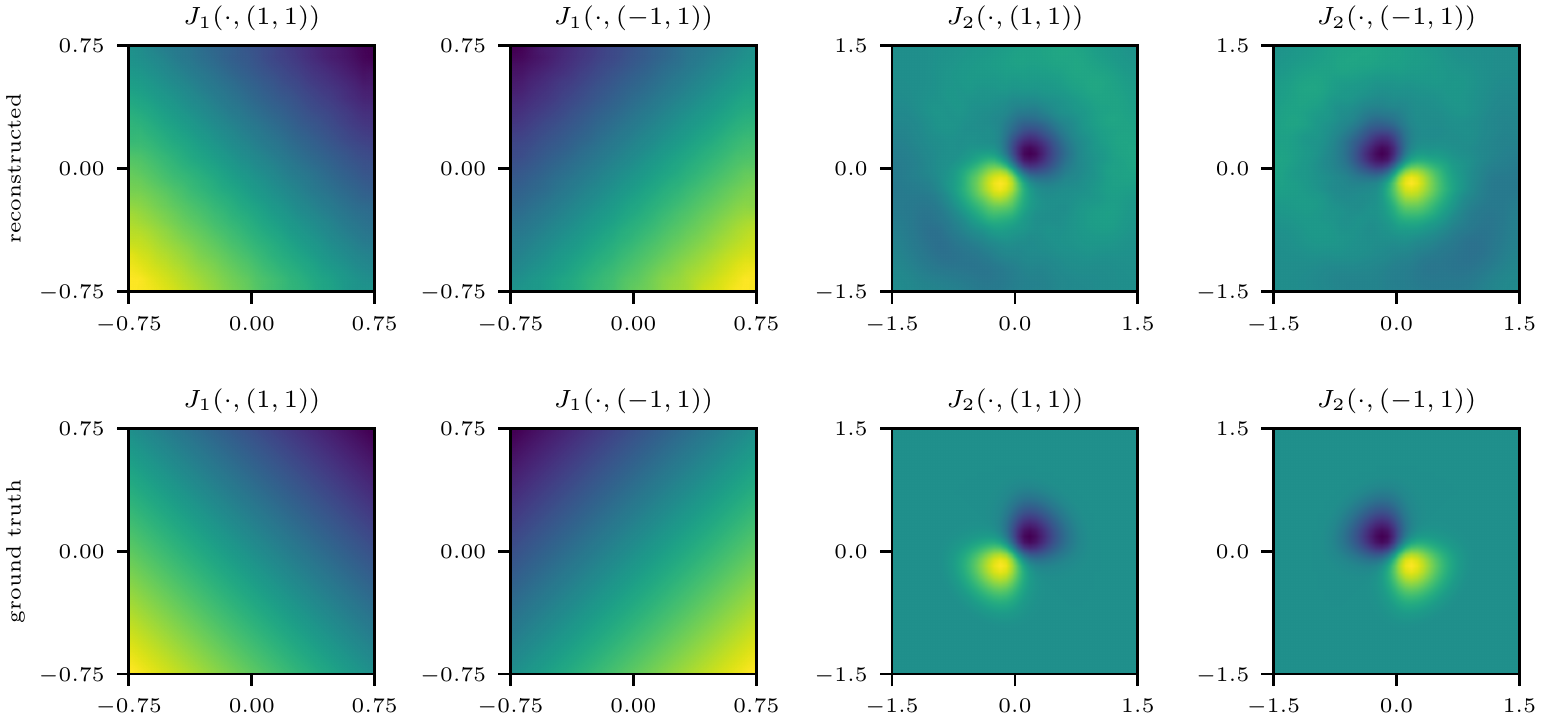}  
		\caption{}
		\label{fig:03ntfr2dexactdatajrec}
	\end{subfigure}\\
	\begin{subfigure}{0.49\textwidth}
		\centering
		\includegraphics{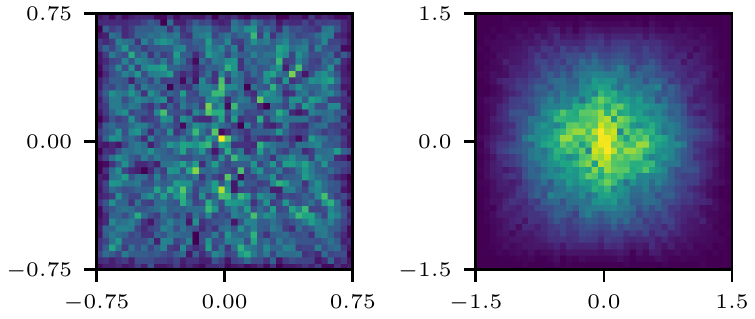}  
		\caption{}
		\label{fig:03ntfr2dexactdatahistogram}
	\end{subfigure}
	\begin{subfigure}{0.49\textwidth}
		\centering
		\includegraphics{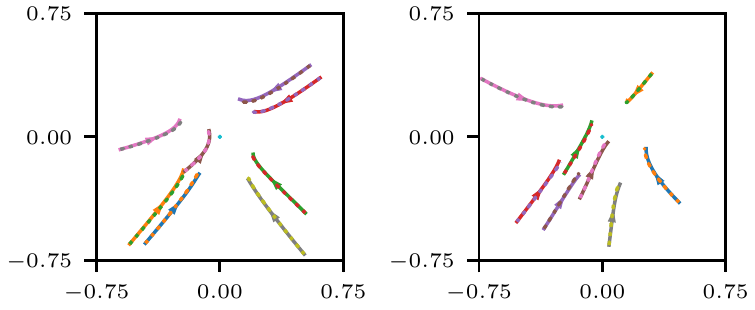}  
		\caption{}
		\label{fig:03ntfr2dexactdatatruevslearn}
	\end{subfigure}
	\caption{Inference from $2$d observations of an undisclosed fast-reaction system. %
	(\subref{fig:03ntfr2dexactdatajrec}): inferred payoff function and ground truth for pure strategies $(1,1)$ and $(-1,1)$. %
	(\subref{fig:03ntfr2dexactdatahistogram}): distribution of the data in the training set (agents locations and relative locations), dark blue : low, yellow: high. %
	(\subref{fig:03ntfr2dexactdatatruevslearn}): comparison between exact trajectories (solid lines) and trajectories generated by the inferred model (dashed lines) for two new realizations (not part of the training data).}
	\label{fig:03ntfr2d}
\end{figure}

\begin{example}[Game model, $d=2$]
	\label{ex:NTFRd2}
	Let $d=2$, $U = \{(1,1),(-1,1),(-1,-1),(1,-1)\} \subset \R^2$ and $e(x,u)=u$. We use the same $J$ defined in \eqref{eq:Jdefinition}, thus combining a driving force towards the origin $x=(0, 0)$ and repulsion between nearby agents (we assume $\tanh$ to be applied componentwise to vectors).
	
	Data collection follows the same approach as in Example \ref{ex:NTFRd1}: for $N=8$ agents we simulate 100 instances over a time span of $0.2$, collecting 5 data points per instance, for a total of 500 observed configurations. In each instance, initial locations are sampled uniformly from $[-0.75,0.75]^2$. Observed relative locations between two agents are thus distributed over $[-1.5,1.5]^2$, as displayed in Figure \ref{fig:03ntfr2dexactdatahistogram}.
	Describing the discrete $J$ required 10656 coefficients, resulting from a $30\times 30$ spatial grid for $J_1$ and a $42\times 42$ spatial grid for $J_2$. We use again the energy \eqref{eq:energysigmaDiscrete} for inference with a regularizer on derivatives, with $\lambda_{1}=\lambda_2=10^{-5}$.
	The results are illustrated in Figure \ref{fig:03ntfr2d}. The recovered functions $J_1$ and $J_2$ reproduce the same structural features of the exact ones and the trajectories simulated with the inferred $J$ follow closely the underlying true model (also in this case newly generated initial conditions are considered).
\end{example}

\begin{figure}[tbh]
	\begin{subfigure}{\textwidth}
		\centering
		\includegraphics{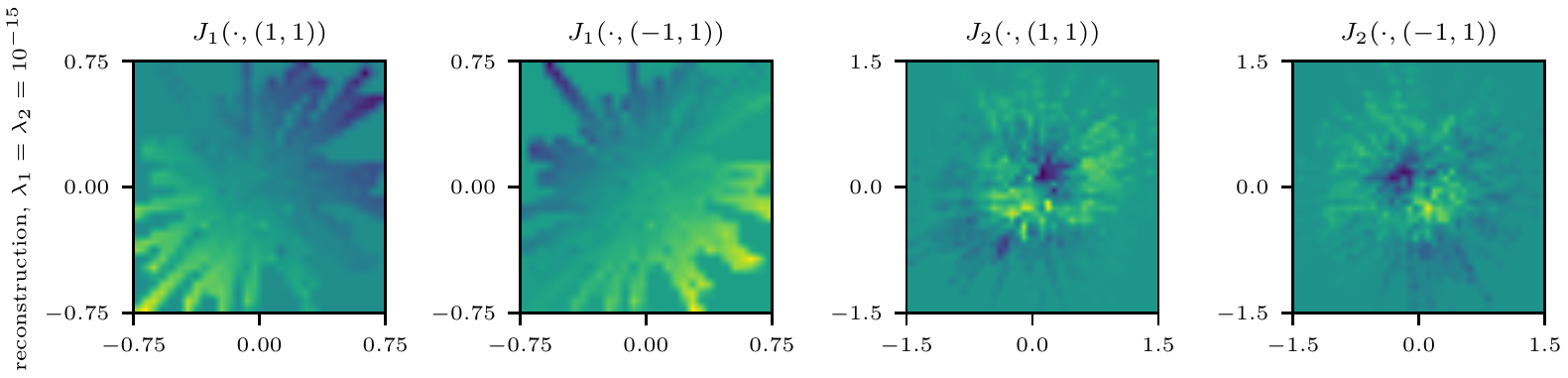}  
		\caption{}
		\label{fig:03ntfr2dexactdatalessregularizedjrecweakreg}
	\end{subfigure}\\
	
	\begin{subfigure}{\textwidth}
		\centering
		\includegraphics{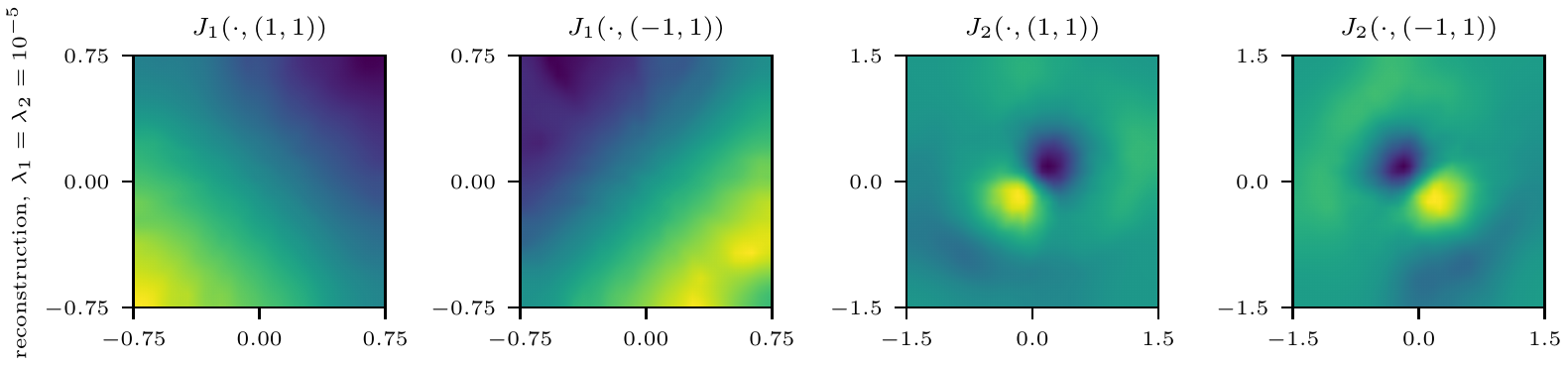}  
		\caption{}
		\label{fig:03ntfr2dexactdatalessregularizedjrecstrongreg}
	\end{subfigure}\\
	
	\begin{subfigure}{\textwidth}
		\centering
		\includegraphics{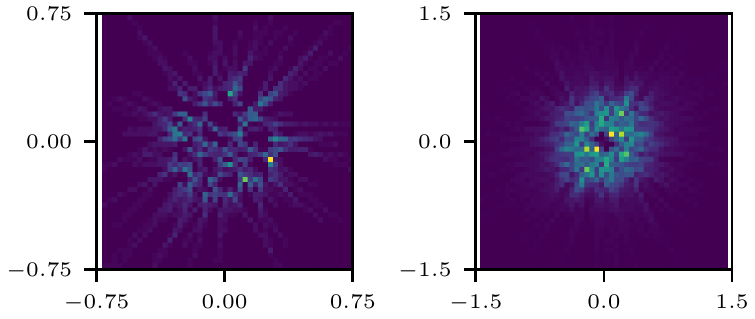}  
		\caption{}
		\label{fig:03ntfr2dexactdatahistogramlessregularized}
	\end{subfigure}
	\caption{Influence of the regularization parameter. %
	Analogous to Figure \ref{fig:03ntfr2d} but with fewer observations, as displayed in (\subref{fig:03ntfr2dexactdatahistogramlessregularized}).
	(\subref{fig:03ntfr2dexactdatalessregularizedjrecweakreg}): reconstruction for low regularization parameters, $\lambda_1=\lambda_2=10^{-15}$.
	(\subref{fig:03ntfr2dexactdatalessregularizedjrecstrongreg}): reconstruction for higher regularization parameters, $\lambda_1=\lambda_2=10^{-5}$. %
	See Example \ref{ex:NumericsUnderreg} for more details.}
	\label{fig:03ntfr2dexactdatalessregularized}
\end{figure}

\begin{example}[Under-regularization]
	\label{ex:NumericsUnderreg}
	As is common in inverse problems, if the regularization parameters $\lambda_1, \lambda_2$ in \eqref{eq:energysigmaDiscreteReg} are chosen too high, one obtains an over-regularized result, where the large penalty for spatial variation leads to a loss of contrast and high-frequency features of $J$.
	Conversely we may experience under-regularization when the parameters are small and the observed data does not cover the learning domain sufficiently, e.g.~when we only observe few realizations of the system.
	Such a case is illustrated in Figure \ref{fig:03ntfr2dexactdatahistogramlessregularized} where the observations are concentrated on a few one-dimensional trajectories.
	For weak regularization, the reconstructed $J$ then tends to be concentrated on these trajectories as well, see Figure \ref{fig:03ntfr2dexactdatalessregularizedjrecweakreg}.
	And while the inferred $J$ may predict the agent behaviour with high accuracy on these trajectories, it will generalize very poorly to different data.
	With stronger regularization, $J$ is extended beyond the trajectories in a meaningful way, see Figure \ref{fig:03ntfr2dexactdatalessregularizedjrecstrongreg}.
\end{example}

\begin{figure}[tbh]
	\begin{subfigure}[b]{.25\textwidth}
		\centering
		\includegraphics{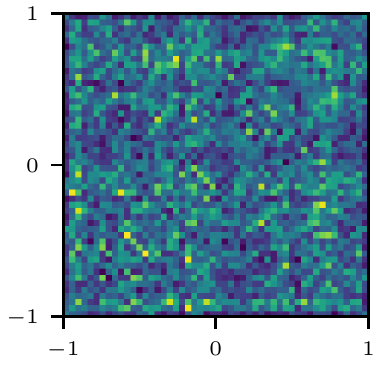}  
		\caption{}
		\label{fig:05fullmodelexamplehistogram}
	\end{subfigure}
	\begin{subfigure}[b]{.5\textwidth}
		\centering
		\includegraphics{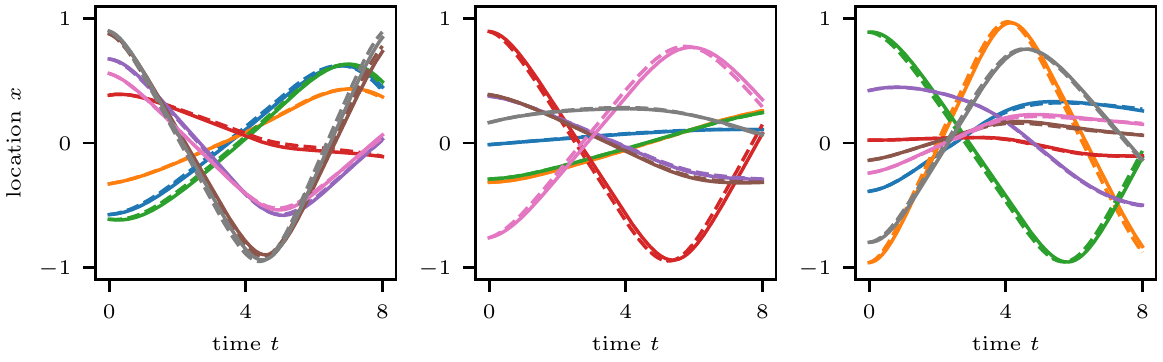}  
		\caption{}
		\label{fig:05fullmodelexampletruevslearn}
	\end{subfigure}\\
	\caption{Inference from $1$d observations generated with \eqref{eq:EntropicGameModel}. %
	(\subref{fig:05fullmodelexamplehistogram}): distribution of observed pairs $(x,x')$, $x \neq x'$. %
	(\subref{fig:05fullmodelexampletruevslearn}): comparison between exact trajectories (solid lines) and inferred model (dashed lines) on new realizations (not part of the training data).}
	\label{fig:fullexample}
\end{figure}

\subsection{An example for differential inference}
\label{sec:NumericsDifferential}
The `differential inference' functional $\energy_{\dot{\sigma}}^{J,N}$ introduced in Section \ref{sec:InferenceDifferential} was constructed in close analogy to \eqref{eq:NewtonEnergy}. We have argued that it may be challenging to apply to real data, as it would require the observation of mixed strategies $\{\sigma_{i}(t_s)\}_{s,i}$ and their temporal derivatives $\{\partial_t \sigma_{i}(t_s)\}_{s,i}$.
As a proof of concept, we now demonstrate that if these observations are indeed available, the numerical set-up described up to now can be easily adapted to $\energy_{\dot{\sigma}}^{J,N}$.
After discretization, $\energy_{\dot{\sigma}}^{J,N}$ reduces to a finite-dimensional quadratic objective.
For a quadratic regularizer, such as the squared $L^2$-norm of the gradient of $J$, minimization then corresponds to solving a linear system.
Alternatively, we can implement the Lipschitz constraint on $J$ which amounts to adding linear constraints to the quadratic problem.
We solve the resulting finite dimensional optimization problem using the operator splitting approach provided by the \textup{OSQP} solver \cite{OSQP}.

As an example, we simulate the entropic regularized model in \eqref{eq:EntropicGameModel} for $U = \{-1,1\}$ and $J \colon \R \times \{-1,1\} \times \R \times \{-1,1\}$ defined by
\begin{align*}
J(x,u,x',u') &= -\frac{1}{2}(u+u')((u+1)x^5+(u-1)(x+x')^3)
\end{align*}
Our choice of $J$ is not motivated by providing a meaningful model for anything, but simply to illustrate that inference works in principle.
We collect data using the same sub-sampling approach presented above: $8$ agents, $100$ realizations for a time span of 0.2, initial conditions uniformly sampled from $[-1,1]$, only every other observation is kept for a total of 500 input configurations. Observed positions $(x,x')$ with $x \neq x'$ are displayed in Figure \ref{fig:05fullmodelexamplehistogram}. No particular structure is assumed for $J$.

As discussed in Remark \ref{rem:NonUniqueness}, the optimal inferred $J$ is not unique.
As a remedy, we normalize the reconstructed $J^r$ such that $J^r(x,1,x',-1) = J^r(x,-1,x',1) = 0$. Comparisons for trajectories are reported in Figure \ref{fig:05fullmodelexampletruevslearn} for new randomly selected initial conditions with $T = 8$: trajectories with the reconstructed $J^r$ follow closely the trajectories generated with the true $J$.

\subsection{Learning for Newtonian models}
\label{sec:NumericsNewton}
In Example \ref{ex:FastReactionVsNewton} we have shown that entropic game models with fast reaction can also capture Newtonian interaction models. In this section we perform inference with an entropic game model ansatz on data generated by a Newtonian model. As an example, we pick
\begin{align}\label{eq:NewtonTrajectories}
\partial_t x_i(t) = \frac{1}{N} \sum_{j=1}^N f(x_i(t),x_j(t))
\qquad
\tn{with} \qquad
f(x,x') = - x - \frac{\tanh(5(x'-x))}{(1+\|x'-x\|)^2}
\end{align}
where the function $\tanh$ is assumed to be applied componentwise. The first term in $f$ encourages agents to move towards the origin, the second term implements a pairwise repulsion at close range.
Trajectory data is collected for a two dimensional system: for $N=8$ agents we simulate 100 instances of \eqref{eq:NewtonTrajectories} over a time span of $0.2$, collecting 5 data points per instance to total 500 observed configurations. In each instance, initial locations are sampled uniformly from $[-0.75,0.75]^2$. Observed relative locations between two agents are thus distributed over $[-1.5,1.5]^2$, as displayed in Figure \ref{fig:04ntfr2dNewtondatahistogram}.

We use the parametrization \eqref{eq:JSplitting} for the payoff function. Unlike in the the previous examples, the original model does not involve a strategy set $U$ or a map $e$. Thus, prior to inference, we have to make informed choices for these.
As in earlier examples, we choose $U \subset \R^2$ and $e(x,u)=u$ (i.e.~pure strategies correspond directly to velocities) and in particular we pick $U = \{u_1,\dots,u_K\}$ such that the convex hull of $U$ in $\R^2$ contains (almost) all observed velocities, meaning that they can be reproduced by suitable mixed strategies. So for (almost) every $i$ and $s$ we can write $\partial_t x_i(t_s) = \sum_k u_k\sigma_{k}$ for some mixed strategy $\sigma$ over $U$.

\begin{figure}[tbh]
	\begin{subfigure}{\textwidth}
		\centering
		\includegraphics{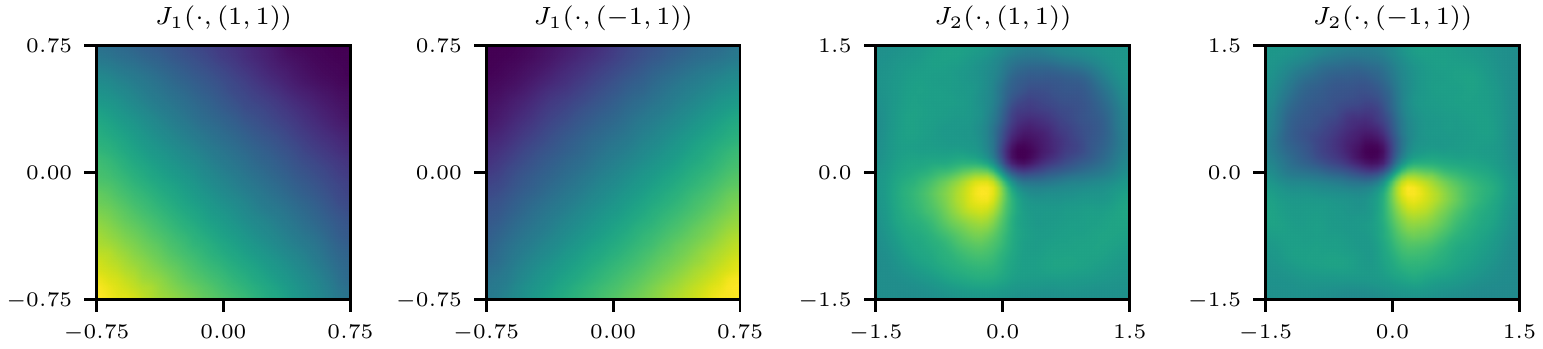}  
		\caption{}
		\label{fig:04ntfr2dNewtondatajrec}
	\end{subfigure}\\
	\begin{subfigure}{0.49\textwidth}
		\centering
		\includegraphics{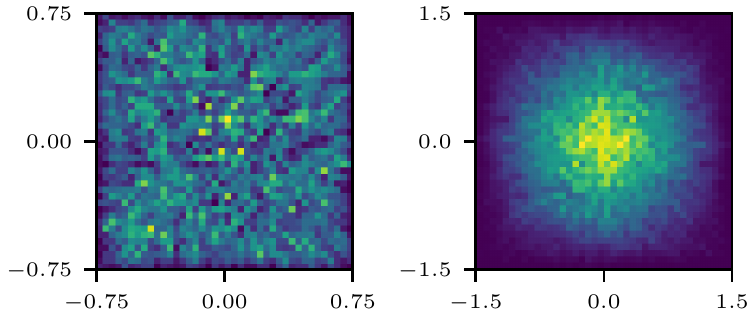}  
		\caption{}
		\label{fig:04ntfr2dNewtondatahistogram}
	\end{subfigure}
	\begin{subfigure}{0.49\textwidth}
		\centering
		\includegraphics{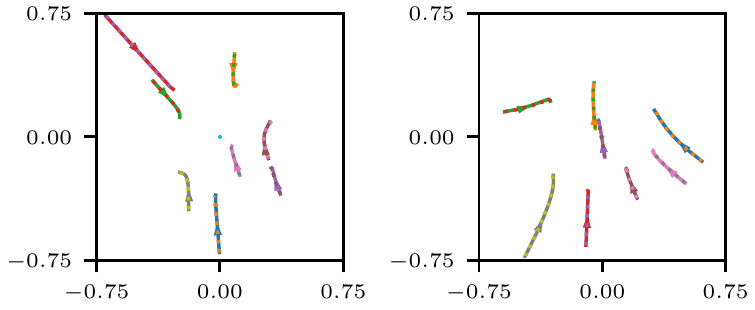}  
		\caption{}
		\label{fig:04ntfr2dNewtondatatruevslearn}
	\end{subfigure}
	\caption{Inference from $2$d observations of a Newtonian system. Top rows: inferred payoff function for pure strategies $(1,1)$ and $(-1,1)$, bottom line: distribution of the data in the training set (agents locations and pairwise distances) and comparison between exact Newtonian trajectories (solid lines) and trajectories generated by the inferred game model (dashed lines).}
	\label{fig:Newton2d}
\end{figure}

\begin{example}
	\label{ex:Newtond2}
	We pick $U = \{(1,1),(-1,1),(-1,-1),(1,-1)\} \subset \R^2$, optimize the velocity based energy \eqref{eq:energyxdotDiscrete} because no mixed strategies are provided by observations and consider a regularizer on derivatives as described in Section \ref{sec:NumericsSetup}, with $\lambda_{1}=\lambda_2=10^{-5}$. Using the same setup as Example \ref{ex:NTFRd2} we describe the discrete $J$ using 10656 coefficients ($30\times 30$ grid for $J_1$ and $42\times 42$ grid for $J_2$). The results are illustrated in Figure \ref{fig:Newton2d}. The recovered functions $J_1$ and $J_2$ have the same structure as the ones in Example \ref{ex:NTFRd2}: the qualitative behaviour of the two systems is indeed the same. In Figure \ref{fig:04ntfr2dNewtondatatruevslearn} we simulate a couple of dynamics with newly generated initial conditions and observe again how Newtonian trajectories and game trajectories computed through the reconstructed $J$ are essentially the same. 
\end{example}

\begin{figure}
	\centering
	\includegraphics{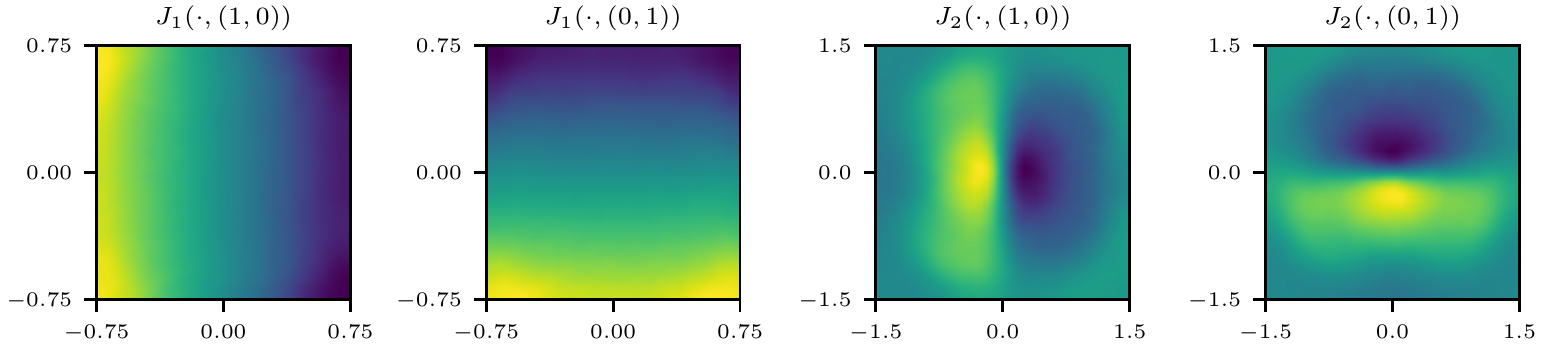}  
	\caption{Inferred payoff function via $\energy_v^N$-based energy for pure strategies $(1,0)$ and $(0,1)$ (same Newtonian input data as in Figure \ref{fig:04ntfr2dNewtondatatruevslearn}).}
	\label{fig:04ntfr2dNewtondatadifferentUjrec}
\end{figure}

\begin{example}
	\label{ex:Newtond2otherU}
	In most cases there will not be a unique preferred choice for $U$ and various options could be reasonable.
	We now demonstrate robustness of the inference results between different choices that are essentially equivalent.
	Thus, we repeat Example \ref{ex:Newtond2} changing $U$ into $U = \{(1,0),(0,1),(-1,0),(0,-1)\}$. The results are illustrated in Figure \ref{fig:04ntfr2dNewtondatadifferentUjrec}. Essentially, a rotation of $U$ caused a corresponding rotation in the reconstructed functions, without changing the underlying behaviour that is being described.
\end{example}

\begin{figure}
	\centering
	\includegraphics{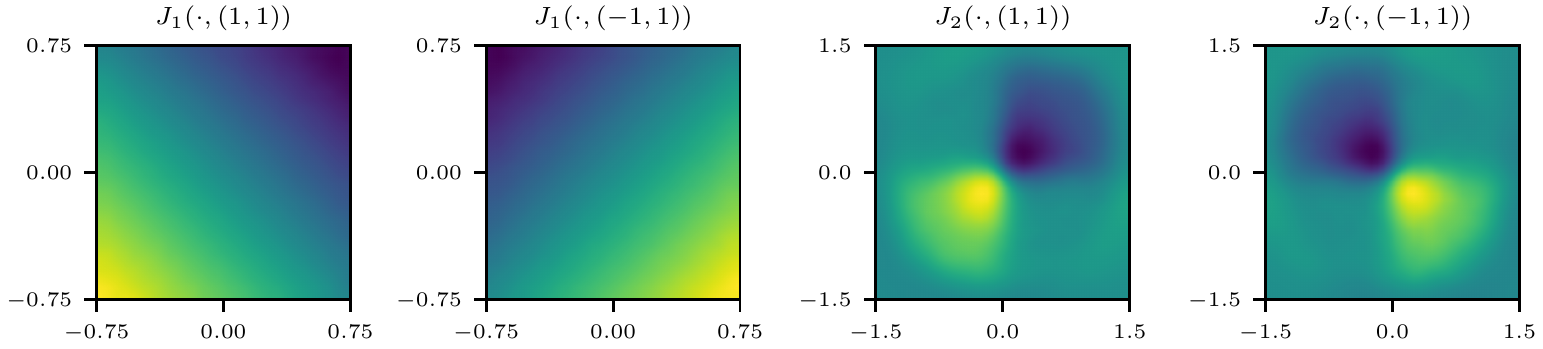}
	\caption{Inferred payoff function via $\energy_\sigma^N$-based energy for pure strategies $(1,1)$ and $(-1,1)$ (same Newtonian input data as in Figure \ref{fig:04ntfr2dNewtondatatruevslearn}). To compare with $\energy_v^N$-based reconstruction in Figure \ref{fig:04ntfr2dNewtondatajrec}.}
	\label{fig:04ntfr2dNewtondatafictituoussigmajrec}
\end{figure}

\begin{example}
	\label{ex:Newtond2sigma}
	In the previous two examples we used the velocity-based functional $\energy_v^N$ \eqref{eq:energyxdotDiscrete} for inference.
	As discussed in Section \ref{sec:NumericsSetup} we can also optimize the error functional $\energy_\sigma^N$ \eqref{eq:energysigmaDiscrete} upon providing some suitable mixed strategies via \eqref{eq:ReconstructSigma}.
	We repeat Example \ref{ex:Newtond2} with this approach. The results are illustrated in Figure \ref{fig:04ntfr2dNewtondatafictituoussigmajrec}: the reconstructed functions closely resemble the previous reconstruction provided in Figure \ref{fig:04ntfr2dNewtondatajrec}.
\end{example}

\subsection{Pedestrian dynamics}
\label{sec:NumericsPedestrian}

The examples presented in Sections \ref{sec:NumericsNTFR} and \ref{sec:NumericsDifferential} were based on observations generated with the underlying entropic game model with known $U$ and $e$. The examples in Section \ref{sec:NumericsNewton} were based on observations generated by Newtonian models, which by virtue of Example \ref{ex:FastReactionVsNewton} are known to be covered by the entropic game models, if suitable $U$ and $e$ are chosen.
Here we consider a more challenging scenario, trying to learn interaction rules between pedestrians in a model adapted from \cite{bailo2018pedestrian, Degond2013}.
This model attempts to describe `rational' agents (pedestrians) that avoid collisions with other agents not merely via a repulsion term involving the other agents' current locations, but by explicit anticipation of their trajectories.
Related datasets and models in the computer vision community can be found, for instance, in \cite{5459260,PedestrianDataset2019,GameTheoryTransportModel2021}. It will be interesting future work to see if our model can be trained on such data.
Note that this differs from the setting where pedestrians in high-density and high-stress situations are modeled as a non-sentient fluid that is solely driven by external gradients and internal pressure, see for instance \cite{BellomoCrowdReview11,CaMaWo2016} for reviews on the literature of micro- and macroscopic crowd motion models.

\begin{figure}[tbh]
	\centering
	\includegraphics[width=0.5\linewidth]{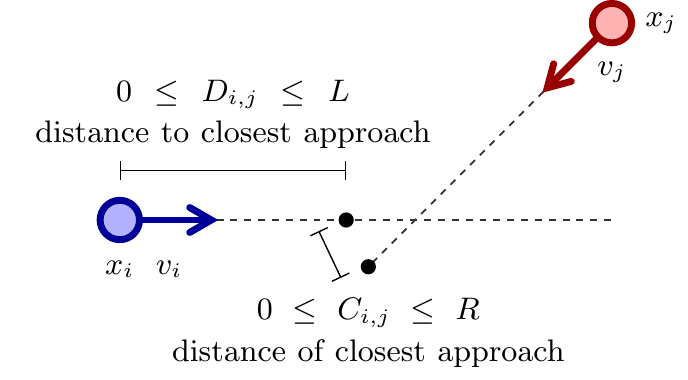}
	\caption{Adapted from \cite{bailo2018pedestrian}}
	\label{fig:heuristics}
\end{figure}

\paragraph{Forward model.}
We now describe the model for pedestrian movement that we use to generate observations. It is slightly adapted from \cite{bailo2018pedestrian, Degond2013}, to which we refer for more details and motivation of the quantities we re-introduce here. We consider $N$ pedestrians in $\R^2$, each of which with desired target velocity $\bar{v}_i \in \R^2$, $i=1,\ldots,N$. The dynamic is described by the coupled system of second order equations
\begin{align}\label{eq:forwardPedestrians}
\left\{
\begin{aligned}
\partial_t x_i(t) &= v_i(t), \\
\partial_t v_i(t) &= -\nabla_{v_i} \Phi_i(x_1(t),\ldots,x_N(t),v_1(t),\ldots,v_N(t)), \\
\|v_i(t)\| & = 1
\end{aligned}
\right.
\end{align}
where the third line means that we confine the gradient movement of the velocities in the second line to the unit sphere. This reduces the dimension of the strategy space and thus simplifies inference (see below).
The potential $\Phi_i$ is designed to steer the velocity towards the target velocity $\bar{v}_i$ while at the same time trying to avoid as much as possible close encounters with other pedestrians. The potential is computed from pairwise interaction terms that involve anticipated (near-)collision events. A key assumption is that agents are able to evaluate at each time the current locations and velocities of the other agents in their cone of vision, i.e.,~they fully know the relevant status of the system. Thus, for locations $x_1, \dots, x_N$ and velocities $v_1, \dots, v_N$, each agent $i$ assumes the others' velocities to be fixed and can evaluate ``optimality'' of a new velocity $v$ through the following quantities:
\begin{align}
D_{i,j}(v) &= -\frac{(x_j-x_i)\cdot(v_j-v)}{\|v_j-v\|^2}\|v\| \\
C_{i,j}(v) &= \left( \|x_j-x_i\|^2 - \frac{((x_j-x_i)\cdot(v_j-v))^2}{\|v_j-v\|^2}\right)^{1/2} \\
S_{i,j}(v) &=
\left\{
\begin{aligned}
1 \quad&\text{ if } \frac{(x_j-x_i)\cdot v}{\|x_j-x_i\|\|v\|} > \cos\left(\frac{7}{12}\pi\right) \text{ and } D_{i,j}(v) > 0 \\
0 \quad&\text{ else }
\end{aligned}
\right.
\end{align}
These are illustrated in Figure \ref{fig:heuristics}.
$D_{i,j}$ is the travelling distance of $i$ to the closest encounter with $j$, based on velocities $v$ and $v_j$. It is negative if the the closest encounter would have happened in the past.
$C_{i,j}$ is their distance realized at that closest encounter and $S_{i,j}$ counts whether agent $j$ is visible to $i$ and getting closer.
Note that $D_{i,j}$, $C_{i,j}$ and $S_j$ are functions of the other agent's location and velocity. From the perspective of agent $i$ these are considered parameters and thus we drop them in the explicit notation of arguments.
For a parameter $a > 0$ define the quadratic penalty
\begin{align*}
\phi_a(x) =
\begin{cases}
\left(\frac{x}{a}-1\right)^2 & \tn{if } x\leq a, \\
0 & \tn{else,}
\end{cases}
\end{align*}
and then set the $i$-th potential to be
\begin{align}\label{eq:potentialPhi}
\Phi_i(v) = k_1 \|v-\bar{v}_i\|^2 + \frac{k_2}{\sum_{j \neq i} S_{i,j}(v)}\sum_{j \neq i} S_{i,j}(v) \cdot \phi_R(C_{i,j}(v)) \cdot \phi_L(D_{i,j}(v))
\end{align}
for some parameters $k_1, k_2, L, R > 0$. Agent $i$ will then evolve its velocity based on the gradient of $\Phi_i$, which balances the desire to stay close to the target velocity $\bar{v}_i$ and the avoidance of close encounters with other agents.
The second term penalizes anticipated minimal distances $C_{i,j}$ below $R$, that happen on a distance horizon of $L$, with a stronger penalty if the anticipated encounter is closer.

\paragraph{State space and strategy space.}
System \eqref{eq:forwardPedestrians} is simulated with potentials $\Phi_i$ defined as in \eqref{eq:potentialPhi}.
By the third line of \eqref{eq:forwardPedestrians} we can write $v_i(t) = (\cos(\theta_i(t)), \sin(\theta_i(t)))$ at each $t$ for a suitable time depended angle $\theta_i$. Observations consist of locations $\{x_i(t_s)\}_{s,i}$ and velocities $\{v_i(t_s)\}_{s,i}$ for $s \in \{0,\dots,S\}$ and $i \in \{1,\dots,N\}$. In particular, observations on velocities can be recast into observations of the angles $\{\theta_i(t_s)\}_{s,i}$, $\theta_i(t_s) \in S^1$, so that $v_i(t_s) = (\cos(\theta_i(t_s)),\sin(\theta_i(t_s)))$.
Likewise, the desired velocity $\bar{v}_i$ can be encoded by an orientation $\bar{\theta}_i \in S^1$.

We want to model the system as an undisclosed fast-reaction entropic game. Consequently, we must choose a suitable `location' space, strategy space and map $e$ for the agents.
In the undisclosed setting, the choice of strategy of an agent does not depend on the choices of strategies of the other agents.
However, for pedestrians, the movement choices of one agent clearly depend on the current velocities of other agents. This is explicit in the above model, and of course also true in reality, where we can learn about other persons' current movement by their orientation (people walk forward).
Consequently, the orientations $\theta_i$ cannot be interpreted as strategies, and should be considered as part of the agents' locations.
Instead, the second line of \eqref{eq:forwardPedestrians} suggests that strategies describe the agents' choice to change their orientations.
Finally, different agents may have different desired orientations $\bar{\theta}_i$, i.e.,~we may have different `species' of agents. As discussed in Remark \ref{rem:multipleSpecies} this can be formally incorporated into our model by enriching the physical space.

Consequently, as physical space we use $\R^2 \times S^1 \times S^1$ where the first component describes the physical location of the agent, the second component their current orientation and the third component their desired orientation.

As strategy space we pick $U \subset \{-C,+C\}$ for some $C>0$, where a pure strategy $u \in U$ corresponds to the desire to change the orientation with rate $u$.
$C$ should be picked sufficiently large to capture the maximally observed changes of orientation of the agents.

In summary, in this particular instance of \eqref{eq:QuasiStaticGameModel} we choose
\begin{align*}
e((x,\theta,\bar{\theta}),u) =
\begin{pmatrix}
\cos(\theta),
\sin(\theta),
u,
0
\end{pmatrix},
\end{align*}
and consequently obtain the full system
\begin{align}
\label{eq:PedestriansExplicitJSystem}
\left\{
\begin{aligned}
\partial_t x_i(t) &= (\cos(\theta_i(t)),\sin(\theta_i(t)))  \\
\partial_t \theta_i(t) 	&= \vartheta_i^J\big((x_1(t), \theta_1(t),\bar{\theta}_1(t)),\dots,(x_N(t), \theta_N(t), \bar{\theta}_N(t))\big)\\
&\assign \frac1K \sum_{k=1}^K u_k \cdot \sigma_{i}^J\big((x_1(t), \theta_1(t),\bar{\theta}_1(t)),\dots,(x_N(t), \theta_N(t), \bar{\theta}_N(t))\big)(u_k) \\
\partial_t \bar{\theta}_i(t) &= 0
\end{aligned}
\right.
\end{align}

\paragraph{Ansatz for payoff function.}
For the payoff function, in analogy to \eqref{eq:JSplitting}, we make the following ansatz:
\begin{align}
\label{eq:JstructurePedestrians}
J((x,\theta, \bar{\theta}),u,(x',\theta',\bar{\theta}')) = J_1(\theta, u, \bar{\theta}) + J_2(R_{-\theta} \cdot (x'-x), \theta'-\theta, u)
\end{align}
where $R_{-\theta}$ denotes the rotation matrix by angle $-\theta$.
$J_1$ is intended to recover the drive towards the desired orientation $\bar{\theta}_i$ and $J_2$ the pairwise interaction between agents.
We have added the natural assumption that $J_2$ is invariant under translations and rotations. For each agent we may choose a coordinate system such that they are currently in the origin ($x=0$), heading right ($\theta=0$), whereas the other agent is located at $y=R_{-\theta} (x'-x)$ with orientation $\Delta \theta=\theta'-\theta$.
Further dimensionality reduction can be obtained by setting $J_2$ to zero, when the other agent is not within the agent's field of vision. Alternatively, we can attempt to infer the field of vision from the observations during learning.

\paragraph{Discrete inference functional.}
We approximate the missing velocities $\{\partial_t \theta_i(t_s)\}_{s,i}$ by finite differences,
\[
\partial_t \theta_i(t_s) \approx \frac{\theta_i(t_s)-\theta_i(t_{s-1})}{t_s-t_{s-1}} \quad \text{ for all }s=1,\dots,S, \text{ and } i = 1,\dots,N,
\]
and the corresponding velocity based inference functional \eqref{eq:energyxdot} is given by
\begin{align}
\label{eq:energyxdotDiscretePedestrians}
\energy_v^N(J) = \frac{1}{SN} \sum_{s=1}^S \sum_{i=1}^N \left| \vartheta_i^J(t_s) - \partial_t \theta_i(t_s)\right|^2
\end{align}
where $\vartheta_i^J$ is introduced in \eqref{eq:PedestriansExplicitJSystem}.

\begin{figure}[tbh]
	\begin{subfigure}{0.49\textwidth}
		\centering
		\includegraphics{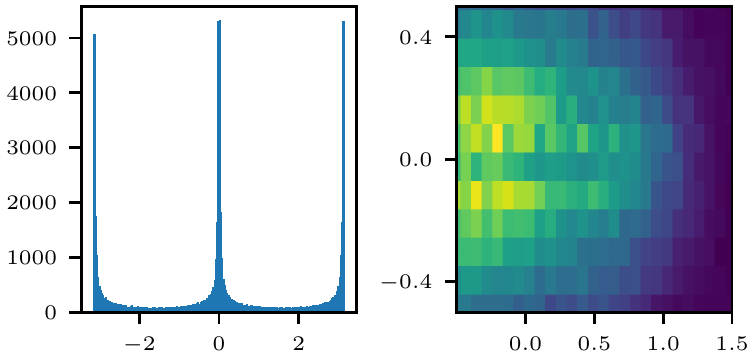}  
		\caption{Distribution of observed data. Left, distribution of observed directions of agents (some minimal observations are available everywhere). Right, distribution of relative configurations between agents (assuming reference agent sitting at $(0,0)$).}
		\label{fig:06pedestrians2histogram}
	\end{subfigure}
	\begin{subfigure}{0.49\textwidth}
		\centering
		\includegraphics{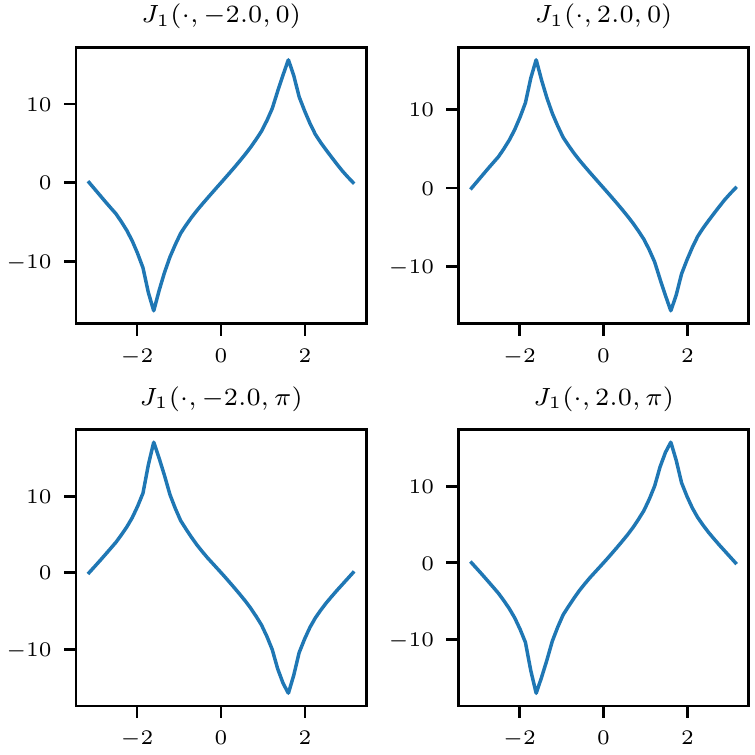}  
		\caption{}
		\label{fig:06pedestrians2Jself}
	\end{subfigure}\\
	\begin{subfigure}{0.49\textwidth}
		\centering
		\includegraphics{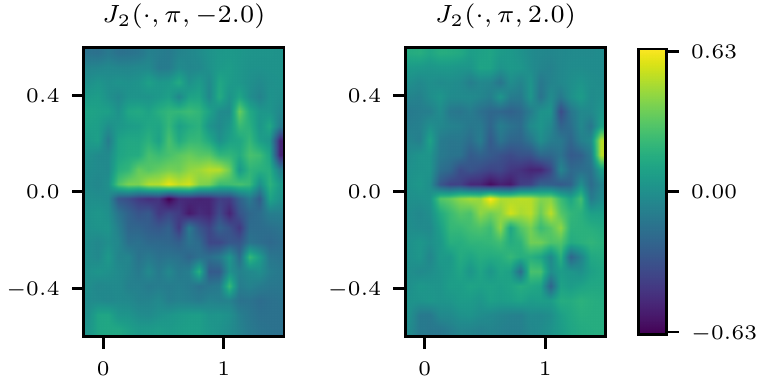}  
		\caption{}
		\label{fig:06pedestrians2Jdist}
	\end{subfigure}
	\begin{subfigure}{0.49\textwidth}
		\centering
		\includegraphics{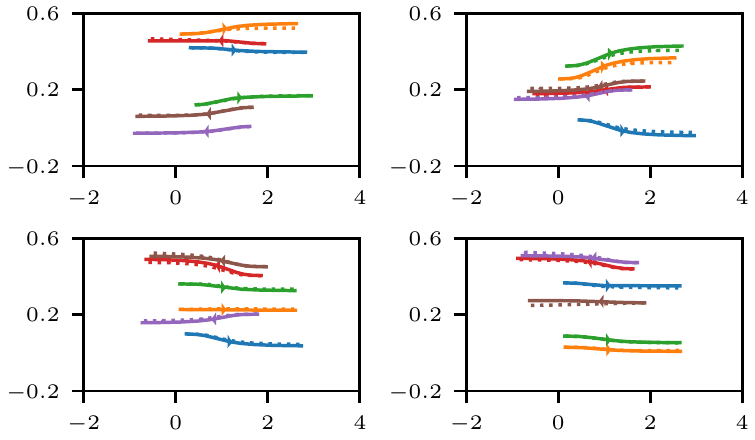}  
		\caption{}
		\label{fig:06pedestrians2truevslearn}
	\end{subfigure}
	\caption{Inference from $2$d observations of a pedestrian model. (\subref{fig:06pedestrians2histogram}): distribution of the data in the training set. %
	(\subref{fig:06pedestrians2Jself}, \subref{fig:06pedestrians2Jdist}): reconstruction of the payoff function (section for $J_2$ at $\Delta \theta = \pi$). %
	(\subref{fig:06pedestrians2truevslearn}): comparison between exact trajectories (solid lines) and trajectories generated by the inferred model (dashed lines) for four new realizations (not part of the training data).}
	\label{fig:pedestrians2}
\end{figure}

\begin{example}
Analogously to previous examples, we simulate multiple realization of \eqref{eq:forwardPedestrians}: for each instance we consider 6 agents, 3 starting at random locations inside $[0, 0.5] \times [0, 0.5]$ with desired direction $\bar{\theta} = 0$, while the other 3 start somewhere within $[1.0, 1.5] \times [0, 0.5]$ with target direction $\bar{\theta} = \pi$. We simulate $400$ instances of \eqref{eq:forwardPedestrians}, assuming for $300$ of them a slight perturbation of the target direction as initial direction, and sampling randomly in $(-\pi,\pi)$ for the initial directions of the remaining $100$ runs (to observe the behaviour at different directions). Each run we perform $500$ steps of an explicit Euler-stepping with $\Delta t=0.005$. Observations are then sub-sampled every $20$-th step.  The final distribution of directions within the training data is reported in Figure \ref{fig:06pedestrians2histogram}.

In the reconstruction we consider $U = \{-2,2\}$. Energy \eqref{eq:energyxdotDiscretePedestrians} is then optimized coupled with regularizers in the same fashion as in \eqref{eq:energysigmaDiscreteReg}, using a $30$ nodes regular grid over $[-\pi,\pi]$ for $J_1$ and a $20 \times 20 \times 20$ grid over $[-0.15, 1.5] \times [-0.6,0.6] \times [-\pi,\pi]$ for $J_2$ (we assume a priori that agents outside my cone of vision go unseen). Results are reported in Figure \ref{fig:pedestrians2}.
The self-interaction function $J_1$, Figure \ref{fig:06pedestrians2Jself}, correctly dictates the steering of the direction towards the desired direction. Observe how the self-driving force to go back to the desired direction is maximal around $\pm \pi/2$, as expected when looking at potential \eqref{eq:potentialPhi}. The interaction function $J_2$, Figure \ref{fig:06pedestrians2Jdist}, encodes the expected deviation for avoiding a close interaction: when approaching an agent to my left I favour deviations to the right and vice versa. Eventually, we compare in Figure \ref{fig:06pedestrians2truevslearn} simulated trajectories obtained with the ground truth model and the inferred one, starting at locations outside the training set. The two models reproduce the same behaviour.
\end{example}

\section{Conclusion and outlook}
\label{sec:Conclusion}
In this article we proposed several modifications to the model of spatially inhomogeneous evolutionary games as introduced by \cite{AmFoMoSa2018}.
We added entropic regularization to obtain more robust trajectories, considered the undisclosed setting and then derived the quasi-static fast-reaction limit.
Relying on results from \cite{AmFoMoSa2018} we established well-posedness and a mean-field limit for all considered model variants.
The resulting new family of multi-agent interaction models vastly generalizes first and second-order Newtonian models.

We then posed the inverse problem of inferring the agent payoff function from observed interactions. We gave several variational formulations that seek to minimize the discrepancy between the observed particle velocities and those predicted by the payoff function. The mismatch functionals provide quantitative control on the ability of the reconstructed payoff function to replicate the observed trajectories and the functionals are (under suitable assumptions) consistent in the limit of increasing observed data.

Finally, we demonstrated that the inference scheme is not merely limited to abstract analysis but can be implemented numerically. Our computational examples included the inference from models for which a true payoff function was available (and known), examples where this was not the case (e.g.~data generated by a Newtonian model or a simple `rational' model for pedestrian movement), and examples with simulated noise. These showed that our scheme can infer meaningful payoff functions from a broad variety of input data.

A logical next step would be the numerical application to real data. Here one will need to be particularly careful about choosing the family of models to match (i.e.~the complexity of the space of pure strategies and the generality of the form of the payoff function). On one hand, a relatively simple class of models will be numerically robust and inexpensive but involve considerable modelling bias due to a potential lack of flexibility. On the other hand, a rather general class of models provides more descriptive power but will require many observations for meaningful inference and be numerically more challenging. On top of that, real data will be corrupted by noise and thus a more detailed analysis of learning from noisy data will become crucial (in the spirit of Example \ref{ex:NTFRNoise}).

\section*{Acknowledgement}
M.F. acknowledges the support of the DFG Project "Identification of Energies from Observation of Evolutions" and the DFG SPP 1962 "Non-smooth and Complementarity-based Distributed Parameter Systems: Simulation and Hierarchical Optimization". 
M.B. and M.F. acknowledge the support of the DFG Project "Shearlet approximation of brittle fracture evolutions" within the 
SFB Transregio 109 "Discretisation in Geometry and Dynamics".
M.B. and B.S. acknowledge support by the Emmy Noether Programme of the DFG.

\appendix

\section{Mathematical analysis}
\label{sec:Analysis}

\subsection{Well-posedness for entropic games}
\label{sec:AnalysisEntropic}
In this Section we prove the key technical ingredients involved in the proof of Theorem \ref{thm:wellPosedEntropic} in Section \ref{sec:ModelEntropic}, which establishes the well-posedness of the modified entropic dynamics given by \eqref{eq:EntropicGameModel}. 
The two subsequent Lemmas establish that $f^{J,\veps}=f^J + f^\veps$, \eqref{eq:BanachFParts}, is Lipschitz continuous in the chosen setting for strategies in $\sigmaSet$.

\begin{lemma}[$f^\veps$ is $L^p_\eta$-Lipschitz continuous on $\sigmaSet$]
	\label{lem:fEpsLip}
	For $\sigma_1, \sigma_2 \in \sigmaSet$ and $f^\veps$ defined in \eqref{eq:fH}, we have
	\begin{align}
	\|f^\veps(\sigma_1)-f^\veps(\sigma_2)\|_{L^p_\eta(U)} \leq L \cdot \|\sigma_1-\sigma_2\|_{L^p_\eta(U)}
	\end{align}
	with $L = L(a,b)$.
\end{lemma}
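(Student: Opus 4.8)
The plan is to exploit the uniform two-sided bound $\sigma \in [a,b]$ (with $a>0$) enforced by membership in $\sigmaSet$, which keeps every quantity away from the logarithmic singularity at $0$ and makes each ingredient of $f^\veps$ Lipschitz on $[a,b]$. First I would rewrite $f^\veps$ in a cleaner form: setting $\phi(t) \assign t\log t$ and $H(\sigma) \assign \int_U \phi(\sigma)\,\diff\eta$ (the entropy of Remark \ref{rem:EntropicGradient}), one has $f^\veps(\sigma) = \veps\big[-\phi(\sigma) + H(\sigma)\,\sigma\big]$, where $\phi(\sigma)$ is understood pointwise. The difference $f^\veps(\sigma_1)-f^\veps(\sigma_2)$ then splits into a \emph{pointwise} term $-(\phi(\sigma_1)-\phi(\sigma_2))$ and a \emph{nonlinear} term $H(\sigma_1)\sigma_1 - H(\sigma_2)\sigma_2$, which I would estimate separately.

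For the pointwise term, I would note that $\phi'(t) = 1+\log t$ is bounded on $[a,b]$ by $L_1 \assign \max\{|1+\log a|,\,|1+\log b|\}$, so $\phi$ is $L_1$-Lipschitz there; applying this $\eta$-a.e.\ and integrating the $p$-th power gives $\|\phi(\sigma_1)-\phi(\sigma_2)\|_{L^p_\eta(U)} \leq L_1\,\|\sigma_1-\sigma_2\|_{L^p_\eta(U)}$. For the scalar functional $H$, the same bounds yield two facts, both using that $\eta$ is a probability measure: a uniform bound $|H(\sigma)| \leq C_H \assign \sup_{t\in[a,b]}|\phi(t)|$, and Lipschitz continuity $|H(\sigma_1)-H(\sigma_2)| \leq \int_U|\phi(\sigma_1)-\phi(\sigma_2)|\,\diff\eta \leq L_1\|\sigma_1-\sigma_2\|_{L^1_\eta(U)} \leq L_1\|\sigma_1-\sigma_2\|_{L^p_\eta(U)}$, where the last step uses the embedding $L^p_\eta \hookrightarrow L^1_\eta$ valid since $\eta(U)=1$.

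To treat the nonlinear term I would use the telescoping identity
\begin{align*}
H(\sigma_1)\sigma_1 - H(\sigma_2)\sigma_2 = H(\sigma_1)\,(\sigma_1-\sigma_2) + \big(H(\sigma_1)-H(\sigma_2)\big)\,\sigma_2,
\end{align*}
take $L^p_\eta$ norms, and bound the two summands using $|H(\sigma_1)|\leq C_H$, the Lipschitz estimate on $H$, and $\|\sigma_2\|_{L^p_\eta(U)}\leq b$ (again because $\eta$ is a probability measure and $\sigma_2\leq b$). This gives $\|H(\sigma_1)\sigma_1-H(\sigma_2)\sigma_2\|_{L^p_\eta(U)} \leq (C_H + L_1 b)\,\|\sigma_1-\sigma_2\|_{L^p_\eta(U)}$. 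Combining with the pointwise term through the triangle inequality yields the claim with $L = \veps\,(L_1 + C_H + L_1 b)$, a constant depending only on $a$, $b$ (and the fixed $\veps$).

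The main obstacle is the nonlinear product $H(\sigma)\,\sigma$: unlike the purely pointwise contribution $\phi(\sigma)$, it cannot be controlled by a pointwise Lipschitz bound, since $H(\sigma)$ couples the whole density. The product-rule decomposition combined with the \emph{simultaneous} uniform boundedness of the entropy $H$ and of the densities themselves — both of which hinge on the confinement to $[a,b]$, in particular on $a>0$ to avoid the singularity of $\log$ — is exactly what is needed to close the estimate with a constant independent of the particular $\sigma_1,\sigma_2$.
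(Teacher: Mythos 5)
Your proof is correct and follows essentially the same route as the paper's: both rewrite $f^\veps$ via $\phi(t)=t\log t$, bound the pointwise term by the Lipschitz constant of $\phi$ on $[a,b]$, and handle the product $H(\sigma)\sigma$ with the same telescoping decomposition, using the uniform bound on $H$, the bound $\sigma\leq b$, and the $L^p_\eta\hookrightarrow L^1_\eta$ embedding (Jensen) for the entropy difference. The only (cosmetic) difference is that you carry the factor $\veps$ explicitly in the final constant, which the paper's computation silently absorbs.
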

\begin{proof}
	Define $G(x) \assign x\log x$, let $L_{a,b}$ be its Lipschitz constant and $M_{a,b}$ its maximal absolute value over the interval $[a,b]$.
	For any $\sigma_1,\sigma_2 \in \sigmaSet$, $L^p_\eta$-Lipschitz continuity can be proved via direct computations:
	\[
	\begin{aligned}
	&\|f^\veps(\sigma_2)-f^\veps(\sigma_1)\|_{L^p_\eta(U)} \\
	= {} & \left\| -G(\sigma_2(\cdot)) + G(\sigma_1(\cdot))
	+ \sigma_2(\cdot) \int_U G(\sigma_2(v))\,d\eta(v)
	- \sigma_1(\cdot) \int_U G(\sigma_1(v))\,d\eta(v)
	\right\|_{L^p_\eta(U)} \\
	\leq {} & \left\| G(\sigma_2(\cdot)) - G(\sigma_1(\cdot)) \right\|_{L^p_\eta(U)}
	+ \left\| \sigma_1(\cdot) \int_U |G(\sigma_2(v)) - G(\sigma_1(v))|\,d\eta(v) \right\|_{L^p_\eta(U)} \\
	& + \left\| |\sigma_2(\cdot)-\sigma_1(\cdot)| \cdot \int_U G(\sigma_2(v))\,d\eta(v) \right\|_{L^p_\eta(U)} \\
	\leq {} &  (L_{a,b} +  b \cdot L_{a,b} + M_{a,b} ) \cdot \|\sigma_2-\sigma_1\|_{L^p_\eta(U)}
	\end{aligned}
	\]
	where we used $\sigma_1(\cdot) \leq b$ and Jensen's inequality in the second term.
\end{proof}

\begin{lemma}[$f^J$ is Lipschitz continuous]
	\label{lem:fJLip}
	For $J \in \fullX$, let $f^J$ be defined as in \eqref{eq:fJ}. Then, $f^J$ is Lipschitz continuous in all four arguments, i.e., given $x_i, x_i' \in \R^d$ and $\sigma_i, \sigma_i' \in \sigmaSet$, $i = 1,2$, we have
	\[
	\begin{aligned}
	\|f^J(x_2,\sigma_2,x_2',\sigma_2')-f^J(x_1,\sigma_1,x_1',\sigma_1')\|_{L^p_\eta(U)} \leq L \cdot \bigg[ &\|x_2-x_1\| + \|\sigma_2-\sigma_1\|_{L^p_\eta(U)} \\
	+ &\|x_2'-x_1'\| + \|\sigma_2'-\sigma_1'\|_{L^p_\eta(U)} \bigg]
	\end{aligned}
	\]
	with $L = L(J,b)$.
\end{lemma}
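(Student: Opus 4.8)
The plan is to exploit the product structure of $f^J$. Writing the value of $f^J$ at a pure strategy $u \in U$ as
\[
f^J(x,\sigma,x',\sigma')(u) = \Phi(u)\,\sigma(u), \qquad \Phi(u) \assign A(u) - B,
\]
where $A(u) \assign \int_U J(x,u,x',u')\,\sigma'(u')\,\diff\eta(u')$ is the expected payoff of strategy $u$ and $B \assign \int_U A(v)\,\sigma(v)\,\diff\eta(v)$ is the scalar average payoff. The crucial elementary observation is that, since $\sigma,\sigma' \in \sigmaSet$ are probability densities (i.e.\ $\int_U \sigma\,\diff\eta = \int_U \sigma'\,\diff\eta = 1$), both $A$ and $B$ are bounded in absolute value by $\|J\|_\infty$, and hence $\|\Phi\|_\infty \leq 2\|J\|_\infty$. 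This uniform boundedness is exactly what upgrades a locally Lipschitz product into a globally Lipschitz one.

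First I would reduce the difference of two products to two pieces by the identity $\Phi_2\sigma_2 - \Phi_1\sigma_1 = \Phi_2(\sigma_2 - \sigma_1) + (\Phi_2 - \Phi_1)\sigma_1$, where the subscripts index the two evaluation points. Taking $L^p_\eta(U)$-norms and using $\|\Phi_2\|_\infty \leq 2\|J\|_\infty$ together with the pointwise bound $\sigma_1 \leq b$ from the definition of $\sigmaSet$ gives
\[
\|f^J(x_2,\sigma_2,x_2',\sigma_2') - f^J(x_1,\sigma_1,x_1',\sigma_1')\|_{L^p_\eta(U)} \leq 2\|J\|_\infty\,\|\sigma_2 - \sigma_1\|_{L^p_\eta(U)} + b\,\|\Phi_2 - \Phi_1\|_{L^p_\eta(U)},
\]
so it remains to estimate $\|\Phi_2 - \Phi_1\|_{L^p_\eta(U)} \leq \|A_2 - A_1\|_{L^p_\eta(U)} + |B_2 - B_1|$; here the scalar $B$ contributes a constant function whose $L^p_\eta$-norm equals $|B_2 - B_1|$ because $\eta$ is a probability measure.

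The Lipschitz estimate for $A$ is the heart of the calculation. Writing $A_2(u) - A_1(u)$ and inserting $\pm\,J(x_1,u,x_1',u')\,\sigma_2'(u')$ (a telescoping in the payoff and then in the strategy) splits it into a term controlled by the Lipschitz continuity of $J$ in its first and third arguments, which yields $\Lip(J)\,(\|x_2 - x_1\| + \|x_2' - x_1'\|)$ after integrating against the probability density $\sigma_2'$, and a term controlled by $\|J\|_\infty\,\|\sigma_2' - \sigma_1'\|_{L^1_\eta(U)}$. Since $\eta$ is a probability measure, $\|\cdot\|_{L^1_\eta} \leq \|\cdot\|_{L^p_\eta}$ by Jensen's inequality, and the resulting bound is \emph{uniform} in $u$, so it also bounds $\|A_2 - A_1\|_{L^p_\eta(U)}$. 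The estimate for $|B_2 - B_1|$ follows the same pattern after the splitting $B_2 - B_1 = \int_U (A_2 - A_1)\,\sigma_2\,\diff\eta + \int_U A_1\,(\sigma_2 - \sigma_1)\,\diff\eta$, reusing the uniform bound on $A_2 - A_1$ for the first integral and $\|A_1\|_\infty \leq \|J\|_\infty$ for the second. Assembling all four telescoping contributions produces the claim with a constant $L = L(J,b)$ depending only on $\|J\|_\infty$, $\Lip(J)$ and the upper bound $b$.

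The computation is entirely elementary; the only points requiring care are (i) remembering to use that $\sigma,\sigma'$ are normalized, so that $A$ and $B$ are uniformly bounded by $\|J\|_\infty$, and (ii) consistently downgrading the $L^1_\eta$-norms that arise naturally from integrating differences of densities to $L^p_\eta$-norms via the probability-measure property of $\eta$. I do not expect a genuine obstacle beyond the bookkeeping of the four telescoping terms.
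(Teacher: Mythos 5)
Your proof is correct and takes essentially the same approach as the paper's: your $A$ is exactly the paper's single-interaction payoff $j(x,x',\sigma',u)$, and both arguments rest on the same ingredients, namely the uniform bound $|j|\le\|J\|_\infty$ coming from normalization of the densities, the pointwise bound $\sigma\le b$, and Jensen's inequality to pass from $L^1_\eta$ to $L^p_\eta$. The only difference is bookkeeping: you vary all four arguments at once via the product decomposition $\Phi_2\sigma_2-\Phi_1\sigma_1=\Phi_2(\sigma_2-\sigma_1)+(\Phi_2-\Phi_1)\sigma_1$, whereas the paper telescopes one argument at a time, and both routes yield a constant $L=L(J,b)$ of the same form.
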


\begin{proof}
	Define the payoff for the single interaction
	$j \colon \R^d \times \R^d \times \sigmaSet \times U \to \R$ as
	\[
	j(x,x',\sigma',u) \assign \int_U J(x,u,x',u')\sigma'(u') \,d\eta(u')
	\]
	and observe that, for any $x,x' \in \R^d$, $u \in U$ and $\sigma_1',\sigma_2' \in \sigmaSet$, we have
	\begin{equation}\label{eq:jlip}
	\begin{aligned}
	|j(x,x',\sigma'_2,u)-j(x,x',\sigma'_1,u)| &= \left| \int_U J(x,u,x',u')(\sigma'_2(u')-\sigma'_1(u'))\,d\eta(u') \right|\\
	& \leq \|J\|_\infty \cdot \|\sigma'_2-\sigma'_1\|_{L^p_\eta(U)}.
	\end{aligned}
	\end{equation}
	The function $f^J$ introduced in \eqref{eq:fJ} can then be written as
	\[
	\begin{aligned}
	&f^J(x,\sigma,x',\sigma') = \left( j(x,x',\sigma',\cdot) - \int_U j(x,x',\sigma',v)\sigma(v)\,d\eta(v) \right) \sigma.
	\end{aligned}
	\]
	The Lipschitz continuity in $x$ and $x'$ is clear: it follows by Lipschitz continuity of $j$ in $x,x'$, descending directly from Lipschitz continuity of $J$. It remains to investigate the dependence on $\sigma, \sigma'$. For $\sigma_1,\sigma_2,\sigma' \in \sigmaSet$, direct estimates provide
	\[
	\begin{aligned}
	& \|f^J(x,\sigma_2,x',\sigma') - f^J(x,\sigma_1,x',\sigma')\|_{L^p_\eta(U)}
	= \Bigg\| j(x,x',\sigma',\cdot)\,(\sigma_2(\cdot)-\sigma_1(\cdot)) \\
	& \qquad - \sigma_2(\cdot )\int_U j(x,x',\sigma',v)\sigma_2(v)\,d\eta(v)
	+ \sigma_1(\cdot) \int_U j(x,x',\sigma',v)\sigma_1(v)\,d\eta(v) \Bigg\|_{L^p_\eta(U)} \\
	& \leq \left\| |j(x,x',\sigma',\cdot)|\cdot |\sigma_2(\cdot)-\sigma_1(\cdot)| \right\|_{L^p_\eta(U)}
	+  \left\|  \sigma_1(\cdot)\cdot\int_U |j(x,x',\sigma',v)|\cdot|\sigma_2(v)-\sigma_1(v)|\,d\eta(v)
	\right\|_{L^p_\eta(U)} \\
	& \qquad + \left\|  |\sigma_2(\cdot)-\sigma_1(\cdot)| \cdot
	\int_U |j(x,x',\sigma',v)|\cdot \sigma_2(v) \,d\eta(v) \right\|_{L^p_\eta(U)} \\
	&\leq (\|J\|_\infty + b\cdot \|J\|_\infty + \|J\|_\infty) \cdot \|\sigma_2-\sigma_1\|_{L^p_\eta(U)}
	\end{aligned}
	\]
	where we used $j(\cdot,\cdot,\cdot,\cdot) \leq \|J\|_\infty$, $\sigma(\cdot) \leq b$ and Jensen's inequality.
	Similarly, for $\sigma,\sigma_1',\sigma_2' \in \sigmaSet$, taking into account \eqref{eq:jlip}, we have
	\[
	\begin{aligned}
	&\|f^J(x,\sigma,x',\sigma'_2) - f^J(x,\sigma,x',\sigma'_1)\|_{L^p_\eta(U)} \\
	= {} & \Bigg\| \Bigg[
	\left[ j(x,x',\sigma'_2,\cdot) - j(x,x',\sigma'_1,\cdot)\right]
	- \int_U\left[ j(x,x',\sigma'_2,v) - j(x,x',\sigma'_1,v)\right] \sigma(v)\,d\eta(v)
	\Bigg]\,\sigma(\cdot) \Bigg\|_{L^p_\eta(U)} \\
	\leq {} & (b \cdot \|J\|_\infty + b^2 \cdot \|J\|_\infty) \cdot \|\sigma'_2-\sigma'_1\|_{L^p_\eta(U)}
	\end{aligned}
	\]
	which concludes the proof.
\end{proof}

\begin{remark}[Entropic $f^\veps$ is not BL-Lipschitz continuous]
	In \cite{AmFoMoSa2018}, for the study of the original un-regularized system \eqref{eq:GameModel}, the authors work in the space $Y = \R^d \times F(U)$, where
	\[
	F(U) \assign \overline{\textup{span}(\prob(U))}^{\|\cdot\|_{BL}}
	\]
	with the bounded Lipschitz norm defined as
	\begin{equation}\label{eq:BLnorm}
	\|\sigma\|_{BL} \assign \sup \left\{ \int_U \varphi(u) \diff\sigma(u) \mid \varphi \in C(U), \|\varphi\|_\infty + \Lip(\varphi) \leq 1  \right\}.
	\end{equation}
	Large parts of the analysis in \cite{AmFoMoSa2018} are based on this bounded Lipschitz norm. However, the function $f^\veps$ is not Lipschitz continuous with respect to this norm. For $0 < a \ll 1 \ll b$, let $U=[0,1]$ and $\eta$ the uniform Lebesgue measure over $U$. For $n \geq 2$ we split $[0,1]$ into $2^n$ intervals and define the nodes $x_i^n = i/2^n$, for $0 \leq i \leq 2^n$. Define $s^n \colon (0,1) \to (0,2)$ as
	\[
	s^n(x) = \begin{cases}
	2-a & \tn{if } x \in (x_i^n, x_{i+1}^n), i\text{ even} \\
	a   & \tn{if } x \in (x_i^n, x_{i+1}^n), i\text{ odd}.
	\end{cases} 
	\]
	This function alternates between $2-a$ and $a$ over each sub-interval. Set now
	\[
	\sigma_1^n(x) = \begin{cases}
	1 & \tn{if } x \in (0, 1/2) \\
	s^n(x)   & \tn{if } x \in (1/2,1)
	\end{cases} 
	\quad \text{and} \quad
	\sigma_2^n(x) = \begin{cases}
	s^n(x) & \tn{if } x \in (0, 1/2) \\
	1   & \tn{if } x \in (1/2,1)
	\end{cases} 
	\]
	On the one hand, we find that $\|\sigma_2^n - \sigma_1^n\|_{BL} \leq W_1(\sigma_2^n, \sigma_1^n) \to 0$ as $n \to \infty$. On the other hand, observe that
	\[
	\int_0^1 \sigma_1^n(x) \log(\sigma_1^n(x))\,dx = \int_0^1 \sigma_2^n(x) \log(\sigma_2^n(x))\,dx = \frac{1}{4}(a\log(a)+(2-a)\log(2-a)) =: M
	\]
	so that, for $\varphi(x) = x/2$ (which is admissible in \eqref{eq:BLnorm}), one has
	\begin{align*}
	&\tfrac{1}{\veps}\|f^\veps(\sigma_2^n) - f^\veps(\sigma_1^n)\|_{BL}
	\geq -\int_0^{1/2} \tfrac{x}{2} \cdot s^n(x)\log(s^n(x))\,\diff x
	+ \int_{1/2}^1 \tfrac{x}{2} \cdot s^n(x)\log(s^n(x))\,\diff x \\
	& + M \underbrace{\int_0^1 \tfrac{x}{2} \cdot(\sigma_2^n(x)-\sigma_1^n(x))\,\diff x}_{\to 0 \text{ as } n \to \infty}
	= \frac{1}{4}\int_0^{1/2} s^n(x)\log(s^n(x))\,\diff x + o(1) = \frac{M}{4} + o(1).
	\end{align*}
	Hence, $f^\veps$ is not Lipschitz continuous with respect to the bounded Lipschitz norm.
\end{remark}

\medskip
We are left with the proof of the compatibility Lemma \ref{lem:Compat}, which we briefly restate for the reader's convenience.
\begin{lemma}[Compatibility condition]
	\label{lem:CompatII}
	For $J \in \fullX$ and $\varepsilon > 0$, let $f^J$ and $f^\varepsilon$ be defined as in \eqref{eq:fJ} and \eqref{eq:fH}. Then, there exist $a,b$ with $0 < a < 1 < b < \infty$ such that for any $(x,\sigma),(x',\sigma') \in \R^d \times \sigmaSet$ there exists some $\theta>0$ such that
	\begin{align}
	\label{eq:CompatII}
	\sigma + \theta \lambda \left[f^J\big(x,\sigma,x',\sigma'\big) +f^\veps\big(\sigma\big) \right] \in \sigmaSet.
	\end{align}
\end{lemma}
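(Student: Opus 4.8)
We need to show that for appropriate bounds $0 < a < 1 < b < \infty$, starting from any $\sigma \in \sigmaSet$, moving an infinitesimal step in the direction $f^J(x,\sigma,x',\sigma') + f^\veps(\sigma)$ keeps us inside $\sigmaSet$. The constraint set $\sigmaSet$ requires the density to stay in $[a,b]$ pointwise and to integrate to $1$. Let me think about what the direction field looks like.

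Both $f^J$ and $f^\veps$ have the form $[\text{something}] \cdot \sigma$, where the bracket is $[g(\cdot) - \int g \,\sigma\,d\eta]$ — a "centered" quantity. For $f^\veps$, the bracket is $\veps[-\log\sigma(\cdot) + \int \log(\sigma)\sigma\,d\eta]$ times $\sigma$. The crucial observation is that $\int_U (\text{bracket}) \cdot \sigma\,d\eta = 0$ automatically, because the bracket is $g - \langle g\rangle_\sigma$ and integrating against $\sigma$ gives $\langle g\rangle_\sigma - \langle g\rangle_\sigma = 0$. So the mass-preservation constraint $\int \sigma\,d\eta = 1$ is preserved to first order for any $\theta$ — actually exactly, since the update is linear in $\theta$ and the increment integrates to zero.

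**Strategy.** So the real work is the pointwise bound $a \le \sigma(u) + \theta\lambda[\cdots](u) \le b$. My plan is to analyze the sign of the direction field near the boundary values $a$ and $b$. The key is the entropic term. Where $\sigma(u)$ is close to the upper bound $b$, the factor $-\log\sigma(u)$ in $f^\veps$ becomes very negative (if $b$ is large), pushing $\sigma$ downward; where $\sigma(u)$ is close to the lower bound $a$, the factor $-\log\sigma(u)$ becomes large and positive (if $a$ is small), pushing $\sigma$ upward. Meanwhile $f^J$ is bounded: since $J \in \fullX$ is bounded, the bracket in $f^J$ is bounded by $2\|J\|_\infty$, and multiplied by $\sigma \le b$ it contributes a bounded push. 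So the entropic restoring force dominates the bounded $f^J$ force near the boundary, provided $a$ is small enough and $b$ large enough.

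**Key steps in order.** First I would verify the mass-preservation identity, confirming the integral of the increment is zero, so $\int \sigma\,d\eta = 1$ holds for the updated density regardless of $\theta$. Second, I would write out the full direction $F(\sigma)(u) := [f^J + f^\veps](\sigma)(u)$ and factor out $\sigma(u)$, writing $F(\sigma)(u) = \sigma(u)\cdot P(u)$ where $P(u)$ collects the two centered brackets. The updated density is $\sigma(u)(1 + \theta\lambda P(u))$, so for the pointwise bounds it suffices that $1 + \theta\lambda P(u)$ keeps $\sigma(u)(1+\theta\lambda P(u))$ within $[a,b]$. Third, I would bound $P(u)$: split into the $f^J$ part (bounded by $2\|J\|_\infty$ uniformly) and the entropic part $\veps[-\log\sigma(u) + H(\sigma)]$, where $H(\sigma) = \int \log(\sigma)\sigma\,d\eta$ is bounded by $\log b$ in absolute value over $\sigmaSet$. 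Fourth — the heart of the argument — I would examine behavior at the boundary: choose $a$ small and $b$ large so that when $\sigma(u) = b$, the bracket $P(u)$ is strictly negative (so the step decreases $\sigma(u)$), and when $\sigma(u) = a$, $P(u)$ is strictly positive (so the step increases $\sigma(u)$). Concretely, at $\sigma(u)=b$ the entropic contribution is $\veps[-\log b + H]$; since $H \le \log b$ this is $\le 0$, but I need strict dominance over the $+2\|J\|_\infty$ from $f^J$, which forces $\veps\log(b/a') > 2\|J\|_\infty$ for a suitable comparison, i.e. $b$ large enough. A symmetric computation at $\sigma(u)=a$ with $a$ small enough. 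Finally, I would invoke compactness/continuity: $P$ is continuous in $u$ and the density is bounded, so there is a uniform $\theta > 0$ making the step admissible; since $P$ points inward strictly at both boundary faces and is bounded everywhere, a single small $\theta>0$ works for all $u$ simultaneously.

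**Main obstacle.** The main subtlety is making the boundary sign analysis rigorous and \emph{uniform} in $u$ and in the configuration $(x,\sigma,x',\sigma')$. The entropic force depends on $\sigma(u)$ pointwise, so near the upper boundary the push is inward, but I must ensure the choice of $a,b$ is made \emph{before} fixing any particular $\sigma$ (the bounds must be universal, depending only on $\|J\|_\infty$ and $\veps$). The delicate point is that $H(\sigma)$ itself varies with $\sigma$, so I need the inequality $\veps(\log b - H(\sigma)) > 2\|J\|_\infty$ to hold whenever some coordinate sits at the upper boundary; using $H(\sigma) \le \log b$ is too weak there. Instead I would bound $H(\sigma) \le \log b$ and note that the relevant comparison is between $-\log b$ and the \emph{average} $H$, where the gap $\log b - H$ can be made large only by pushing $b$ up relative to the typical density mass. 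The cleanest route is to observe $H(\sigma) \le 0$ is false in general, but $H(\sigma) \ge \log a$ always, so at the lower face $-\log a + H \ge -\log a + \log a = 0$, and strictness plus the $f^J$ bound forces $-\log a > \log(1/a)$ large, i.e. $a$ small; symmetrically at the upper face I use $H \le \log b$ giving $-\log b + H \le 0$, and I need strictness beating $2\|J\|_\infty/\veps$, which requires separating the value $b$ from the bulk where $\sigma$ is forced to sit by the mass constraint. Handling this separation carefully — ensuring the chosen $a,b$ genuinely create an inward-pointing field on the whole boundary uniformly over all admissible configurations — is where the real care is needed.
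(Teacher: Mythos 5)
Your overall strategy coincides structurally with the paper's: the mass constraint is preserved automatically because the increment is a centered bracket times $\sigma$; the pointwise constraint is handled by factoring the update as $\sigma(u)\,(1+\theta\lambda P(u))$, bounding the $f^J$ bracket by $2\|J\|_\infty$, and letting the entropic term dominate near the faces $\{\sigma=a\}$ and $\{\sigma=b\}$. However, there is a genuine gap exactly at the point you yourself flag as ``where the real care is needed'': you never produce an upper bound on $H(\sigma)=\int_U \sigma\log\sigma\,\diff\eta$ that is strictly smaller than $\log b$ by a margin beating $2\|J\|_\infty/\veps$, and without such a bound the upper-face analysis does not close. Moreover, your lower-face analysis as literally written also fails: you invoke $H(\sigma)\ge\log a$, which only gives an entropic push $\ge \veps(-\log a + \log a)=0$ at $\sigma(u)=a$ --- not enough to beat $2\|J\|_\infty$. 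The fix there is cheap and is what the paper uses: Jensen's inequality together with $\int_U\sigma\,\diff\eta=1$ gives $H(\sigma)\ge 0$, so the push at the lower face is at least $\veps\log(1/a)$, which exceeds $2\|J\|_\infty$ once $a<\exp(-2\|J\|_\infty/\veps)$.

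The missing idea for the upper face is a convexity argument that exploits the mass constraint to quantify how far the ``bulk'' of $\sigma$ must sit below $b$. In the paper: write each admissible density pointwise as $\sigma(u)=\xi(u)a+(1-\xi(u))b$ with $\xi(u)\in[0,1]$; convexity of $s\mapsto s\log s$ gives $\sigma\log\sigma\le \xi\, a\log a+(1-\xi)\, b\log b$, and the constraint $\int_U\sigma\,\diff\eta=1$ determines $\int_U\xi\,\diff\eta=(b-1)/(b-a)$ exactly. Integrating yields the uniform bound
\begin{align*}
H(\sigma) \;\le\; K_{a,b} \assign \frac{a(b-1)}{b-a}\log a + \frac{b(1-a)}{b-a}\log b ,
\end{align*}
which is strictly below $\log b$ with a usable margin: for fixed $a$ and large $b$ one has $K_{a,b}\approx a\log a+(1-a)\log b$, so $\log b - K_{a,b}\approx a\log(b/a)\to\infty$ as $b\to\infty$. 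Hence $b$ can be chosen with $\veps(\log b - K_{a,b})>2\|J\|_\infty$ (this is the ``one-dimensional asymptotic analysis'' in the paper), and the field then points strictly inward at the upper face uniformly over all configurations. One further repair: a sign condition exactly at the faces is not by itself sufficient for a uniform $\theta$, since a density sitting just below $b$ with an outward-pointing field could still be pushed past $b$ by a fixed step. The paper resolves this with a three-region argument --- below $\exp(-2\|J\|_\infty/\veps)$, middle, above $\exp(2\|J\|_\infty/\veps+K_{a,b})$ --- where in each outer region the relevant one-sided bound holds for \emph{every} $\theta>0$, and in the middle region the density is separated from both faces by fixed gaps, so an explicit uniform $\theta$ suffices; your appeal to ``compactness/continuity'' should be replaced by this explicit buffer-region structure.
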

\begin{proof}
	Fix any $(x,\sigma), (x',\sigma') \in (\R^d \times \sigmaSet)^2$. Taking into account that $\|\sigma\|_{L^1_\eta(U)} = 1$, one obtains
	\[
	\int_U \left[ \sigma + \theta\lambda (f^J(x,\sigma,x',\sigma') + f^\veps(\sigma) ) \right] \,\diff\eta = 1
	\]
	for any $\theta>0$. We are left with proving that, for suitable $0 < a \ll 1 \ll b$, there exists $\theta > 0$ such that $a \leq \sigma + \theta\lambda (f^J(x,\sigma,x',\sigma') + f^\veps(\sigma) ) \leq b$. On the one hand, thanks to global boundedness of $J$, one has
	\[
	-2\|J\|_\infty \leq f^J(x,u,x',\sigma') \leq 2\|J\|_\infty
	\]
	On the other hand, writing any $\sigma \in \sigmaSet$ as $\sigma(u) = \xi(u) a + (1-\xi(u))b$, with $0 \leq \xi \leq 1$, and using convexity of $x \mapsto x\log x$, one has
	\[
	\int_U \sigma(v)\log(\sigma(v))\,d\eta(v) \leq \int_U \xi(u)a\log a\,d\eta(u) + \int_U (1-\xi(u))b\log b \,d\eta(u).
	\]
	The integral of $\xi$ can be explicitly computed (recall $\|\sigma\|_{L^1_\eta(U)} = 1$), so that
	\begin{equation}\label{eq:mainIntLog}
	0 \leq \int_U \sigma(v)\log(\sigma(v))\,d\eta(v) \leq  \frac{a(b-1)}{b-a}\log(a) + \frac{b(1-a)}{b-a}\log(b) =: K_{a,b}
	\end{equation}
	where non-negativity follows by Jensen's inequality. The proof boils then down to find suitable $a,b$ such that for any given $\sigma \in \sigmaSet$ there exists $\theta>0$ such that
	\begin{align*}
	a \leq\, &\sigma(u) (1 - \theta\lambda\left[ 2\|J\|_\infty + \varepsilon\log(\sigma(u)) \right] ) \tag{L}\label{eq:L} \\
	&\sigma(u) \left(1 + \theta\lambda \left[ 2\|J\|_\infty + \varepsilon(K_{a,b} -\log(\sigma(u)) \right] \right) \,\leq b\tag{U}\label{eq:U}
	\end{align*}
	for every $u \in U$. Select $a,b$ in the following way:
	\begin{equation}\label{eq:AandB}
	0 < a < \exp\left(-\frac{2\|J\|_\infty}{\varepsilon}\right) = L^\ell_\varepsilon < L^u_\varepsilon = \exp\left(\frac{2\|J\|_\infty}{\varepsilon} + K_{a,b}\right) < b
	\end{equation}
	The existence of $b$ satisfying this requirement follows by a direct one-dimensional asymptotic analysis.
	We distinguish three regions:
	\begin{itemize}
		\item $a \leq \sigma(u) < L_\varepsilon^\ell$: the lower bound \eqref{eq:L} is satisfied for any $\theta>0$ because
		$
		\left[2\|J\|_\infty + \varepsilon\log(\sigma(u))\right]\allowbreak < 0
		$
		in the given region, while the upper bound \eqref{eq:U} follows taking
		\[
		\theta \leq \frac{b-L_\varepsilon^\ell}{\lambda L_\varepsilon^\ell(2\|J\|_\infty + \varepsilon\log (b/a))}
		\]
		\item $L_\varepsilon^\ell \leq \sigma(u) \leq L_\varepsilon^u$: a direct estimate on \eqref{eq:L} and \eqref{eq:U} leads to
		\[
		\theta \leq \min\left( \frac{L_\varepsilon^\ell-a}{\lambda L_\varepsilon^\ell(2\|J\|_\infty + \varepsilon\log b)},  \frac{b-L_\varepsilon^u}{\lambda L_\varepsilon^u(2\|J\|_\infty + \varepsilon\log (b/a))} \right)
		\]
		\item $L_\varepsilon^u < \sigma(u) \leq b$: the lower bound \eqref{eq:L} is satisfied for
		\[
		\theta \leq \frac{L_\varepsilon^u-a}{\lambda L_\varepsilon^u(2\|J\|_\infty + \varepsilon\log b)}
		\]
		while upper bound \eqref{eq:U} is satisfied for any $\theta > 0$ because $\left[ 2\|J\|_\infty + \varepsilon(K_{a,b} -\log(\sigma(u)) \right] < 0$ in the given region.
	\end{itemize}
	The sought-after $\theta$ can then be selected to be the smallest between the previous upper bounds.
\end{proof}

\subsection{Convergence to fast-reaction limit in undisclosed setting}
\label{sec:NonTelepathicFastReactionProofs}

In this Section we prove Theorem \ref{thm:LambdaQuasiStaticConvergence}, which is concerned with the convergence to the undisclosed fast-reaction limit as $\lambda \to \infty$.

\begin{proof}[Proof of Theorem \ref{thm:LambdaQuasiStaticConvergence}]\hfill\\
	\emph{Part 1: $\sigma_i(t)$ close to $\sigma^J_i(x(t))$.}
	For $i \in \{1,\ldots,N\}$ the dynamics of $\sigma_i$ are described by
	\begin{align}
	\label{eq:sigmaDotExpl}
	\partial_t \sigma_i(t)(u) & = \lambda \cdot \Big(-g'_{i,u}(t,\sigma_i(t)(u))+\underbrace{\int_U g'_{i,u}(\sigma_i(t)(u))\,\sigma_i(t)(u)\,\diff \eta(u)}_{\assign m_i(t)} \Big) \cdot \sigma_i(t)(u) \\
	\tn{where} \quad g_{i,u}(t,s) & \assign -\frac{1}{N}\sum_{j=1}^N J(x_i(t),u,x_j(t))\,s
	+ \veps\,\left[s\,\log(s)-s+1\right] \\
	\intertext{and $g_{i,u}'$ denotes the first derivative with respect to $s$. This is formally the gradient flow (scaled by $\lambda$) of the (explicitly time-dependent) energy}
	G_i(t,\sigma) & \assign \int_U g_{i,u}(t,\sigma(u))\,\diff \eta(u)
	\end{align}
	with respect to the (spherical) Hellinger--Kakutani distance over $\sigmaSet$. (This relation only serves as intuition for our proof and is never used in a mathematical argument.)
	
	Let $a, b$ such that $0 < a < b < \infty$. Then there exists a $c \in (0,\infty)$ such that $g_{i,u}''(t,s) \geq c$ for $t \in [0,\infty)$, $s \in [a,b]$, $u \in U$ and $i \in \{1,\ldots,N\}$ (where analogous to above $g_{i,i}''(t,s)$ refers to the second derivative with respect to $s$).
	This implies that $G_i(t,\cdot)$ is strongly convex on $\sigmaSet$. 
	
	Consider now the fast-reaction mixed strategies $\bm{\sigma}^\ast=(\sigma^\ast_i)_{i=1}^N$ associated with locations $\mathbf{x}=(x_i)_{i=1}^N$, \eqref{eq:QuasiStaticGameModelSigma}, (i.e.~$\sigma_i^\ast(t) \assign \sigma^J_i(\mathbf{x}(t))$) where we introduce a normalization constant $\mc{A}_i$:
	\begin{align}
	\sigma_i^\ast(t)(u) & = \mc{A}_i(t) \cdot
	\exp\left(\frac{1}{N\,\veps} \sum_{j=1}^N J(x_i(t),u,x_j(t)) \right), \\
	\mc{A}_i(t)^{-1} & = \int_U \exp\left(\frac{1}{N\,\veps} \sum_{j=1}^N J(x_i(t),u,x_j(t)) \right) \diff \eta(u).
	\end{align}		
	For these we find
	\begin{align}
	g_{i,u}'(t,\sigma_i^\ast(t)(u)) & = -\frac{1}{N} \sum_{j=1}^N J(x_i(t),u,x_j(t)) + \veps\,\log(\sigma_i^\ast(t)(u))
	= \veps\,\log(\mc{A}_i(t)).
	\end{align}
	Using convexity of the functions $g_{i,u}(t,\cdot)$ we find for arbitrary $\sigma \in \sigmaSet$:
	\begin{align}
	G_i(t,\sigma) & \geq
	\int_U \big[ g_{i,u}(t,\sigma_i^\ast(t)(u)) + \underbrace{g_{i,u}'(t,\sigma_i^\ast(t)(u))}_{=\veps\,\log(\mc{A}_i(t))} \cdot 
	(\sigma(u)-\sigma_i^\ast(t)(u)) \big] \,\diff \eta(u)
	= G_i(t,\sigma_i^\ast(t))
	\label{eq:QuasiStaticConvergenceG}
	\end{align}
	where the second term integrates to zero, since $\veps\,\log(\mc{A}_i(t))$ is constant with respect to $u \in U$ and both $\sigma$ and $\sigma_i^\ast(t)$ are normalized. Therefore, $\sigma_i^\ast(t)$ is a minimizer of $G_i(t,\cdot)$ over $\sigmaSet$, and since $G_i(t,\cdot)$ is strongly convex, it is the unique minimizer.
	
	Using the bound $c \leq g_{i,u}''(t,s)$ for $s \in [a,b]$ we find for $s^\ast \in [a,b]$
	\begin{align}
	g_{i,u}'(t,s^\ast) \cdot (s-s^\ast) + \tfrac{c}{2} (s-s^\ast)^2
	\leq g_{i,u}(t,s)-g_{i,u}(t,s^\ast)
	\end{align}
	and applying this bound to $G_i(t,\cdot)$, using $\sigma_i^\ast(t) \in \sigmaSet$ due to \eqref{eq:AandB}, analogous to \eqref{eq:QuasiStaticConvergenceG} one obtains for any $\sigma \in \sigmaSet$ that
	\begin{align}
	\label{eq:QuasiStaticConvergenceGInterval}
	\frac{c}{2} \|\sigma-\sigma_i^\ast(t)\|^2_{L^2_\eta(U)}
	\leq
	G_i(t,\sigma) - G_i(t,\sigma_i^\ast(t)).
	\end{align}
	Strong convexity of $G_i(t,\cdot)$ provides another useful estimate (where $m_i$ is defined in \eqref{eq:sigmaDotExpl}):
	\begin{multline*}
	\|g_{i,u}'(t,\sigma_i(t))-m_i(t)\|_{L^2_\eta(U)} \cdot \|\sigma_i(t)-\sigma_i^\ast(t)\|_{L^2_\eta(U)} \\
	\geq 
	\int_U \big[g'_{i,u}(t,\sigma_i(t)(u))-m_i(t)\big] \cdot \big[\sigma_i(t)(u)-\sigma_i^\ast(t)(u)\big] \,\diff \eta(u) \\
	= \int_U g'_{i,u}(t,\sigma_i(t)(u)) \cdot \big[\sigma_i(t)(u)-\sigma_i^\ast(t)(u)\big] \,\diff \eta(u)
	\geq G_i(t,\sigma_i(t))-G_i(t,\sigma_i^\ast(t))
	\end{multline*}
	where we used the Cauchy-Schwarz inequality.
	With \eqref{eq:QuasiStaticConvergenceGInterval} this implies
	\begin{align}
	\label{eq:QuasiStaticConvergenceGGradient}
	\|g_{i,u}'(t,\sigma_i(t))-m_i(t)\|_{L^2_\eta(U)}^2
	\geq \frac{c}{2} [G_i(t,\sigma_i(t))-G_i(t,\sigma_i^\ast(t))].
	\end{align}
	
	We need two additional estimates to control the explicit time-dependency of $G_i$. Note that $\|\partial_t x_i(t)\| \leq \|e\|_{\infty}$ and thus $|\partial_t g_{i,u}(t,s)| \leq 2 \cdot \Lip(J) \cdot \|e\|_\infty$. ($J$ may not be differentiable, but since $t \mapsto x_i(t)$ and $J$ are Lipschitz, so is $g_{i,u}$ and thus this derivative exists for almost all $t$.)
	This implies $|\partial_t G_i(t,\sigma)| \leq 2 \cdot \Lip(J) \cdot \|e\|_\infty$. Next, an explicit computation yields
	\begin{align*}
	G_i(t,\sigma_i^\ast(t)) & = \veps \cdot \log \mc{A}_i(t)
	= -\veps \log \left(\int_U \exp\left(\frac{1}{N\,\veps} \sum_{j=1}^N J(x_i(t),u,x_j(t)) \right) \diff \eta(u) \right)
	\end{align*}
	from which we deduce that $|\tfrac{\diff}{\diff t} G_i(t,\sigma_i^\ast(t))| \leq 2 \cdot \Lip(J) \cdot \|e\|_\infty$.
	
	Summarizing now, we find for almost every $t \in [0,\infty)$:
	\begin{align}
	\frac{\diff}{\diff t} \left[ G_i(t,\sigma_i(t)) - G_i(t,\sigma_i^\ast(t)) \right] &
	= \int_U g_{i,u}'(t,\sigma_i(t)(u)) \cdot \partial_t \sigma_i(t)(u)\,\diff \eta(u) \nonumber \\
	& \qquad + (\partial_t G_i)(t,\sigma_i(t)) - \tfrac{\diff}{\diff t} G_i(t,\sigma_i^\ast(t)) \nonumber \\
	& = \int_U [g_{i,u}'(t,\sigma_i(t)(u))-m_i(t)] \cdot \partial_t \sigma_i(t)(u)\,\diff \eta(u) \nonumber \\
	& \qquad + (\partial_t G_i)(t,\sigma_i(t)) - \tfrac{\diff}{\diff t} G_i(t,\sigma_i^\ast(t)) \nonumber \\
	& = -\lambda \int_U [g_{i,u}'(t,\sigma_i(t)(u))-m_i(t)]^2 \cdot \underbrace{\sigma_i(t)(u)}_{\geq a}
	\,\diff \eta(u) \nonumber \\
	& \qquad + (\partial_t G_i)(t,\sigma_i(t)) - \tfrac{\diff}{\diff t} G_i(t,\sigma_i^\ast(t)) \nonumber \\
	& \leq -\frac{\lambda\,a\,c}{2} \left[G_i(t,\sigma_i(t))-G_i(t,\sigma_i^\ast(t))\right]
	+4 \cdot \Lip(J) \cdot \|e\|_\infty.
	\end{align}
	Applying Gr\"onwall's lemma to $h(t) \assign G_i(t,\sigma_i(t)) - G_i(t,\sigma_i^\ast(t)) - \tfrac{8\,\Lip(J)\,\|e\|_\infty}{\lambda\,a\,c}$ yields
	\begin{multline}
	G_i(t,\sigma_i(t)) - G_i(t,\sigma_i^\ast(t)) \leq \tfrac{8\,\Lip(J)\,\|e\|_\infty}{\lambda\,a\,c} \\
	+ \left[ \left(G_i(0,\sigma_i(0)) - G_i(0,\sigma_i^\ast(0))\right)- \tfrac{8\,\Lip(J)\,\|e\|_\infty}{\lambda\,a\,c}\right] \cdot \exp\left(-\tfrac{\lambda\,a\,c}{2}t\right).
	\end{multline}
	Using $\sigma_i(0) \in \sigmaSet$ and $x_i(0) \in \dball{R}$ for all $i \in \{1,\ldots,N\}$ there exists some $C = C(a,b,R,\allowbreak\Lip(J),\allowbreak\|e\|_\infty) < \infty$ (not depending on $N$ or $i$) such that
	\begin{align}
	G_i(t,\sigma_i(t)) - G_i(t,\sigma_i^\ast(t)) \leq C \left[\tfrac{1}{\lambda}+ \exp\left(-\tfrac{\lambda\,t}{C}\right)\right].
	\end{align}
	By virtue of \eqref{eq:QuasiStaticConvergenceGInterval} an analogous bound with a different $C$ (with the same dependency structure) holds for $\|\sigma_i(t)-\sigma_i^\ast(t)\|_{L^2_\eta(U)}$:
	\begin{align}
	\label{eq:QuasiStaticSigmaBound}
	\|\sigma_i(t)-\sigma_i^\ast(t)\|_{L^2_\eta(U)} \leq C \left[\tfrac{1}{\lambda}+ \exp\left(-\tfrac{\lambda\,t}{C}\right)\right]
	\end{align}
	
	\emph{Part 2: $x_i$ and $\sigma_i$ close to $x^{\ast \ast}_i$ and $\sigma^{\ast \ast}_i$.}
	Recall now that the fast-reaction dynamics \eqref{eq:QuasiStaticGameModel} for $\mathbf{x}^{\ast \ast}$ are given by
	\begin{align*}
	\partial_t x_i^{\ast \ast}(t) & = \int_U e(x_i^{\ast \ast}(t),u)\,\sigma_i^{\ast \ast}(t)(u)\,\diff \eta(u) =
	v^J_i(x_1^{\ast \ast}(t),\ldots,x_N^{\ast \ast}(t)) \\
	\intertext{where $\sigma^{\ast \ast}_i(t)=\sigma^J_i(\mathbf{x}^{\ast\ast}(t))$, \eqref{eq:QuasiStaticGameModelSigma}.
		The dynamics for $\mathbf{x}(t)$ follow}
	\partial_t x_i(t) & = \int_U e(x_i(t),u)\,\sigma_i(t)(u)\,\diff \eta(u) =
	v^J_i(x_1(t),\ldots,x_N(t)) \nonumber \\
	& \qquad + \int_U e(x_i(t),u)\,[\sigma_i(t)(u)-\sigma_i^\ast(t)(u)]\,\diff \eta(u).
	\end{align*}
	Introduce now the deviation measure $\delta(t) \assign \|\mathbf{x}(t)-\mathbf{x}^{\ast \ast}(t)\|_N$ and obtain ($t$-almost everywhere, due to $\delta$ being Lipschitz)
	\begin{align}
	\partial_t \delta(t) & \leq \|\partial_t \mathbf{x}(t)-\partial_t \mathbf{x}^{\ast \ast}(t)\|_N
	\nonumber \\
	& \leq \|\mathbf{v}^J(\mathbf{x}(t))-\mathbf{v}^J(\mathbf{x}^{\ast \ast}(t))\|_N + \sum_{i=1}^N \frac{1}{N} \left\|
	\int_U e(x_i(t),u)\,[\sigma_i(t)(u)-\sigma_i^\ast(t)(u)]\,\diff \eta(u) \right\|
	\nonumber \\
	& \leq \Lip(\mathbf{v}^J) \cdot \underbrace{\|\mathbf{x}(t)-\mathbf{x}^{\ast \ast}(t)\|_N}_{= \delta(t)}
	+ \|e\|_\infty \cdot \sum_{i=1}^N \frac{1}{N} 
	\!\!\!\!\! \!\!\! \!\!\! \underbrace{\|\sigma_i(t)-\sigma_i^\ast(t)\|}_{
		\leq C_{t,\lambda} \assign\left[C \left[\tfrac{1}{\lambda}+
		\exp\left(-\tfrac{\lambda\,t}{C}\right)\right]\right]^{1/2}}
	\label{eq:QuasiStaticDeltaBound}
	\end{align}
	For every $\tau>0$ we get $\delta(t) \leq 2\,\tau\,\|e\|_\infty$ on $t \in [0,\tau]$ (due to $\|\partial_t x_i\|, \|\partial_t x^{\ast \ast}_i\| \leq \|e\|_\infty$) and for $t \in [\tau,\infty)$ from \eqref{eq:QuasiStaticDeltaBound} via Grönwall's lemma (with a change of variables as above)
	\begin{align}
	\delta(t) \leq \left( 2\,\tau\,\|e\|_\infty + \tfrac{C_{\tau,\lambda}}{\Lip(\mathbf{v}^J)} \right)
	\cdot \exp(\Lip(\mathbf{v}^J) \cdot (t-\tau))-\tfrac{C_{\tau,\lambda}}{\Lip(\mathbf{v}^J)}.
	\end{align}
	Setting now $\tau=\sqrt{\lambda}^{-1}$, there is a suitable $C$ (depending on $a$, $b$, $R$, $J$, $e$, but not on $N$ or $i$) such that
	\begin{align}
	\label{eq:QuasiStaticDeltaBoundFinal}
	\delta(t) \leq \frac{C}{\sqrt{\lambda}} \cdot \exp(t \cdot C).
	\end{align}
	From Lemma \ref{lemma:UniformSigma} it follows that (cf.~Lemma \ref{lemma:LipschitzVelocityMap})
	\begin{align}
	\label{eq:QuasiStaticSigmaLip}
	\|\sigma_i^\ast(t)-\sigma_i^{\ast\ast}(t)\|_{L^2_\eta(U)} = \|\sigma_i^J(\mathbf{x}(t))-\sigma_i^{J}(\mathbf{x}^{\ast\ast}(t))\|_{L^2_\eta(U)} \leq L \cdot \|\mathbf{x}(t))-\mathbf{x}^{\ast\ast}(t)\|_N
	\end{align}
	for some $L<\infty$ depending on $J$ and $\veps$.
	Combining \eqref{eq:QuasiStaticSigmaBound}, \eqref{eq:QuasiStaticDeltaBoundFinal} and \eqref{eq:QuasiStaticSigmaLip} one obtains that (for different $C$ with same dependency structure)
	\begin{align}
	\|\sigma_i(t)-\sigma_i^{\ast\ast}(t)\|_{L^2_\eta(U)} \leq \frac{C}{\sqrt{\lambda}} \cdot \exp(t \cdot C) + 
	\left[C \left[\tfrac{1}{\lambda}+ \exp\left(-\tfrac{\lambda\,t}{C}\right)\right]\right]^{1/2}.
	\end{align}
	\emph{Part 3: Mean-field setting.}
	For $\Sigma_1, \Sigma_2 \in \prob(\fullSpace)$ one has
	\begin{align}	
	\label{eq:WassersteinProjection}
	W_1(\proj_\sharp \Sigma_1, \proj_\sharp \Sigma_2) \leq W_1(\Sigma_1,\Sigma_2)
	\end{align}
	due to
	\begin{align*}
	\|x_1-x_2\|=\|P(x_1,\sigma_1)-P(x_2,\sigma_2)\| \leq \|x_1-x_2\| + \|\sigma_1-\sigma_2\|_{L^p_\eta(U)}=\|(x_1,\sigma_1)-(x_2,\sigma_2)\|_Y.
	\end{align*}
	
	Let $(\bar{\Sigma}^N)_{N}$ be a sequence of initial empirical measures with $W_1(\bar{\Sigma}^N,\bar{\Sigma}) \to 0$ as $N \to \infty$ and let $\Sigma^N$ be the corresponding solution of \eqref{eq:ContEqY}, as discussed in Theorem \ref{thm:wellPosedEntropic}, point \ref{item:wellPosedEntropicDiscrete}.
	Set $\bar{\mu}^N \assign \proj_\sharp \bar{\Sigma}^N$, which consequently satisfies $W_1(\bar{\mu}^N,\bar{\mu}) \to 0$ as $N \to \infty$ and let $\mu^N$ be the corresponding solution of \eqref{eq:ContEqR}.
	By Theorem \ref{thm:wellPosedEntropic}, point \ref{item:wellPosedEntropicStability} and Theorem \ref{thm:wellPosedFastReaction}, point \ref{item:wellPosedEntropicStability} (both for $s = 0$), for any $\alpha>0$ there is some $N$ such that
	\begin{align}
	\label{eq:QuasiStaticConvergenceDiscreteApprox}
	W_1(\Sigma^N(t),\Sigma(t)) & \leq \alpha, &
	W_1(\mu^N(t),\mu(t)) & \leq \alpha
	\end{align}
	for $t \in [0,T]$.
	
	The points in $\Sigma^N$ evolve according to the full discrete model, the points in $\mu^N$ evolve according to the fast-reaction limit. Therefore, by virtue of \eqref{eq:QuasiStaticDeltaBoundFinal} one has
	\begin{align}
	\label{eq:QuasiStaticConvergenceDiscreteWasserstein}
	W_1(\proj_\sharp \Sigma^N(t),\mu^N(t)) \leq \frac{C}{\sqrt{\lambda}} \cdot \exp(t \cdot C)
	\end{align}
	with the same $C$ as in \eqref{eq:QuasiStaticDeltaBoundFinal}.
	Combining \eqref{eq:QuasiStaticConvergenceDiscreteWasserstein}, \eqref{eq:QuasiStaticConvergenceDiscreteApprox} and \eqref{eq:WassersteinProjection} one obtains
	\begin{multline*}
	W_1(\proj_\sharp \Sigma(t),\mu(t)) \leq W_1(\proj_\sharp \Sigma(t), \proj_\sharp \Sigma^N(t))
	+ W_1(\proj_\sharp \Sigma^N(t),\mu^N(t)) + W_1(\mu^N(t),\mu(t)) \\
	\leq 2\alpha + \frac{C}{\sqrt{\lambda}} \cdot \exp(t \cdot C).
	\end{multline*}
	The fact that $\alpha>0$ is arbitrary, means the inequality also holds in the limit $\alpha \to 0$ and thus establishes \eqref{eq:QuasiStaticConvergenceMeanField}.
\end{proof}

\subsection{Undisclosed fast-reaction inference functionals}
\label{sec:InferenceProofs}
We report in this section some technical proofs for statements in Section \ref{sec:InferenceNTFR}. We start with proving that observations generated through explicit simulations of the undisclosed fast-reaction system are indeed admissible in the sense of Assumption \ref{ass:Observations}.

\begin{proof}[Proof of Lemma \ref{lemma:ExplicitForward}]
	\hfill\\
	The first statement is just point \ref{item:QuasiStaticDiscrete} in Theorem \ref{thm:wellPosedFastReaction}. The proof of the second statement is a direct by-product of Theorem \ref{thm:wellPosedFastReaction} combined with Lemma \ref{lemma:UniformSigma}. Let $\mu^N$ and $\mu$ be the unique solutions of \eqref{eq:ContEqR} for initial conditions $\bar{\mu}^N$ and $\bar{\mu}$, as provided by Theorem \ref{thm:wellPosedFastReaction}, point \ref{item:wellPosedEntropicContinuous}. By the stability estimate in Theorem \ref{thm:wellPosedFastReaction}, point \ref{item:wellPosedEntropicStability}, one has for any $t \in [0,T]$
	\begin{equation}\label{eq:ConvergenceToMuInf}
	W_1(\mu^N(t), \mu(t)) \leq C \, W_1(\bar{\mu}^N, \bar{\mu}) \to 0 \text{ as } N \to \infty.
	\end{equation}
	Hence, we claim $\mu^\infty = \mu$ and in particular $v^\infty = v^J(\mu)$ and $\sigma^\infty = \sigma^J(\mu)$ (as defined in \eqref{eq:MeanFieldVel} and \eqref{eq:MeanFieldSigma}). Regarding convergence of velocities, observe that by definition
	\[
	v_i^N(t) = v_i^J(x_1^N(t), \dots, x_N^N(t)) = v^J(\mu^N(t))(x_i^N(t))
	\]
	while \eqref{eq:VelocityLip} in Lemma \ref{lemma:UniformSigma} provides the uniform bound
	\[
	\left\| v^{J}(\mu^N(t))(x) - v^{J}(\mu(t))(x) \right\| \leq C\, W_1(\mu^N(t),\mu(t)).
	\]
	This yields directly \eqref{eq:ObservationsVWeak} while \eqref{eq:ObservationsVConvex} follows thanks to the uniform bound $\|v^J(\mu(t))(x)\| \leq \|e\|_\infty$. An analogous argument applies to prove \eqref{eq:ObservationsSigmaWeak} and \eqref{eq:ObservationsSigmaConvex}: first recall that
	\[
	\sigma_i^N(t) = \sigma_i^J(x_1^N(t), \dots, x_N^N(t)) = \sigma^J(\mu^N(t))(x_i^N(t),\cdot)
	\]
	and thanks to \eqref{eq:SigmaLip} in Lemma \ref{lemma:UniformSigma} one has the uniform bounds
	\[
	\left| \sigma^{J}(\mu^N(t))(x,u) - \sigma^{J}(\mu(t))(x,u) \right| \leq C\, W_1(\mu^N(t),\mu(t))
	\]
	and $1/C < \sigma^{J}(\mu^N)(x,u) < C$, which immediately provides the conclusions when combined with uniform continuity of $s \mapsto s\log(s)$ over $[1/C, C]$.
\end{proof}

Next, following the pipeline provided by \cite[Proposition 1.1]{BFHM16}, we show that the obtained minimal inference functional value provides an upper bound on the accuracy of the trajectories that are simulated with the inferred $\hat{J}$.
\begin{proof}[Proof of Theorem \ref{thm:BoundOnTrajectories}]
	\hfill\\
	\emph{Part 1: discrete velocity-error functional \eqref{eq:energyJNxdot}.}
	Let $\mu^N(t) = \frac1N \sum_{i=1}^N \delta_{x_i^N(t)}$. For every $t \in [0,T]$, we estimate
	\begin{align}
	&\|\mathbf{x}^N(t)-\hat{\mathbf{x}}^N(t)\|_N = \left\| \int_0^t (\partial_t \mathbf{x}^N(s) - \partial_t \hat{\mathbf{x}}^N(s))\,\diff s\right\|_N \leq \int_0^t \left\| \partial_t \mathbf{x}^N(s) - \partial_t \hat{\mathbf{x}}^N(s)\right\|_N\,\diff s \nonumber\\
	&= \int_0^t \frac1N \sum_{i=1}^N \left\| v_{i}^N(s) - v_i^{\hat{J}}(\hat{\mathbf{x}}^N(s)) \right\|\,\diff s\nonumber\\
	&\leq \int_0^t \Bigg[ \frac1N \sum_{i=1}^N \left\| v_i^N(s) - v_i^{\hat{J}}(\mathbf{x}^N(s)) \right\| +\frac1N \sum_{i=1}^N \left\| v_i^{\hat{J}}({\mathbf{x}}^N(s)) - v_i^{\hat{J}}(\hat{\mathbf{x}}^N(s)) \right\|\Bigg]\diff s\nonumber\\
	&\leq T\left( \frac{1}{TN} \int_0^T \sum_{i=1}^N \left\| v_{i}^N(s) - v_i^{\hat{J}}(\mathbf{x}^N(s)) \right\|^2 \,\diff s \right)^{1/2} + L \int_0^t  \|\mathbf{x}^N(s)-\hat{\mathbf{x}}^N(s)\|_N\,\diff s\nonumber\\
	&\leq T  \sqrt{\energy_v^{N}(\hat{J})} + L \int_0^t \|\mathbf{x}^N(s)-\hat{\mathbf{x}}^N(s)\|_N\,\diff s\nonumber
	\end{align}
	where we used Jensen's inequality (or Cauchy-Schwarz) from the third to the fourth line and with $L = L(\hat{J}, e, \veps)$ denoting the Lipschitz constant of the velocities (cf. Lemma \ref{lemma:LipschitzVelocityMap}).
	An application of Gr\"onwall's inequality yields
	\begin{equation}\label{eq:TrajectoryBoundV}
	\|\mathbf{x}^N(t)-\hat{\mathbf{x}}^N(t)\|_N \leq Te^{LT} \cdot \sqrt{\energy_v^{N}(\hat{J})}.
	\end{equation}
	
	\noindent
	\emph{Part 2: discrete $\sigma$-error functional \eqref{eq:energyJNsigma}.}
	To obtain the same estimate for the discrete $\sigma$-error functional \eqref{eq:energyJNsigma} we first recall Pinsker's inequality (cf. \cite[Lemma 2.5]{TsybakovNonparametricEstimation}): for $\sigma_1,\sigma_2 \in \sigmaSet$ one has
	\[
	\|\sigma_1 - \sigma_2 \|_{L^1_\eta(U)} \leq \sqrt{2\KL(\sigma_1\mid\sigma_2)}.
	\]
	Thus, for any $t \in [0,T]$, we estimate
	\begin{align}
	&\left\| v_i^{N}(t) - v_i^{\hat{J}}({\mathbf{x}^N(t)}) \right\|^2 = \left\| \int_U e(x_i^N(t),u) \left(\sigma_i^N(t)(u) - \sigma_i^{\hat{J}}(\mathbf{x}^N(t))(u) \right)\,d\eta(u) \right\|^2 \nonumber\\
	& \leq \left( \int_U \|e(x_i^N(t),u)\| \left|\sigma_i^N(t)(u) - \sigma_i^{\hat{J}}(\mathbf{x}^N(t))(u) \right|\,d\eta(u) \right)^2 \nonumber\\
	& \leq \|e\|_\infty^2 \| \sigma_i^N(t) - \sigma_i^{\hat{J}}(\mathbf{x}^N(t)) \|_{L^1_\eta(U)}^2 \leq 2\|e\|^2_\infty \KL(\sigma_i^N(t) \mid \sigma_i^{\hat{J}}(\mathbf{x}^N(t)))\nonumber
	\end{align}
	This provides
	$
	\energy_v^{N}(\hat{J}) \leq 2\|e\|^2_\infty \energy_\sigma^{N}(\hat{J})
	$
	and the conclusion follows from \eqref{eq:TrajectoryBoundV}.
	
	\noindent
	\emph{Part 3: extension to continuous functionals.}
	Let $(\hat{\textbf{x}}^N)_N$ be the sequence of discrete solutions with initial conditions $(\textbf{x}^N(0))_N$ induced by $\hat{J}$ and $(\hat{\mu}^N)_N$ the corresponding sequence of empirical measures. By Assumption \ref{ass:Observations}, point \ref{item:ObservationsLimit}, we have $W_1(\mu^N(t),\mu^\infty(t)) \to 0$ uniformly for $t \in [0,T]$, in particular $W_1(\mu^N(0),\mu^\infty(0)) \to 0$. Hence, by Theorem \ref{thm:wellPosedFastReaction}, point \ref{item:QuasiStaticStability}, $W_1(\hat{\mu}^N(t),\hat{\mu}(t)) \leq C W_1(\mu^N(0),\mu^\infty(0))  \to 0$ as $N \to \infty$ uniformly for $t \in [0,T]$. Then
	\begin{align*}
	W_1(\mu^\infty(t),\hat{\mu}(t)) & = \lim_{N \to \infty} W_1(\mu^N(t),\hat{\mu}^N(t))
	\leq \liminf_{N \to \infty} \|\textbf{x}^N(t)-\hat{\textbf{x}}^N(t)\|_N \\
	& \leq \liminf_{N \to \infty} C \sqrt{\energy^{N}_v(\hat{J})}
	= C \sqrt{\energy_v(\hat{J})}
	\end{align*}
	where we applied the discrete result in the third step and used Lemma \ref{lemma:UniformConvergence}, for the constant sequence $(\hat{J})_N$, in the fourth step. The same argument covers the $\sigma$-functionals \eqref{eq:energysigma}.
\end{proof}

We close this section by proving convexity of the $\sigma$-based inference functionals $\energy^{N}_\sigma$ and $\energy_\sigma$.
\begin{proposition}[Convexity]
	\label{prop:energysigmaConvex}
	$\energy^{N}_\sigma$ and $\energy_\sigma$, \eqref{eq:energysigma}, are convex on $X$.
\end{proposition}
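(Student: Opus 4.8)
The plan is to expand the Kullback--Leibler divergence, isolate the only part that depends on the unknown payoff $J$, and reduce convexity of that part to the classical convexity of the log-partition functional. First I would fix a time $t$ and an agent index $i$, and use that both the observed strategy $\sigma_i^N(t)$ and the predicted strategy $\sigma_i^J(\mathbf{x}^N(t))$ are densities with respect to $\eta$ to write
\[
\KL\big(\sigma_i^N(t)\,\big|\,\sigma_i^J(\mathbf{x}^N(t))\big)
= \int_U \sigma_i^N(t)\log\sigma_i^N(t)\,\diff\eta
- \int_U \sigma_i^N(t)\,\log\sigma_i^J(\mathbf{x}^N(t))\,\diff\eta.
\]
The first summand is a constant in $J$, so only the cross-entropy term is relevant, and it suffices to establish its convexity in $J$.

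Next I would substitute the explicit Gibbs form \eqref{eq:QuasiStaticGameModelSigma}. Setting $g_J(u) \assign \tfrac{1}{\veps N}\sum_{j=1}^N J(x_i^N(t),u,x_j^N(t))$, which is \emph{linear} in $J$, one has $\log\sigma_i^J(\mathbf{x}^N(t))(u) = g_J(u) - \log\int_U \exp(g_J(v))\,\diff\eta(v)$, so that, using $\int_U \sigma_i^N(t)\,\diff\eta = 1$, the cross-entropy term equals
\[
-\int_U \sigma_i^N(t)(u)\,g_J(u)\,\diff\eta(u) + \log\!\int_U \exp\big(g_J(v)\big)\,\diff\eta(v).
\]
The first summand is linear, hence convex, in $J$. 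The crux is therefore the convexity of the second summand $J \mapsto \log\int_U \exp(g_J(v))\,\diff\eta(v)$. Since $J \mapsto g_J$ is linear, convexity follows once I show that the log-partition functional $\Phi(g) \assign \log\int_U e^{g}\,\diff\eta$ is convex on $C(U)$, and then precompose with this linear map.

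The only nontrivial step, which I expect to be the main (though standard) obstacle, is the convexity of $\Phi$. I would obtain it from Hölder's inequality: for $g_0,g_1 \in C(U)$ and $\theta \in [0,1]$, applying Hölder with conjugate exponents $1/(1-\theta)$ and $1/\theta$ yields
\[
\int_U e^{(1-\theta)g_0+\theta g_1}\,\diff\eta
\leq \left(\int_U e^{g_0}\,\diff\eta\right)^{1-\theta}\left(\int_U e^{g_1}\,\diff\eta\right)^{\theta},
\]
and taking logarithms gives $\Phi((1-\theta)g_0+\theta g_1) \leq (1-\theta)\Phi(g_0)+\theta\Phi(g_1)$. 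Consequently, for each fixed $i$ and $t$ the integrand $\KL(\sigma_i^N(t)\,|\,\sigma_i^J(\mathbf{x}^N(t)))$ is convex in $J$, being the sum of a constant, a linear term, and a convex term. Finally, convexity is preserved under the finite convex combination $\tfrac1N\sum_i$ and under integration against the positive measure $\tfrac1T\,\diff t$, so $\energy^N_\sigma$ is convex on $X$. For $\energy_\sigma$ the identical computation applies with $g_J(u) \assign \tfrac1\veps\int_{\R^d} J(x,u,x')\,\diff\mu^\infty(t)(x')$ replacing the discrete average and with the mean-field strategy \eqref{eq:MeanFieldSigma}; convexity of the pointwise integrand then survives integration against the positive measures $\diff\eta$, $\diff\mu^\infty(t)$ and $\tfrac1T\,\diff t$, giving convexity of $\energy_\sigma$.
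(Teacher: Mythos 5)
Your proposal is correct and follows essentially the same route as the paper's proof: both decompose the KL divergence into a $J$-independent entropy term, a term linear in $J$, and the log-partition term $\log\int_U \exp(g_J)\,\diff\eta$, establish convexity of the latter via H\"older's inequality applied to $\exp(\alpha S_1)\exp((1-\alpha)S_2)$, and conclude since summation over agents and integration in time (and against $\mu^\infty(t)$ in the mean-field case) preserve convexity. Your only cosmetic difference is factoring the argument through an abstract log-partition functional precomposed with the linear map $J \mapsto g_J$, which the paper carries out inline.
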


\begin{proof}
	We fix a time $t \in [0,T]$ and ignore the $t$-dependency for notational simplicity.
	Also recall that all mixed strategy densities in $\sigmaSet$ are strictly bounded away from zero, making quotients and logarithms in the following well-defined.
	For $\textbf{x} = (x_1,\dots,x_N)\in [\R^d]^N$ and any $i \in \{1,\dots,N\}$ we compute
	\begin{align*}
	& \KL\big(\sigma_i|\sigma_i^{\hat{J}}(x_1,\ldots,x_N)\big)
	= \int_U \log\left( \frac{\sigma_i(u)}{\sigma_i^{\hat{J}}(x_1,\ldots,x_N)(u)} \right)\sigma_i(u) \diff \eta(u)
	\\
	& = \int_U \left[
	\sigma_i(u)\log\sigma_i(u)
	- \sigma_i(u)\log\left(\sigma_i^{\hat{J}}(x_1,\ldots,x_N)(u)\right)
	\right]
	\diff \eta(u) \\
	& = \int_U \left[
	\sigma_i(u)\log\sigma_i(u)
	- \sigma_i(u)\log\left(\tfrac{\exp\left(\tfrac{1}{\veps N}\sum_{j=1}^N \hat{J}(x_i,u,x_j)\right)}
	{\int_U \exp\left(\tfrac{1}{\veps N}\sum_{j=1}^N \hat{J}(x_i,v,x_j)\right)\,\diff \eta(v)}\right)
	\right]
	\diff \eta(u) \\
	& = \log\left(
	\int_U \! \exp\!\left(\frac{1}{\veps N}\sum_{j=1}^N \hat{J}(x_i,v,x_j)\right)\! \diff \eta(v)
	\right)
	\!+\! \int_U \sigma_i(u)\! \left(
	\log\sigma_i(u) - \frac{1}{\veps N} \sum_{j=1}^N \hat{J}(x_i,u,x_j)
	\right)\!\diff \eta(u).
	\end{align*}
	Convexity of the first term follows by H\"older's inequality: let $\hat{J}_1, \hat{J}_2 \in X$ and set $S_1(v) \assign \frac{1}{\veps N}\sum_{j=1}^N \hat{J}_1(x_i,v,x_j)$ and $S_2(v) \assign \frac{1}{\veps N}\sum_{j=1}^N \hat{J}_2(x_i,v,x_j)$, for $\alpha \in (0,1)$ we have
	\begin{align*}
	& \log\left( \int_U \exp\left(\alpha S_1(v) + (1-\alpha)S_2(v)\right)\,\diff \eta(v) \right) = \log\left( \int_U \exp(\alpha S_1(v)) \cdot \exp((1-\alpha)S_2(v))\,\diff \eta(v) \right) \\
	&\leq \log\left( \left( \int_U \exp(\alpha S_1(v))^{1/\alpha}\,\diff\eta(v)\right)^\alpha \cdot \left( \int_U \exp((1-\alpha) S_2(v))^{1/(1-\alpha)}\,\diff\eta(v)\right)^{1-\alpha} \right) \\
	&= \alpha \log\left( \int_U \exp( S_1(v) )\,\diff \eta(v) \right) + (1-\alpha) \log\left( \int_U \exp( S_2(v) )\,\diff \eta(v) \right).
	\end{align*}
	The second term is constant and the last term is linear in $\hat{J}$, and so $\KL\big(\sigma_i|\sigma_i^{\hat{J}}(x_1,\ldots,x_N)\big)$ is convex in $\hat{J}$.
	Summing over $i$ (for \eqref{eq:energyJNsigma}) or integrating over $\mu^\infty(t)$ (for \eqref{eq:energyJsigma}) and then integrating in time preserves convexity and concludes the proof.
\end{proof}

\bibliography{references}{}
\bibliographystyle{plain}

\end{document}